\newtheorem{theorem}{Theorem}[section]
\newtheorem{corollary}{Corollary}[section]
\newtheorem{remark}{Remark}[section]
\newtheorem{conjecture}{Conjecture}[section]
\begin{document}
\title
{\bf   Using Lie Sphere Geometry to Study Dupin Hypersurfaces in ${\bf R}^n$}
\author
{Thomas E. Cecil}
\maketitle

\begin{abstract}
A hypersurface $M$ in ${\bf R}^n$ or $S^n$ is said to be {\em Dupin} if along
each curvature surface, the corresponding principal curvature is constant.  A Dupin
hypersurface is said to be {\em proper Dupin}
if each principal curvature has constant multiplicity on $M$, i.e., the number
of distinct principal curvatures is constant on $M$. 
The notions of Dupin and proper Dupin hypersurfaces in ${\bf R}^n$ or $S^n$ can be generalized
to the setting of Lie sphere geometry, and these properties are easily seen to be invariant under Lie sphere transformations.  This makes
Lie sphere geometry an effective setting for the study of Dupin hypersurfaces, and many classifications of proper Dupin hypersurfaces have been obtained up to Lie sphere transformations.
In these notes, we give a detailed introduction to this method for studying Dupin hypersurfaces in ${\bf R}^n$ or $S^n$,
including proofs of several fundamental results. 
 \end{abstract}

\section{Introduction}
\label{sec:1}
A hypersurface $M$ in ${\bf R}^n$ or $S^n$ is said to be {\em Dupin} if along
each curvature surface, the corresponding principal curvature is constant.  A Dupin
hypersurface is said to be {\em proper Dupin}
if each principal curvature has constant multiplicity on $M$, i.e., the number
of distinct principal curvatures is constant on $M$ (see Pinkall  \cite{P4}). 

The notions of Dupin and proper Dupin hypersurfaces in ${\bf R}^n$ or $S^n$ can be generalized
to the setting of Lie sphere geometry, and these properties are easily seen to be invariant under Lie sphere transformations (see Theorem \ref{thm:dupin-lie-invariant}).  This makes
Lie sphere geometry an effective setting for the study of Dupin hypersurfaces, and many classifications of proper Dupin hypersurfaces have been obtained up to Lie sphere transformations.

In these notes, we give a detailed introduction to the method for studying Dupin hypersurfaces in ${\bf R}^n$ or $S^n$ using Lie sphere geometry, including necessary and sufficient conditions for a Dupin hypersurface in $S^n$ to be equivalent to an isoparametric hypersurface in $S^n$ by a Lie sphere transformation (Theorem \ref{thm:3.5.6}).  As an application, we give a classification due to Pinkall \cite{P4} of the cyclides of Dupin in $S^n$ up to Lie sphere transformations (Theorem \ref{thm:4.3.1}).  
We also give a survey of results concerning compact proper Dupin hypersurfaces and their relationship to isoparametric hypersurfaces (see Section \ref{sec:12}).  

In 1872, Lie \cite{Lie} introduced his
geometry of oriented hyperspheres in Euclidean space ${\bf R}^n$ in the context of his work on contact
transformations (see \cite{LS}).  Lie established a bijective correspondence between the set of all {\em Lie spheres}, i.e.,
oriented hyperspheres, oriented hyperplanes and point spheres,
in ${\bf R}^n \cup \{\infty \}$, and the set of all points on the quadric hypersurface $Q^{n+1}$ in real projective space 
${\bf P}^{n+2}$ given by the equation $\langle x,x \rangle = 0$, where $\langle\  ,\  \rangle$ is
an indefinite scalar product with signature $(n+1,2)$ on ${\bf R}^{n+3}$ given by
\begin{equation}
\label{eq:lie-product}
\langle x,y \rangle = - x_1 y_1 + x_2 y_2 + \cdots + x_{n+2} y_{n+2} - x_{n+3} y_{n+3},
\end{equation} 
for $x = (x_1,\ldots, x_{n+3})$, $y = (y_1,\ldots, y_{n+3})$.

Using linear algebra, one can show that this {\em Lie quadric} $Q^{n+1}$ contains 
projective lines but no linear subspaces of ${\bf P}^{n+2}$ of higher dimension,
since the metric in equation (\ref{eq:lie-product}) has signature
$(n+1,2)$ (see \cite [p.  21]{Cec1}).  The one-parameter family of Lie spheres in
${\bf R}^n \cup \{\infty \}$ corresponding to the points on a line on $Q^{n+1}$ is called a 
{\em parabolic pencil} of spheres.
It consists of all Lie spheres in oriented contact at a certain contact element 
$(p,N)$ on ${\bf R}^n \cup \{\infty \}$, where $p$ is a point in ${\bf R}^n \cup \{\infty \}$ and $N$ is a unit
tangent vector to ${\bf R}^n \cup \{\infty \}$ at $p$.  That is, $(p,N)$ is an element of the unit tangent bundle 
of ${\bf R}^n \cup \{ \infty \}$.
In this way, Lie also established a bijective correspondence between the manifold of contact elements on 
${\bf R}^n \cup \{\infty \}$ and the manifold $\Lambda^{2n-1}$ of projective lines on the Lie quadric $Q^{n+1}$.

A {\em Lie sphere transformation} 
is a projective transformation of ${\bf P}^{n+2}$ which maps the Lie quadric $Q^{n+1}$
to itself.  In terms of the geometry of ${\bf R}^n$, a Lie sphere transformation maps Lie spheres to Lie spheres.
Furthermore, since a projective transformation maps lines to lines, a Lie sphere transformation preserves oriented contact of Lie spheres in ${\bf R}^n$.  

Let ${\bf R}^{n+3}_2$ denote ${\bf R}^{n+3}$ endowed with the 
metric $\langle \ ,\  \rangle$ in equation (\ref{eq:lie-product}), and let $O(n+1,2)$
denote the group of orthogonal transformations of ${\bf R}^{n+3}_2$.
One can show that every Lie sphere transformation is the projective transformation
induced by an orthogonal transformation, and
thus the group $G$ of Lie sphere transformations is isomorphic to the quotient group $O(n+1,2)/\{\pm I\}$
(see \cite[pp. 26--27]{Cec1}).
Furthermore, any M\"{o}bius (conformal) 
transformation of ${\bf R}^n \cup \{\infty \}$ induces a
Lie sphere transformation, and the M\"{o}bius group 
is precisely the subgroup of $G$ consisting of all Lie sphere transformations
that map point spheres to point spheres.  

The manifold $\Lambda^{2n-1}$ of projective lines on the quadric $Q^{n+1}$ has a 
contact structure, i.e., a
$1$-form $\omega$ such that $\omega \wedge (d\omega)^{n-1}$ does not vanish on $\Lambda^{2n-1}$.  The condition $\omega = 0$ defines a codimension one distribution $D$ on $\Lambda^{2n-1}$ which has 
integral submanifolds
of dimension $n-1$, but none of higher dimension.  Such an integral  submanifold 
$\lambda: M^{n-1} \rightarrow \Lambda^{2n-1}$ of $D$ of maximal dimension is called a 
{\em Legendre submanifold}.
If $\alpha$ is a Lie sphere transformation, then $\alpha$ maps lines on $Q^{n+1}$ to lines on $Q^{n+1}$, and
the map $\mu = \alpha \lambda$ is also a Legendre submanifold. The
submanifolds $\lambda$ and $\mu$ are said to be {\em Lie equivalent}.

Let $M^{n-1}$ be an oriented hypersurface in ${\bf R}^n$. Then $M^{n-1}$ naturally induces a Legendre submanifold $\lambda: M^{n-1} \rightarrow \Lambda^{2n-1}$, called the {\em Legendre lift of}
$M^{n-1}$, as we will show in these notes.  More generally, an immersed submanifold
$V$ of codimension greater than one in ${\bf R}^n$
induces a Legendre lift whose domain is the unit normal
bundle $B^{n-1}$ of $V$ in ${\bf R}^n$.  Similarly, any submanifold of the unit sphere $S^n \subset {\bf R}^{n+1}$ has a Legendre lift.
We can relate properties of  a submanifold of ${\bf R}^n$ or $S^n$ to Lie geometric properties of its Legendre lift,
and attempt to classify certain types of Legendre submanifolds up to Lie sphere transformations.
This, in turn, gives classification results for the corresponding classes of Euclidean submanifolds of ${\bf R}^n$
or $S^n$.

We next recall some basic ideas from Euclidean submanifold theory that are necessary for the study of
Dupin hypersurfaces.
For an oriented hypersurface $f:M\rightarrow {\bf R}^n$ with field of unit normal vectors $\xi$,
 the eigenvalues of the shape operator
(second fundamental form) $A$ of $M$ are called {\em principal curvatures},
and their corresponding eigenspaces
are called {\em principal spaces}.  
A submanifold $S$ of $M$ is called a {\em curvature surface} of $M$
if at each
point $x$ of $S$, the tangent space $T_xS$ is a principal space at $x$.  This generalizes the classical notion of a 
line of curvature
of a surface in ${\bf R}^3$.  
If $\kappa$ is a non-zero principal curvature of $M$ at $x$, the point 
\begin{equation}
\label{eq:focal-point}
f_\kappa (x) = f(x) + (1/\kappa) \xi (x)
\end{equation} 
is called the {\em focal point} of $M$ at $x$ determined by $\kappa$.  The 
hypersphere in ${\bf R}^n$ tangent to $M$ at $f(x)$ and centered at the focal point $f_\kappa (x)$ is called the 
{\em curvature sphere} at $x$ 
determined by $\kappa$.

It is well known that there always exists an open
dense subset $\Omega$ of $M$ on which the multiplicities of the principal curvatures are locally 
constant (see, for example, Singley \cite{Sin}).
If a principal curvature $\kappa$ has constant multiplicity $m$
in some open set $U \subset M$, then the corresponding $m$-dimensional
distribution of principal spaces is integrable, i.e., it is
an $m$-dimensional
foliation, and the leaves of this {\em principal foliation}
are curvature surfaces.  Furthermore, if the
multiplicity $m$ of $\kappa$ is greater than one, then by using the Codazzi equation, 
one can show that $\kappa$ is constant along each leaf of this principal
foliation (see, for example, \cite[p. 24]{CR8}).  This is not true, in general, if the multiplicity $m=1$.  
Analogues of these results hold for oriented hypersurfaces in the sphere $S^n$ or in real hyperbolic space $H^n$
(see, for example, \cite[pp. 9--35]{CR8}).

As mentioned earlier, a hypersurface $M$ in ${\bf R}^n$ or $S^n$ is said to be Dupin if along
each curvature surface, the corresponding principal curvature is constant.  A Dupin
hypersurface is said to be proper Dupin
if each principal curvature has constant multiplicity on $M$, i.e., the number
of distinct principal curvatures is constant on $M$ (see Pinkall  \cite{P4}). 

The notions of Dupin and proper Dupin hypersurfaces in ${\bf R}^n$ or $S^n$ can be generalized
to a class of Legendre submanifolds in Lie sphere geometry known as Dupin submanifolds
(see Section \ref{sec:7a}).
In particular, the Legendre lifts of Dupin hypersurfaces in ${\bf R}^n$ or $S^n$  are Dupin submanifolds in 
this generalized sense.
The Dupin and proper Dupin properties of such submanifolds are easily seen to be invariant under Lie sphere transformations (see Theorem \ref{thm:dupin-lie-invariant}).  

A well-known class of proper Dupin hypersurfaces consists of the cylides of Dupin in ${\bf R}^3$,
introduced by Dupin \cite{D} in 1822 (see Section \ref{sec:10}).   Dupin defined a cyclide 
to be a surface $M$ in ${\bf R}^3$ that is the envelope
of the family of spheres tangent to three fixed spheres in ${\bf R}^3$.  This is equivalent to requiring that $M$
have two distinct principal curvatures at each point, and that both focal maps of $M$ degenerate into 
curves (instead of surfaces).  
Then $M$ is the envelope of the family of curvature spheres centered along each of the focal curves.  The three fixed spheres in Dupin's definition can be chosen to be three spheres from either family of curvature spheres.

The most basic examples of cyclides of Dupin in ${\bf R}^3$ are a torus of revolution, a circular cylinder, and a
circular cone.  The proper Dupin property is easily shown to be invariant under M\"{o}bius transformations 
of ${\bf R}^3 \cup \{\infty \}$, and
it turns out that all cyclides of Dupin in ${\bf R}^3$ can be obtained from these three types of examples by inversion in a sphere in ${\bf R}^3$ (see, for example, \cite[pp. 151--166]{CR7}).  

Pinkall's paper \cite{P4} describing 
higher dimensional cyclides of Dupin
in the context of Lie sphere geometry was particularly influential.
Pinkall defined a {\em cyclide of Dupin of characteristic} $(p,q)$ to be
a proper Dupin submanifold $\lambda: M^{n-1} \rightarrow \Lambda^{2n-1}$ with two distinct curvature spheres
of respective multiplicities $p$ and $q$ at each point.
In Section \ref{sec:10} of these notes, 
we present Pinkall's \cite{P4} classification of the cyclides of Dupin 
of arbitrary dimension in ${\bf R}^n$ or $S^n$ (Theorem \ref{thm:4.3.1}), which is
obtained by using the methods of Lie sphere geometry.
 
Specifically, we show in Theorem \ref{thm:4.3.1} that any connected cyclide of Dupin of
characteristic $(p,q)$ is contained in a unique compact, connected cyclide of Dupin of
characteristic $(p,q)$.  
Furthermore, every compact, connected cyclide of Dupin of characteristic $(p,q)$ is Lie equivalent to the Legendre lift of a standard product of two spheres,
\begin{equation}
\label{eq:std-prod-p-q-1}
S^q (1/\sqrt{2}) \times S^p (1/\sqrt{2}) \subset S^n \subset {\bf R}^{q+1} \times {\bf R}^{p+1} = {\bf R}^{n+1}. 
\end{equation} 

A  standard product of two spheres in $S^n$ is an isoparametric hypersurface in $S^n$, i.e., it has
constant principal curvatures in $S^n$.  The images of isoparametric hypersurfaces in $S^n$ under stereographic projection from $S^n$ to ${\bf R}^n$ form a particularly important class of
proper Dupin hypersurfaces in ${\bf R}^n$.

Many results in the field deal with relationships between compact proper Dupin hypersurfaces 
and isoparametric hypersurfaces in spheres, including the question of which compact proper Dupin hypersurfaces are 
Lie equivalent to an isoparametric hypersurface in a sphere.  An important result in proving such
classifications is Theorem \ref{thm:3.5.6} of these notes, 
which gives for necessary and sufficient conditions for a Legendre
submanifold to be Lie equivalent to the Legendre lift of an isoparametric hypersurface in $S^n$.

Local examples of proper Dupin hypersurfaces in ${\bf R}^n$ or $S^n$ are known to be plentiful, since  Pinkall 
\cite{P4} introduced four constructions
for obtaining a proper Dupin hypersurface
$W$ in ${\bf R}^{n+m}$ from a proper Dupin hypersurface
$M$ in ${\bf R}^n$.  These constructions involve building tubes, cylinders, 
cones and surfaces of revolution
from $M$, and they are discussed in Section \ref{sec:11}. Using these constructions, Pinkall
was able to construct a proper Dupin hypersurface in Euclidean space with an arbitrary number of distinct
principal curvatures with any given multiplicities (see Theorem \ref{thm:4.1.1}).  
In general, these proper Dupin hypersurfaces obtained by using Pinkall's constructions cannot be
extended to compact Dupin hypersurfaces without losing the property that the number of distinct principal
curvatures is constant, as we discuss in Section \ref{sec:11}.

Proper Dupin hypersurfaces that are locally Lie equivalent to the end product of one of
Pinkall's constructions are said to be {\em reducible}. 
Pinkall \cite{P4} found a useful characterization of reducibility in the context of Lie sphere 
geometry, which we state and prove in Theorem \ref{thm:4.2.9}.  This theorem has been important in proving
several classifications of irreducible proper Dupin hypersurfaces that are described in Section \ref{sec:12}.

In Section \ref{sec:12}, we give a survey of results concerning compact and irreducible proper Dupin hypersurfaces, and their relationship to isoparametric hypersurfaces.  Cecil, Chi and Jensen \cite{CCJ2} showed that 
every compact proper Dupin hypersurface with more than two principal curvatures is irreducible (Theorem
\ref{thm:CCJ-2007}).
In fact, several classifications of compact proper Dupin hypersurfaces with $g \geq 3$
principal curvatures have been obtained by assuming that the hypersurface is irreducible and 
working locally in the context of Lie sphere geometry using the method of moving frames.  
(See, for example, the papers of Pinkall \cite{P1}, \cite{P3}--\cite{P4}, Cecil and Chern \cite{CC2}, Cecil and Jensen \cite{CJ2}--\cite{CJ3}, Cecil, Chi and Jensen \cite{CCJ2}, and Niebergall \cite{N1}--\cite{N2}.)  
See also \cite{Cec10} for a survey of
classifications of compact proper Dupin hypersurfaces.

These notes are based primarily on the author's book \cite{Cec1}, and 
several passages in these notes are taken directly from that book.

\section{Notation and Preliminary Results}
\label{sec:2}
In the next few sections, we review the basic setup for the sphere geometries of M\"{o}bius and Lie, and the method for studying submanifolds of ${\bf R}^n$ and $S^n$ in this context.

Let $(x,y)$ be the indefinite scalar product on the Lorentz space
${\bf R}^{n+1}_1$ defined by
\begin{equation}
\label{eq:1.1.1}
(x,y) = - x_1y_1 + x_2y_2 + \cdots + x_{n+1}y_{n+1},
\end{equation}
where $x = (x_1,\ldots,x_{n+1})$ and $y = (y_1,\ldots,y_{n+1})$.  We will call
this scalar product the {\em Lorentz metric}.
A vector $x$ is said to
be {\em spacelike}, {\em timelike} or {\em lightlike}, respectively,
depending on whether $(x,x)$ is positive, negative or zero.  We
will use this terminology even when we are using an indefinite metric of
different signature.

In Lorentz space, the set of all lightlike
vectors, given by the equation,
\begin{equation}
\label{eq:1.1.2}
x_1^2 = x_2^2 + \cdots + x_{n+1}^2,
\end{equation}
forms a cone of revolution, called the 
{\em light cone}.   Timelike vectors are
``inside the cone'' and spacelike vectors are ``outside the cone.''

If $x$ is a nonzero vector in ${\bf R}^{n+1}_1$, let $x^{\perp}$ 
denote the orthogonal complement of $x$ with respect to the Lorentz metric.
If $x$ is timelike, then the metric restricts to a positive definite
form on $x^{\perp}$, and $x^{\perp}$ intersects the light cone only at
the origin.  If $x$ is spacelike, then the metric has signature $(n-1,1)$
on $x^{\perp}$, and $x^{\perp}$ intersects the cone in a cone of
one less dimension.  If $x$ is lightlike, then $x^{\perp}$ is tangent to
the cone along the line through the origin determined by $x$.  The
metric has signature $(n-1,0)$ on this $n$-dimensional plane.

Lie sphere geometry is defined in the context of real projective space ${\bf P}^n$,
so we now briefly review some basic concepts from projective geometry.
We define an equivalence relation on ${\bf R}^{n+1} - \{0\}$ by setting
$x \simeq y$ if $x = ty$ for some nonzero real number $t$.  We denote
the equivalence class determined by a vector $x$ by $[x]$.  Projective
space ${\bf P}^n$ is the set of such equivalence classes, and it can
naturally be identified with the space of all lines through the origin
in ${\bf R}^{n+1}$.  The rectangular coordinates $(x_1, \ldots, x_{n+1})$
are called {\em homogeneous coordinates}
of the point $[x]$, and they
are only determined up to a nonzero scalar multiple.  The affine space
${\bf R}^n$ can be embedded in ${\bf P}^n$ as the complement of the hyperplane
$(x_1 = 0)$ at infinity by the map $\phi: {\bf R}^n \rightarrow
{\bf P}^n$ given by $\phi (u) = [(1,u)]$.  A scalar product on 
${\bf R}^{n+1}$, such as the Lorentz metric, determines a polar
relationship between points and hyperplanes in ${\bf P}^n$.  We
will also use the notation $x^{\perp}$ to denote the polar hyperplane
of $[x]$ in ${\bf P}^n$, and we will call $[x]$ the {\em pole} of $x^{\perp}$.

If $x$ is a non-zero lightlike vector in ${\bf R}^{n+1}_1$, then $[x]$ can be represented
by a vector of the form $(1,u)$ for $u \in {\bf R}^n$.  Then the equation
$(x,x) = 0$ for the light cone becomes $u \cdot u = 1$ (Euclidean dot product),
i.e., the equation for the unit sphere in  ${\bf R}^n$.  Hence, the set of 
points in ${\bf P}^n$ determined by lightlike vectors in ${\bf R}^{n+1}_1$
is naturally diffeomorphic to the sphere $S^{n-1}$.

\section{M\"{o}bius Geometry of Unoriented Spheres}
\label{sec:3}
As a first step toward Lie's geometry of oriented spheres, we recall the geometry of
unoriented spheres in ${\bf R}^n$ known as ``M\"{o}bius'' or
``conformal'' geometry.  We will always assume that $n \geq 2.$
In this section, we will only consider spheres and planes
of codimension one, and we will often omit the prefix ``hyper,'' from the words
``hypersphere'' and ``hyperplane.'' (See \cite[pp. 11--14]{Cec1} for more detail.)

We denote the Euclidean dot product of two vectors $u$ and $v$ in
${\bf R}^n$ by $u \cdot v$.  We first consider stereographic
projection $\sigma : {\bf R}^n \rightarrow S^n - \{P\}$, where
$S^n$ is the unit sphere in ${\bf R}^{n+1}$ given by $y \cdot y = 1$,
and $P = (-1,0,\ldots,0)$ is the south pole
of $S^n$.
The well-known formula for $\sigma (u)$ is
\begin{displaymath}
\sigma (u) = \left(\frac{1-u \cdot u}{1+ u \cdot u}, \frac{2u}{1+ u \cdot u}
\right).
\end{displaymath}
Note that $\sigma$ is sometimes referred to as ``inverse stereographic projection,'' in which case its inverse map 
from $S^n - \{P\}$ to ${\bf R}^n$ is called ``stereographic projection.''

We next embed ${\bf R}^{n+1}$ into ${\bf P}^{n+1}$ by the embedding $\phi$
mentioned in the previous section.  Thus, we have the map 
$\phi \sigma : {\bf R}^n \rightarrow {\bf P}^{n+1}$ given by
\begin{equation}
\label{eq:1.2.1}
\phi \sigma (u) = \left[ \left( 1,\frac{1-u \cdot u}{1+ u \cdot u}, \frac{2u}{1+ u \cdot u}
\right) \right] = \left[ \left( \frac{1+u \cdot u}{2},\frac{1-u \cdot u}{2},u \right) \right].
\end{equation}
Let $(z_1,\ldots,z_{n+2})$ be homogeneous coordinates on ${\bf P}^{n+1}$
and $(\ ,\ )$ the Lorentz metric on the space ${\bf R}^{n+2}_1$.  Then
$\phi \sigma ({\bf R}^n)$ is just the set of points in ${\bf P}^{n+1}$
lying on the $n$-sphere $\Sigma$ given by the equation $(z,z) = 0$,
with the exception of the {\em improper point} $[(1,-1,0,\ldots,0)]$
corresponding to the south pole $P$.  We will refer to 
the points in $\Sigma$ other than $[(1,-1,0,\ldots,0)]$ as
{\em proper points}, and will call $\Sigma$ the 
{\em M\"{o}bius sphere} or {\em M\"{o}bius space}.  At times, it
is easier to simply begin with $S^n$ rather than ${\bf R}^n$ and
thus avoid the need for the map $\sigma$ and the special
point $P$.  However, there are also advantages for beginning in ${\bf R}^n$.

The basic framework for the M\"{o}bius geometry of unoriented spheres
is as follows.  Suppose that $\xi$ is a spacelike vector in ${\bf R}^{n+2}_1$.
Then the polar hyperplane $\xi^{\perp}$ to $[\xi]$ in ${\bf P}^{n+1}$
intersects the sphere $\Sigma$ in an $(n-1)$-sphere $S^{n-1}$.  The sphere
$S^{n-1}$ is the image under $\phi \sigma$ of an $(n-1)$-sphere in ${\bf R}^n$,
unless it contains the improper point $[(1,-1,0,\ldots,0)]$, in which case
it is the image under $\phi \sigma$ of a hyperplane in ${\bf R}^n$.  Hence,
we have a bijective correspondence between the set of all spacelike points in 
${\bf P}^{n+1}$ and the set of all hyperspheres and hyperplanes in ${\bf R}^n$.

It is often useful to have specific formulas for this correspondence.
Consider the sphere in ${\bf R}^n$ with center $p$ and radius $r>0$
given by the equation
\begin{equation}
\label{eq:1.2.2}
(u-p) \cdot (u-p) = r^2.
\end{equation}
We wish to translate this into an equation involving the Lorentz metric
and the corresponding polarity relationship on ${\bf P}^{n+1}$.  A direct calculation
shows that equation (\ref{eq:1.2.2}) is equivalent to the equation
\begin{equation}
\label{eq:1.2.3}
(\xi,\phi \sigma (u)) = 0,
\end{equation}
where $\xi$ is the spacelike vector,
\begin{equation}
\label{eq:1.2.4}
\xi = \left( \frac{1+p \cdot p - r^2}{2}, \frac{1- p \cdot p +r^2}{2},p \right),
\end{equation}
and $\phi \sigma (u)$ is given by equation (\ref{eq:1.2.1}).  Thus, the point $u$ is on the sphere
given by equation (\ref{eq:1.2.2}) if and only if $\phi \sigma (u)$ lies on the polar
hyperplane of $[\xi]$.  Note that the first two coordinates
of $\xi$ satisfy $\xi_1 + \xi_2 = 1$, and that
$(\xi,\xi) = r^2$.  Although $\xi$ is only determined
up to a nonzero scalar multiple, we can conclude that 
$\eta_1 + \eta_2$ is not zero for any $\eta \simeq \xi$.

Conversely, given a spacelike point $[z]$ with $z_1 + z_2$ nonzero,
we can determine the corresponding sphere in ${\bf R}^n$ as follows.
Let $\xi = z/(z_1 + z_2)$ so that $\xi_1 + \xi_2 = 1$. Then from equation
(\ref{eq:1.2.4}), the center of the corresponding sphere is the point
$p = (\xi_3,\ldots,\xi_{n+2})$, and the radius is the square root of
$(\xi,\xi)$.

Next suppose that $\eta$ is a spacelike vector with $\eta_1 + \eta_2 =0$.
Then
\begin{displaymath}
(\eta, (1,-1,0,\ldots,0)) = 0.
\end{displaymath}
Thus, the improper point $\phi (P)$ lies on the polar hyperplane of $[\eta ]$,
and the point $[\eta ]$ corresponds to a hyperplane in ${\bf R}^n$.
Again we can find an explicit correspondence.  Consider the hyperplane in
${\bf R}^n$ given by the equation
\begin{equation}
\label{eq:1.2.5}
u \cdot N = h, \quad |N| = 1.
\end{equation}
A direct calculation shows that (\ref{eq:1.2.5}) is equivalent to the equation
\begin{equation}
\label{eq:1.2.6}
(\eta, \phi \sigma (u)) = 0, {\rm where} \ \eta = (h, -h, N).
\end{equation}
Thus, the hyperplane (\ref{eq:1.2.5}) is represented in the polarity relationship
by $[\eta ]$.  

Conversely, let $z$ be a spacelike point with
$z_1 + z_2 = 0$.  Then $(z,z) = v \cdot v$, where $v = (z_3,\ldots,z_{n+2})$.
Let $\eta = z / |v|$.  Then $\eta$ has the form (\ref{eq:1.2.6}) and
$[z]$ corresponds to the hyperplane (\ref{eq:1.2.5}).  Thus we have explicit formulas for
the bijective correspondence between the set of spacelike points in ${\bf P}^{n+1}$
and the set of hyperspheres and hyperplanes in ${\bf R}^n$.

Similarly, we can construct a bijective correspondence between the space of all hyperspheres in the unit sphere
$S^n \subset {\bf R}^{n+1}$ and the manifold of all spacelike points in ${\bf P}^{n+1}$ as follows.  The hypersphere $S$ in $S^n$ with center $p \in S^n$
and (spherical) radius $\rho, 0 < \rho < \pi$, is given by the equation 
\begin{equation}
\label{eq:1.4.1}
p \cdot y = \cos \rho, \quad 0 < \rho < \pi,
\end{equation}
for $y \in S^n$.  If we take $[z] = \phi(y) = [(1,y)]$, then
\begin{displaymath}
p \cdot y = \frac{-(z,(0,p))}{(z,e_1)},
\end{displaymath}
where $e_1 = (1,0,\ldots,0)$.
Thus equation (\ref{eq:1.4.1}) is equivalent to the equation
\begin{equation}
\label{eq:1.4.2}
(z,(\cos \rho,p)) = 0,
\end{equation}
in homogeneous coordinates in ${\bf P}^{n+1}$. Therefore,
$y$ lies on the hypersphere $S$ given by equation (\ref{eq:1.4.1}) if and only if $[z] = \phi (y)$
lies on the polar hyperplane in ${\bf P}^{n+1}$ of the spacelike point 
\begin{equation}
\label{eq:1.4.3}
[\xi] = [(\cos \rho,p)].
\end{equation}

\begin{remark}
\label{spheres-hyperbolic-space}
{\rm In these notes, we will focus on spheres in ${\bf R}^n$  or $S^n$.
See \cite[pp. 16--18]{Cec1} for a treatment of the geometry of hyperspheres in real hyperbolic space $H^n$.}
\end{remark}

Of course, the fundamental invariant of M\"{o}bius geometry is the angle.
The study of angles in this setting is quite natural, since orthogonality 
between spheres and planes in ${\bf R}^n$ can be expressed in terms of the Lorentz
metric.  Let $S_1$ and $S_2$ denote the spheres in ${\bf R}^n$ with respective
centers $p_1$ and $p_2$ and respective radii $r_1$ and $r_2$.  By the
Pythagorean Theorem, the two spheres intersect orthogonally if and only if
\begin{equation}
\label{eq:1.2.7}
|p_1 - p_2|^2 = r_1^2 + r_2^2.
\end{equation}
If these spheres correspond by equation (\ref{eq:1.2.4}) to the projective points
$[\xi_1]$ and $[\xi_2]$, respectively, then a calculation shows that equation
(\ref{eq:1.2.7}) is equivalent to the condition
\begin{equation}
\label{eq:1.2.8}
(\xi_1,\xi_2) = 0.
\end{equation}

A hyperplane $\pi$ in ${\bf R}^n$ is orthogonal to a hypersphere $S$ precisely
when $\pi$ passes through the center of $S$.  If $S$ has center $p$ and
radius $r$, and $\pi$ is given by the equation $u \cdot N = h$, then the
condition for orthogonality is just $p \cdot N = h$.  If $S$ corresponds
to $[\xi]$ as in (\ref{eq:1.2.4}) and $\pi$ corresponds to $[\eta]$ as in
(\ref{eq:1.2.6}), then this equation for orthogonality is equivalent to
$(\xi, \eta) = 0$.  Finally, if two planes $\pi_1$ and $\pi_2$ are represented
by $[\eta_1]$ and $[\eta_2]$ as in (\ref{eq:1.2.6}), then the orthogonality
condition $N_1 \cdot N_2 = 0$ is equivalent to the equation
$(\eta_1, \eta_2) = 0$.  Thus, in all cases of
hyperspheres or hyperplanes in ${\bf R}^n$, orthogonal intersection corresponds to a polar relationship in
${\bf P}^{n+1}$ given by equations (\ref{eq:1.2.3}) or (\ref{eq:1.2.6}).

We conclude this section with a discussion of M\"{o}bius transformations. Recall that a linear transformation $A \in GL(n+2)$ induces a projective transformation $P(A)$ on ${\bf P}^{n+1}$ defined by $P(A)[x] = [Ax]$.
The map $P$ is a homomorphism of $GL(n+2)$ onto the group $PGL(n+1)$ of projective transformations of 
${\bf P}^{n+1}$, and its kernel is the group of nonzero multiples of the identity transformation $I \in GL(n+2)$.

A {\em M\"{o}bius transformation} is a projective transformation $\alpha$ of ${\bf P}^{n+1}$ that preserves the
condition $(\eta, \eta) = 0$ for $[\eta] \in {\bf P}^{n+1}$, that is, $\alpha = P(A)$, where $A \in GL(n+2)$
maps lightlike vectors in ${\bf R}^{n+2}_1$ to lightlike vectors.  It can be shown
(see, for example, \cite[pp. 26--27]{Cec1}) that such a linear transformation $A$ is a nonzero scalar multiple of
a linear transformation $B \in O(n+1,1)$, the orthogonal group for the Lorentz inner product 
space ${\bf R}^{n+2}_1$.
Thus, $\alpha = P(A) = P(B)$.

The M\"{o}bius transformation $\alpha = P(B)$ induced by an orthogonal transformation $B \in O(n+1,1)$ maps
spacelike points to spacelike points in ${\bf P}^{n+1}$, and it preserves the polarity condition
$(\xi, \eta) = 0$ for any two points $[\xi]$ and $[\eta]$ in ${\bf P}^{n+1}$.  Therefore by the
correspondence given in equations (\ref{eq:1.2.3}) and (\ref{eq:1.2.6}) above, $\alpha$ maps the set of hyperspheres 
and hyperplanes in ${\bf R}^n$ to itself, and it preserves orthogonality and hence angles between hyperspheres
and hyperplanes.  A similar statement holds for the set of all hyperspheres in $S^n$.

Let $H$ denote the group of M\"{o}bius transformations and let 
\begin{equation}
\label{eq:defn-psi}
\psi: O(n+1,1) \rightarrow H
\end{equation}
be the restriction
of the map $P$ to $O(n+1,1)$.  The discussion above shows that $\psi$ is onto, and the kernel of $\psi$ is
$\{\pm I \}$, the intersection of $O(n+1,1)$ with the kernel of $P$.  Therefore, $H$ is isomorphic to the quotient
group $O(n+1,1)/ \{\pm I \}$.  

One can show that the group $H$ is generated by M\"{o}bius transformations 
induced by inversions in spheres in ${\bf R}^n$. 
This follows from the fact that the corresponding orthogonal
groups are generated by reflections 
in hyperplanes.  In fact, every orthogonal transformation on an indefinite inner product space ${\bf R}^n_k$ is
a product of at most $n$ reflections, a result due to Cartan and Dieudonn\'{e}.
(See Cartan \cite[pp. 10--12]{Car6}, Chapter 3 of Artin's book \cite{Artin}, or \cite[pp. 30--34]{Cec1}).

Since a M\"{o}bius transformation $\alpha = P(B)$ for $B \in O(n+1,1)$ maps lightlike points to lightlike
points in ${\bf P}^{n+1}$ in a bijective way, it induces a diffeomorphism of the $n$-sphere $\Sigma$ which is
conformal by the considerations given above.  It is well known that the group of conformal diffeomorphisms of
the $n$-sphere is precisely the M\"{o}bius group. 

\section{Lie Geometry of Oriented Spheres}
\label{sec:4}
We now turn to the construction of Lie's geometry of 
oriented spheres in ${\bf R}^n$.
Let $W^{n+1}$ be the set of vectors in ${\bf R}^{n+2}_1$ satisfying $(\zeta, \zeta) = 1.$  This
is a hyperboloid of revolution of one sheet in ${\bf R}^{n+2}_1$.  If $\alpha$ is a spacelike
point in ${\bf P}^{n+1}$, then there are precisely two vectors $\pm \zeta$ in $W^{n+1}$ with
$\alpha = [\zeta]$.  These two vectors can be taken to correspond to the two orientations
of the oriented sphere or plane represented by $\alpha$, as we will now describe.  We first introduce one
more coordinate.  We embed ${\bf R}^{n+2}_1$ into ${\bf P}^{n+2}$ by the embedding
$z \mapsto [(z,1)]$.  If $\zeta \in W^{n+1}$, then
\begin{displaymath}
- \zeta_1^2 + \zeta_2^2 + \cdots + \zeta_{n+2}^2 = 1,
\end{displaymath}
so the point $[(\zeta, 1)]$ in ${\bf P}^{n+2}$ lies on the quadric $Q^{n+1}$ in ${\bf P}^{n+2}$
given in homogeneous coordinates by the equation
\begin{equation}
\label{eq:1.3.1}
\langle x,x \rangle = - x_1^2 + x_2^2 + \cdots + x_{n+2}^2 - x_{n+3}^2 = 0.
\end{equation}
The manifold $Q^{n+1}$ is called the {\em Lie quadric}, and the scalar product
determined by the quadratic form in (\ref{eq:1.3.1}) is called the {\em Lie metric}
or {\em Lie scalar product}. 
We will let $\{e_1,\ldots,e_{n+3}\}$ denote the standard orthonormal
basis for the scalar product space ${\bf R}^{n+3}_2$ with metric $\langle \  ,  \ \rangle$.  Here $e_1$
and $e_{n+3}$ are timelike and the rest are spacelike.

We shall now see how points on $Q^{n+1}$ correspond to the set of oriented hyperspheres,
oriented hyperplanes
and point spheres in ${\bf R}^n \cup \{ \infty\}$.  Suppose that $x$ is any point on the 
quadric with homogeneous coordinate $x_{n+3} \neq 0$.  Then $x$ can be represented by a vector of the form
$(\zeta, 1)$, where the Lorentz scalar product $(\zeta, \zeta) = 1$.  Suppose first that
$\zeta_1 + \zeta_2 \neq 0$.  Then in M\"{o}bius geometry, $[\zeta]$ represents a sphere in ${\bf R}^n$.  If as in
equation (\ref{eq:1.2.4}), we represent $[\zeta]$ by a vector of the form
\begin{displaymath}
\xi = \left( \frac{1+p \cdot p - r^2}{2}, \frac{1- p \cdot p +r^2}{2},p \right),
\end{displaymath}
then $(\xi, \xi) = r^2$.  Thus $\zeta$ must be one of the vectors $\pm \xi/r$.  In ${\bf P}^{n+2}$, we have
\begin{displaymath}
[(\zeta, 1)] = [(\pm \xi / r, 1)] = [(\xi, \pm r)].
\end{displaymath}
We can interpret the last coordinate as a signed radius of the sphere with center $p$ and unsigned
radius $r > 0$.  In order to interpret this geometrically, we adopt the convention that
a positive signed radius 
corresponds to the orientation of the sphere
represented by the inward field
of unit normals, and a negative signed radius corresponds to the orientation given by the 
outward field of unit
normals.  Hence, the two orientations of the sphere in ${\bf R}^n$ with center $p$ and unsigned radius $r > 0$ are
represented by the two projective points,
\begin{equation}
\label{eq:1.3.2}
\left[ \left( \frac{1+p \cdot p - r^2}{2}, \frac{1- p \cdot p +r^2}{2},p, \pm r \right) \right]
\end{equation}
in $Q^{n+1}$.  Next if $\zeta_1 + \zeta_2 = 0$, then $[\zeta]$ represents a 
hyperplane in ${\bf R}^n$,
as in equation (\ref{eq:1.2.6}).  For $\zeta = (h,-h, N)$, with $|N| = 1$, we have $(\zeta, \zeta) = 1$.  Then the two
projective points on $Q^{n+1}$ induced by $\zeta$ and $- \zeta$ are
\begin{equation}
\label{eq:1.3.3}
[(h, -h, N, \pm 1)].
\end{equation}
These represent the two orientations of the plane with equation $u \cdot N = h$.  We make the
convention that $[(h, -h, N, 1)]$ corresponds to the orientation given by the field of unit
normals $N$, while the orientation given by $-N$ corresponds to the point
$[(h, -h, N, -1)] = [(-h, h, -N, 1)]$.

Thus far we have determined a bijective correspondence between the set of points $x$ in $Q^{n+1}$ with
$x_{n+3} \neq 0$ and the set of all oriented spheres
and planes
in ${\bf R}^n$.  Suppose now that
$x_{n+3} = 0$, i.e., consider a point $[(z,0)]$, for $z \in {\bf R}^{n+2}_1$.  Then
$\langle x, x \rangle = (z, z) = 0$, and $[z] \in {\bf P}^{n+1}$ is simply a point of the 
M\"{o}bius sphere $\Sigma$.  Thus we have the following bijective correspondence between objects
in Euclidean space and points on the Lie quadric: 

\begin{equation}
\label{eq:1.3.4}
\begin{array}{cc}
{\rm {\bf Euclidean}} & {\rm {\bf Lie}} \\
 & \\
\mbox{{\rm points:} }u \in {\bf R}^n & \left[ \left( \frac{1+u \cdot u}{2},\frac{1-u \cdot u}{2},u,0 \right) 
\right]  \\
 & \\
\infty & [(1,-1,0,0)]\\
 & \\
\mbox{{\rm spheres: center} $p$, {\rm signed radius} $r$} & \left[ \left( \frac{1+p \cdot p - r^2}{2}, 
\frac{1- p \cdot p +r^2} {2},p, r \right) \right] \\
 & \\ 
\mbox{{\rm planes:} $u \cdot N = h$, {\rm unit normal} $N$} & [(h,-h,N,1)] 
\end{array}
\end{equation}

\vspace{.25in}
\noindent
In Lie sphere geometry, points are considered to be spheres of radius zero, or ``point spheres.''  Point spheres do not
have an orientation.

From now on, we will use the term {\em Lie sphere} 
or simply ``sphere'' to denote an oriented
sphere, oriented plane or a point sphere in ${\bf R}^n \cup \{ \infty\}$.  We will refer to the
coordinates on the right side of equation (\ref{eq:1.3.4}) as the 
{\em Lie coordinates} of the corresponding
point, sphere or plane.  In the case of ${\bf R}^2$ and ${\bf R}^3$, respectively, these coordinates
were classically called {\em pentaspherical} and {\em hexaspherical}
coordinates (see Blaschke \cite{Bl}).  

At times
it is useful to have formulas to convert Lie coordinates back into Cartesian equations for the corresponding
Euclidean object.  Suppose first that $[x]$ is a point on the Lie quadric
with $x_1 + x_2 \neq 0$.
Then $x = \rho y$, for some $\rho \neq 0$, where $y$ is one of the standard forms on the right side of the table above.
From the table, we see that $y_1 + y_2 = 1$, for all proper points and all spheres.  Hence if we divide $x$
by $x_1 + x_2$, the new vector will be in standard form, and we can read off the corresponding Euclidean
object from the table.  In particular, if $x_{n+3} = 0$, then $[x]$ represents the point sphere $u = (u_3,\ldots,u_{n+2})$ where
\begin{equation}
\label{eq:1.3.5}
u_i = x_i / (x_1 + x_2),\quad 3 \leq i \leq n+2.
\end{equation}
If $x_{n+3} \neq 0$, then $[x]$ represents the sphere with center $p = (p_3,\ldots,p_{n+2})$ and signed
radius $r$ given by
\begin{equation}
\label{eq:1.3.6}
p_i = x_i / (x_1 + x_2),\quad 3 \leq i \leq n+2; \quad r = x_{n+3} / (x_1 + x_2).
\end{equation}
Finally, suppose that $x_1 + x_2 = 0$. If $x_{n+3} = 0$, then the equation $\langle x,x \rangle = 0$
forces $x_i$ to be zero for $3 \leq i \leq n+2$.  Thus $[x] = [(1,-1,0,\ldots,0)]$, the
improper point.  If $x_{n+3} \neq 0$, we divide $x$ by $x_{n+3}$ to make the last coordinate 1.
Then if we set $N = (N_3,\ldots,N_{n+2})$ and $h$ according to
\begin{equation}
\label{eq:1.3.7}
N_i = x_i/ x_{n+3},\quad 3 \leq i \leq n+2;\quad h = x_1/x_{n+3},
\end{equation}
the conditions $\langle x,x \rangle = 0$ and $x_1 + x_2 = 0$ force $N$ to have unit length.  Thus
$[x]$ corresponds to the hyperplane $u \cdot N = h$, with unit normal $N$ and $h$ as in equation (\ref{eq:1.3.7}).

If we wish to consider oriented hyperspheres
and point spheres in the unit sphere $S^n$ in ${\bf R}^{n+1}$, then the 
table (\ref{eq:1.3.4}) above can be simplified.  First, we have shown that in M\"{o}bius geometry, the unoriented 
hypersphere $S$ in $S^n$ with center $p \in S^n$ and spherical radius $\rho$, $0 < \rho < \pi$, corresponds to
the point $[\xi] = [(\cos \rho, p)]$ in ${\bf P}^{n+1}$.  To correspond the two orientations of this sphere to points on
the Lie quadric, we first note that
\begin{displaymath}
(\xi,\xi) = - \cos^2 \rho + 1 = \sin^2 \rho.
\end{displaymath}
Since $\sin \rho > 0$ for $0 < \rho < \pi$, we can divide $\xi$ by $\sin \rho$ and consider the two vectors
$\zeta = \pm \xi/\sin \rho$ that satisfy $(\zeta, \zeta) = 1$.  We then map these two points into the Lie quadric to
get the points
\begin{displaymath}
[(\zeta, 1)] = [(\xi, \pm \sin \rho)] = [(\cos \rho, p, \pm \sin \rho)].
\end{displaymath}
in $Q^{n+1}$.  We can incorporate the sign of the last coordinate into the radius and thereby arrange that the 
oriented sphere $S$ with 
signed radius $\rho \neq 0$, where $- \pi < \rho < \pi$, and center $p$ corresponds to the point 
\begin{equation}
\label{eq:1.4.4}
[x] = [(\cos \rho, p, \sin \rho)].
\end{equation}
in $Q^{n+1}$.  This formula still makes sense if the radius $\rho = 0$, in which case it yields the point sphere
$[(1, p, 0)]$.  

We adopt the convention that the positive radius $\rho$ in 
(\ref{eq:1.4.4}) corresponds to the orientation of the sphere given by 
the field of unit normals which are tangent vectors to geodesics in $S^n$ from $-p$ to $p$, and a negative radius corresponds
to the opposite orientation.
Each oriented sphere can be considered in two ways, with center $p$ and signed radius $\rho, - \pi < \rho < \pi$,
or with center $-p$ and the appropriate signed radius $\rho \pm \pi$.

For a given point $[x]$ in the quadric $Q^{n+1}$, we can determine the 
corresponding oriented hypersphere or point sphere in $S^n$ as follows.
Multiplying by $-1$, if necessary, we can arrange that that the first coordinate $x_1$ of $x$ is nonnegative.
If $x_1$ is positive, then it follows from equation (\ref{eq:1.4.4}) that the center $p$ and signed radius
$\rho, - \pi/2 < \rho < \pi/2$, are given by
\begin{equation}
\label{eq:1.4.5}
\tan \rho = x_{n+3}/x_1, \quad p = (x_2, \ldots, x_{n+2})/ (x_1^2 + x_{n+3}^2)^{1/2}.
\end{equation}
If $x_1 = 0$, then $x_{n+3}$ is nonzero, and we can divide by $x_{n+3}$ to obtain a point with coordinates
$(0,p,1)$.  This corresponds to the oriented hypersphere in $S^n$ with center $p$ and signed radius $\pi / 2$, which
is a great sphere in $S^n$.

\begin{remark}
{\rm In a similar way, one can develop the Lie sphere geometry of oriented spheres in real hyperbolic space $H^n$
(see, for example, \cite[p. 18]{Cec1}).}
\end{remark}

\section{Oriented Contact of Spheres}
\label{sec:5}

As we saw in Section \ref{sec:3}, the angle between two spheres is the fundamental geometric quantity in 
M\"{o}bius geometry, and it is the quantity that is preserved by M\"{o}bius transformations.  In Lie's geometry
of oriented spheres, the corresponding fundamental notion is that of oriented contact of spheres.
(See \cite[pp. 19--23]{Cec1} for more detail.)
  
By definition, two oriented spheres $S_1$ and $S_2$ in ${\bf R}^n$ are in {\em oriented contact}
if they are tangent to each other, and they have the same orientation at the point of contact
There are two geometric possibilities depending on whether the signed radii of $S_1$ and $S_2$ have the same sign or opposite signs. In either case,
if $p_1$ and $p_2$ are the respective centers of $S_1$ and $S_2$, and $r_1$ and $r_2$ are their respective
signed radii, then the analytic condition for oriented contact is
\begin{equation}
\label{eq:1.5.1}
|p_1 - p_2| = |r_1 - r_2|.
\end{equation}
Similarly, we say that an oriented hypersphere sphere $S$ with center $p$ and signed radius $r$ and an oriented
hyperplane $\pi$ with unit normal $N$ and equation $u \cdot N = h$ are in oriented contact
if $\pi$ is tangent to $S$ and their orientations agree at the point of contact.  This condition is given by the equation
\begin{equation}
\label{eq:1.5.2}
p \cdot N = r + h.
\end{equation}
Next we say that two oriented planes $\pi_1$ and $\pi_2$ are in oriented contact if their unit normals $N_1$ and $N_2$ are
the same.  These planes can be considered to be two oriented spheres in oriented contact at the improper point.
Finally, a proper point $u$ in ${\bf R}^n$ is in oriented contact with a sphere or a plane if it lies on the sphere or plane,
and the improper point is in oriented contact with each plane, since it lies on each plane.

An important fact in Lie sphere geometry is that if
$S_1$ and $S_2$ are two Lie spheres which are represented as in 
equation (\ref{eq:1.3.4}) by $[k_1]$ and $[k_2]$, then the analytic
condition for oriented contact is equivalent to the equation
\begin{equation}
\label{eq:1.5.3}
\langle k_1, k_2 \rangle = 0.
\end{equation}
This can be checked easily by a direct calculation.

By standard linear algebra in indefinite inner product spaces (see, for example, \cite[p. 21]{Cec1}), 
the fact that the signature of ${\bf R}^{n+3}_2$ is $(n+1,2)$ implies
that the Lie quadric contains projective lines in ${\bf P}^{n+2}$, but no linear subspaces of 
${\bf P}^{n+2}$ of
higher dimension. These projective lines on $Q^{n+1}$
play a crucial role in the theory of submanifolds in the
context of Lie sphere geometry.

One can show further (see \cite[pp. 21--23]{Cec1}), that if $[k_1]$ and $[k_2]$ are two points of 
$Q^{n+1}$, then  the line $[k_1,k_2]$ in ${\bf P}^{n+2}$ 
lies on $Q^{n+1}$ if and only if the spheres corresponding to $[k_1]$ and $[k_2]$ are in
oriented contact, i.e., $\langle k_1, k_2 \rangle = 0$.  Moreover,
if the line $[k_1,k_2]$ lies on $Q^{n+1}$, then the set of spheres in ${\bf R}^n$ corresponding
to points on the line $[k_1,k_2]$ is precisely the set of all spheres in oriented contact with both of these spheres. Such a 1-parameter family of spheres is called a {\em parabolic pencil} of spheres in 
${\bf R}^n \cup \{ \infty\}$.  

Each parabolic pencil contains exactly one point sphere, and if that point sphere is a proper point, then the parabolic pencil
contains exactly one hyperplane $\pi$ in ${\bf R}^n$, and
the pencil consists of all spheres in oriented contact with the oriented plane $\pi$ at $p$.  
Thus, we can associate the parabolic pencil
with the point $(p,N)$ in the unit tangent bundle 
of ${\bf R}^n \cup \{ \infty \}$, where $N$ is the unit normal to the oriented plane $\pi$.

If the point sphere in the pencil is the improper point, then the parabolic pencil
is a family of parallel hyperplanes in oriented contact at the improper point.  If $N$ is the common unit normal
to all of these planes, then we can associate the pencil with the point $(\infty, N)$ in the unit tangent
bundle of ${\bf R}^n \cup \{ \infty \}$.
  
Similarly, we can establish a correspondence between parabolic pencils and elements of the unit tangent bundle $T_1S^n$
that is expressed in terms of the spherical metric on $S^n$.  If $\ell$ is a line on the
quadric, then $\ell$ intersects both
$e_1^\perp$ and $e_{n+3}^\perp$ at exactly one point, where $e_1 = (1,0,\ldots,0)$ and $e_{n+3} = (0,\ldots,0,1)$.  
So the parabolic pencil corresponding to $\ell$ contains exactly
one point sphere (orthogonal to $e_{n+3}$) and one great sphere (orthogonal to $e_1$), 
given respectively by the points,
\begin{equation}
\label{eq:1.5.3a}
[k_1] = [(1,p,0)], \quad [k_2] = [(0,\xi,1)].
\end{equation}  
Since $\ell$ lies on the quadric, we know that 
$\langle k_1, k_2 \rangle = 0$, and this condition is equivalent to the condition $p \cdot \xi = 0$, i.e., $\xi$
is tangent to $S^n$ at $p$.  Thus,
the parabolic pencil of spheres corresponding to the line $\ell$ can
be associated with the point $(p, \xi)$ in $T_1S^n$.  More specifically, the line $\ell$ can be parametrized as
\begin{equation}
\label{eq:1.5.3b}
[K_t] = [\cos t \ k_1 + \sin t \ k_2] = [(\cos t, \cos t \ p + \sin t \ \xi, \sin t)].
\end{equation}
From equation (\ref{eq:1.4.4}) above, we see that $[K_t]$ corresponds to the oriented sphere in $S^n$ with center
\begin{equation}
\label{eq:1.5.4}
p_t = \cos t \ p + \sin t \ \xi,
\end{equation}  
and signed radius $t$.  The pencil consists of all oriented spheres in $S^n$
in oriented contact with the great sphere corresponding to $[k_2]$ at the point $(p, \xi)$ in $T_1S^n$.  
Their centers $p_t$ lie along the geodesic in $S^n$ with initial point $p$ and initial velocity
vector $\xi$. Detailed proofs of all these facts are given in \cite[pp. 21--23]{Cec1}.

We conclude this section with a discussion of Lie sphere transformations.  By definition,
a {\em Lie sphere transformation} is a projective transformation of ${\bf P}^{n+2}$ which maps
the Lie quadric $Q^{n+1}$ to itself.  In terms of the geometry of ${\bf R}^n$ or $S^n$, a Lie sphere transformation maps 
Lie spheres to Lie spheres, and since it is a projective transformation, 
it maps lines on $Q^{n+1}$ to lines on $Q^{n+1}$.  Thus, it preserves
oriented contact of spheres in ${\bf R}^n$ or $S^n$. Conversely, Pinkall \cite{P4}
(see also \cite[pp. 28--30]{Cec1}) proved the so-called 
``Fundamental Theorem of Lie sphere geometry,''
which states that any line preserving diffeomorphism 
of $Q^{n+1}$ is the restriction to $Q^{n+1}$
of a projective transformation, that is, a transformation of the space of oriented spheres which preserves oriented contact is a Lie sphere transformation.

By the same type of reasoning as given in Section \ref{sec:3} for M\"{o}bius transformations,
one can show that the group $G$ of Lie sphere transformations
is isomorphic to the group $O(n+1,2)/\{\pm I\}$, where $O(n+1,2)$ is the group of 
orthogonal transformations of ${\bf R}^{n+3}_2$.  As with the M\"{o}bius group, it follows from the
theorem of Cartan and Dieudonn\'{e} (see \cite[pp. 30--34]{Cec1}) that the Lie sphere group $G$ is generated by
Lie inversions, that is, projective transformations that are induced by reflections in $O(n+1,2)$.

The M\"{o}bius group $H$ can be considered to be a subgroup of $G$ in the following manner.
Each  M\"{o}bius transformation on the space of unoriented spheres, naturally induces two
Lie sphere transformations on the space $Q^{n+1}$ of oriented spheres as follows.  If $A$ is in $O(n+1,1)$, then
we can extend $A$ to a transformation $B$ in $O(n+1,2)$ by setting $B = A$ on ${\bf R}^{n+2}_1$ and
$B(e_{n+3}) = e_{n+3}$.  In terms the standard orthonormal basis in ${\bf R}^{n+3}_2$, the transformation
$B$ has the matrix representation,
\begin{equation}
\label{eq:2.1.4}
B = \left[ \begin{array}{cc}A&0\\0&1\end{array}\right].
\end{equation}
Although $A$ and $-A$ induce the same M\"{o}bius transformation in $H$, the Lie transformation $P(B)$
is not the same as the Lie transformation $P(C)$ induced by the matrix
\begin{displaymath}
C = \left[ \begin{array}{cc}-A&0\\0&1\end{array}\right] \simeq \left[ \begin{array}{cc}A&0\\0&-1\end{array}\right],
\end{displaymath}
where $\simeq$ denotes equivalence as projective transformations.  Note that $P(B) = \Gamma P(C)$, where $\Gamma$
is the Lie transformation represented in matrix form by
\begin{displaymath}
\Gamma = \left[ \begin{array}{cc}I&0\\0&-1\end{array}\right] \simeq \left[ \begin{array}{cc}-I&0\\0&1\end{array}\right].
\end{displaymath}
From equation (\ref{eq:1.3.4}), 
we see that $\Gamma$ has the effect of changing the orientation of
every oriented sphere or plane.  The transformation $\Gamma$ is called the 
{\em change of orientation transformation} or 
``Richtungswechsel'' in German. Hence, the two Lie sphere transformations induced by the
M\"{o}bius transformation $P(A)$ differ by this change of orientation factor.  

Thus, the group of Lie sphere transformations
induced from M\"{o}bius transformations is isomorphic to $O(n+1,1)$. 
This group consists of those Lie transformations that map $[e_{n+3}]$ to itself, and it is
a double covering of the M\"{o}bius group $H$.
Since these transformations are induced from orthogonal transformations of ${\bf R}^{n+3}_2$, they 
also map $e_{n+3}^\perp$ to itself, and thereby map point
spheres to point spheres.  When working in the context of Lie sphere geometry, we will refer to these
transformations as ``M\"{o}bius transformations.''

\section{Legendre Submanifolds}
\label{sec:6}
The goal of this section is to define a contact structure on the unit tangent bundle
$T_1S^n$ and on the $(2n-1)$-dimensional manifold
$\Lambda^{2n-1}$ of projective lines on the Lie quadric $Q^{n+1}$, and to describe its associated Legendre submanifolds.
This will enable us to study submanifolds of ${\bf R}^n$ or $S^n$ within the context of Lie sphere geometry in
a natural way.  This theory was first developed extensively in a modern
setting by Pinkall \cite{P4} (see also Cecil-Chern \cite{CC1}--\cite{CC2} or the books \cite[pp. 51--60]{Cec1},  \cite[pp. 202--212]{CR8}).

We consider $T_1S^n$ to be the $(2n-1)$-dimensional submanifold of 
\begin{displaymath}
S^n \times S^n \subset{\bf R}^{n+1} \times {\bf R}^{n+1}
\end{displaymath}
given by
\begin{equation}
\label{eq:3.1.1}
T_1S^n = \{(x, \xi) \mid \  |x| =1, \  |\xi| = 1, \  x \cdot \xi = 0\}.
\end{equation}

As shown in the previous section, the points on a line $\ell$ lying on $Q^{n+1}$ correspond to the spheres
in a parabolic pencil of spheres in $S^n$.  In particular, as in equation (\ref{eq:1.5.3a}), $\ell$ contains
one point $[k_1] = [(1,x,0)]$ corresponding to a point sphere in $S^n$, and one point
$[k_2] = [(0,\xi,1)]$ corresponding to a great sphere in $S^n$, where the coordinates are with respect to
the standard orthonormal basis $\{e_1,\ldots,e_{n+3}\}$ of ${\bf R}^{n+3}_2$. Thus we get a bijective
correspondence between the points $(x,\xi)$ of $T_1S^n$ and the 
space $\Lambda^{2n-1}$ of lines on $Q^{n+1}$
given by the map:
\begin{equation}
\label{eq:3.1.8}
(x, \xi) \mapsto [Y_1(x, \xi), Y_{n+3}(x, \xi)],
\end{equation}
where
\begin{equation}
\label{eq:3.1.9}
Y_1(x, \xi) = (1,x,0), \quad Y_{n+3}(x, \xi) = (0,\xi,1).
\end{equation}
We use this correspondence to place a natural differentiable structure on $\Lambda^{2n-1}$ in such a way as to
make the map in equation (\ref{eq:3.1.8}) a diffeomorphism.

We now show how to define a contact structure on the manifold $T_1S^n$.  By the diffeomorphism in
equation (\ref{eq:3.1.8}), this also determines a contact structure on $\Lambda^{2n-1}$.
Recall that a $(2n-1)$-dimensional manifold $V^{2n-1}$ is said to be a {\em contact manifold} 
if it carries a globally defined
1-form $\omega$ such that
\begin{equation}
\label{eq:3.1.2}
\omega \wedge (d\omega)^{n-1} \neq 0
\end{equation}
at all points of $V^{2n-1}$.  Such a form $\omega$ is called a 
{\em contact form}.  
A contact form $\omega$ determines a codimension one distribution (the {\em contact distribution})
$D$ on $V^{2n-1}$ defined by
\begin{equation}
\label{eq:3.1.3}
D_p = \{ Y \in T_pV^{2n-1} \mid \omega (Y) = 0\},
\end{equation}
for $p \in V^{2n-1}$.   
This distribution is as far from being integrable
as possible, in that there exist integral submanifolds of $D$ of dimension $n-1$, but none of higher dimension
(see, for example, \cite[p. 57]{Cec1}).  The distribution $D$ determines the corresponding contact form $\omega$ up to multiplication by a nonvanishing smooth function.

A tangent vector to $T_1S^n$ at a point $(x, \xi)$ can be written in the form $(X,Z)$ where
\begin{equation}
\label{eq:3.1.4}
X \cdot x = 0, \quad Z \cdot \xi = 0.
\end{equation}
Differentiation of the condition $x \cdot \xi = 0$ implies that $(X,Z)$ also satisfies
\begin{equation}
\label{eq:3.1.5}
X \cdot \xi + Z \cdot x = 0.
\end{equation}
Using the method of moving frames, 
one can show that the form $\omega$ defined by
\begin{equation}
\label{eq:3.1.6}
\omega (X,Z) = X \cdot \xi,
\end{equation}
is a contact form on $T_1S^n$  (see, for example, Cecil--Chern \cite{CC1} or the book \cite[pp. 52--56]{Cec1}),
and we will omit the proof here.

At a point $(x, \xi)$, the distribution $D$ is the $(2n-2)$-dimensional
space of vectors $(X,Z)$ satisfying $X \cdot \xi = 0$, as well as the equations (\ref{eq:3.1.4}) and (\ref{eq:3.1.5}).
The equation $X \cdot \xi = 0$ together with equation (\ref{eq:3.1.5}) implies that
\begin{equation}
\label{eq:3.1.7}
Z \cdot x = 0,
\end{equation}
for vectors $(X,Z)$ in $D$.

Returning to the general theory of contact structures, we let $V^{2n-1}$ be a contact manifold with contact form 
$\omega$ and corresponding 
contact distribution $D$, as in equation (\ref{eq:3.1.3}).
An immersion $\phi: W^k \rightarrow V^{2n-1}$ of a smooth $k$-dimensional manifold
$W^k$ into $V^{2n-1}$ is called an {\em integral submanifold}
of the distribution $D$ if 
$\phi^*\omega = 0$ on $W^k$, i.e., for each tangent vector $Y$ at each point $w \in W$, the vector
$d\phi (Y)$ is in the distribution $D$ at the point $\phi(w)$. (See Blair \cite[p. 36]{Blair}.)
It is well known (see, for example, \cite[p. 57]{Cec1}) that the contact distribution $D$ has integral
submanifolds of dimension $n-1$, but none of higher dimension.  These integral submanifolds of
maximal dimension are called {\em Legendre submanifolds} of the contact structure.

In our specific case, we now formulate conditions for a
smooth map $\mu: M^{n-1} \rightarrow T_1S^n$ to be a Legendre submanifold.  
We consider $T_1S^n$ as a submanifold of $S^n \times S^n$ as in equation (\ref{eq:3.1.1}), and so
we can write $\mu = (f,\xi)$, where $f$ and $\xi$ are both smooth maps from $M^{n-1}$ to $S^n$.
We have the following theorem (see \cite[p. 58]{Cec1}) giving necessary and sufficient conditions for $\mu$ to be
a Legendre submanifold.

\begin{theorem}
\label{thm:3.2.2} 
A smooth map $\mu = (f,\xi)$ from an $(n-1)$-dimensional manifold $M^{n-1}$ into $T_1S^n$ is a Legendre submanifold
if and only if the following three conditions are satisfied.
\begin{enumerate}
\item[$(1)$]  Scalar product conditions: $f \cdot f =1, \quad \xi \cdot \xi = 1, \quad f \cdot \xi = 0$.
\item[$(2)$] Immersion condition: there is no nonzero tangent vector $X$ at any point $x \in M^{n-1}$ such that $df(X)$ and $d\xi(X)$ are both equal to zero.
\item[$(3)$] Contact condition: $df \cdot \xi = 0$.
\end{enumerate}
\end{theorem}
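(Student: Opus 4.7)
The plan is to decompose the definition of a Legendre submanifold of $T_1S^n$ into its three constituent requirements and match each requirement to one of the listed conditions.

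First, I would observe that $\mu = (f,\xi)$ takes values in $T_1S^n \subset S^n \times S^n$ if and only if, at every point of $M^{n-1}$, the maps $f$ and $\xi$ satisfy $f \cdot f = 1$, $\xi \cdot \xi = 1$, and $f \cdot \xi = 0$; by the description of $T_1S^n$ in (\ref{eq:3.1.1}), this is precisely condition (1). So condition (1) is equivalent to the statement that $\mu$ actually lands in $T_1S^n$, which is a prerequisite for discussing whether it is a Legendre submanifold at all.

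Next, I would address the two remaining requirements: that $\mu$ is an immersion and that it is an integral submanifold of the contact distribution $D$. Since $\dim M^{n-1} = n-1$ equals the maximal dimension of an integral submanifold of $D$, the map $\mu$ will be a Legendre submanifold as soon as these two properties hold. For the immersion property, note that when $\mu$ is viewed as a map into $S^n \times S^n$, its differential at $p \in M^{n-1}$ sends a tangent vector $X$ to the pair $(df(X), d\xi(X))$. Injectivity of $d\mu_p$ for every $p$ is therefore equivalent to there being no nonzero $X$ with $df(X) = 0$ and $d\xi(X) = 0$, which is exactly condition (2).

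For the integrality requirement, I would compute $\mu^*\omega$ directly. Given $Y$ tangent to $M^{n-1}$ at $p$, the pushforward $d\mu(Y) = (df(Y), d\xi(Y))$ is tangent to $T_1S^n$ at $\mu(p)$, so
\[
(\mu^*\omega)_p(Y) \;=\; \omega_{\mu(p)}\bigl(df(Y), d\xi(Y)\bigr) \;=\; df(Y) \cdot \xi(p),
\]
using the definition $\omega(X,Z) = X \cdot \xi$ from (\ref{eq:3.1.6}). Hence $\mu^*\omega = 0$ on $M^{n-1}$ if and only if $df(Y) \cdot \xi = 0$ for every tangent vector $Y$, and this is exactly condition (3).

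The one point that requires care, and which is the main technical obstacle, is to verify that $(df(Y), d\xi(Y))$ is indeed a legitimate tangent vector to $T_1S^n$ at $\mu(p)$ (so that $\omega$ may be evaluated on it). This follows by differentiating the three scalar product identities in condition (1): differentiation of $f \cdot f = 1$ and $\xi \cdot \xi = 1$ gives the two relations in (\ref{eq:3.1.4}), while differentiation of $f \cdot \xi = 0$ gives (\ref{eq:3.1.5}). Combining the three matched equivalences completes the proof of the biconditional.
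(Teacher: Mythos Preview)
Your proof is correct and follows essentially the same approach as the paper: decompose the definition of Legendre submanifold into the three requirements (target in $T_1S^n$, immersion, integral submanifold of $D$) and match each to one of conditions (1)--(3) via $d\mu(X) = (df(X),d\xi(X))$ and the formula $\omega(X,Z) = X\cdot\xi$. Your extra remark that differentiating the scalar product identities shows $d\mu(Y)$ is tangent to $T_1S^n$ is a nice point of care, though it is automatic once $\mu$ maps into $T_1S^n$.
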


Note that by equation (\ref{eq:3.1.1}), the scalar product conditions are precisely the conditions 
necessary for the image of the map $\mu = (f,\xi)$ to be contained in $T_1S^n$.  Next, since
$d\mu (X) = (df(X), d\xi (X))$, Condition $(2)$ is necessary and sufficient for $\mu$ to be an immersion.  Finally, from equation (\ref{eq:3.1.6}), we see that $\omega (d\mu (X)) = df(X) \cdot \xi (x)$,
for each $X \in T_xM^{n-1}$. Hence Condition $(3)$ is equivalent to the requirement that
$\mu^*\omega = 0$ on $M^{n-1}$.

We now want to translate these conditions into the projective setting, and find necessary and sufficient
conditions for a smooth map $\lambda:M^{n-1} \rightarrow \Lambda^{2n-1}$ to be a Legendre submanifold.  We again
make use of the diffeomorphism defined in equation (\ref{eq:3.1.8}) between $T_1S^n$ and $\Lambda^{2n-1}$.  

For each $x \in M^{n-1}$, we know that $\lambda(x)$ is a line on the 
quadric $Q^{n+1}$.  This line contains exactly one
point $[Y_1(x)] = [(1, f(x), 0)]$ corresponding to a point sphere in $S^n$,
and one point $[Y_{n+3}(x)] = [(0,\xi (x), 1)]$ corresponding to a
great sphere in $S^n$. These two formulas define maps $f$ and $\xi$ from $M^{n-1}$ to $S^n$ which
depend on the choice of orthonormal basis $\{e_1,\ldots,e_{n+2}\}$
for the orthogonal complement of $e_{n+3}$.

The map $[Y_1]$ from $M^{n-1}$ to $Q^{n+1}$ is called the 
{\em M\"{o}bius projection} or {\em point sphere map}
of $\lambda$, and the map $[Y_{n+3}]$ from $M^{n-1}$ to $Q^{n+1}$ is called the 
{\em great sphere map}. The maps $f$ and $\xi$ are called the {\em spherical projection} of $\lambda$,
and the {\em spherical field of unit normals} of $\lambda$, respectively.

In this way, $\lambda$ determines a map $\mu = (f, \xi)$ from
$M^{n-1}$ to $T_1S^n$, and because of the diffeomorphism (\ref{eq:3.1.8}), $\lambda$ is a Legendre submanifold
if and only if $\mu$ satisfies the conditions of Theorem~\ref{thm:3.2.2}. 

It is often useful to have conditions for
when $\lambda$ determines a Legendre submanifold that do not depend on the special parametrization of 
$\lambda$ in terms of the point sphere and great sphere maps, $[Y_1]$ and $[Y_{n+3}]$.  
In fact, in many applications of Lie sphere geometry to submanifolds of $S^n$ or ${\bf R}^n$,
it is better to consider $\lambda = [Z_1, Z_{n+3}]$, where $Z_1$ and
$Z_{n+3}$ are not the point sphere and great sphere maps.

Pinkall \cite{P4} gave the following projective formulation of the conditions 
needed for a Legendre submanifold.  In his paper, Pinkall referred to a Legendre
submanifold as a ``Lie geometric hypersurface.''
The proof that the three conditions
of the theorem below
are equivalent to the three conditions of Theorem~\ref{thm:3.2.2} can be found in \cite[pp. 59--60]{Cec1}.

\begin{theorem}
\label{thm:3.2.3} 
Let $\lambda:M^{n-1} \rightarrow \Lambda^{2n-1}$ be a smooth map with $\lambda = [Z_1, Z_{n+3}]$, where $Z_1$ and
$Z_{n+3}$ are smooth maps from $M^{n-1}$ into ${\bf R}^{n+3}_2$.  Then $\lambda$ determines 
a Legendre submanifold if and only
if $Z_1$ and $Z_{n+3}$ satisfy the following conditions.
\begin{enumerate}
\item[$(1)$] Scalar product conditions:
for each $x \in M^{n-1}$, the vectors $Z_1(x)$ and $Z_{n+3}(x)$ are linearly independent
and 
\begin{displaymath}
\langle Z_1,Z_1 \rangle = 0, \quad \langle Z_{n+3}, Z_{n+3} \rangle = 0, \quad \langle Z_1, Z_{n+3} \rangle = 0.
\end{displaymath} 
\item[$(2)$] Immersion condition:
there is no nonzero tangent vector $X$ at any point $x \in M^{n-1}$ such that
$dZ_1(X)$ and $dZ_{n+3}(X)$ are both in 
\begin{displaymath}
{\rm Span}\  \{Z_1(x),Z_{n+3}(x)\}.
\end{displaymath}
\item[$(3)$] Contact condition: $\langle dZ_1, Z_{n+3} \rangle = 0$.
\end{enumerate}
These conditions are invariant under a reparametrization $\lambda = [W_1, W_{n+3}]$, where 
$W_1 = \alpha Z_1 + \beta Z_{n+3}$ and $W_{n+3} = \gamma Z_1 + \delta Z_{n+3}$, for smooth functions
$\alpha, \beta, \gamma, \delta$ on $M^{n-1}$ with $\alpha \delta - \beta \gamma \neq 0.$ 
\end{theorem}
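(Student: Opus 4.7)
The plan is to reduce Theorem \ref{thm:3.2.3} to Theorem \ref{thm:3.2.2} via the diffeomorphism (\ref{eq:3.1.8}) between $\Lambda^{2n-1}$ and $T_1 S^n$. Specifically, I will show that given a smooth map $\lambda = [Z_1, Z_{n+3}]$ satisfying conditions (1)--(3) of Theorem \ref{thm:3.2.3}, one can locally smoothly reparametrize $\lambda$ into the canonical form $[Y_1, Y_{n+3}]$ with $Y_1(x) = (1, f(x), 0)$ and $Y_{n+3}(x) = (0, \xi(x), 1)$. Once in this form, the three Lie-algebraic conditions of Theorem \ref{thm:3.2.3} translate coordinatewise into the three conditions of Theorem \ref{thm:3.2.2} for the associated map $\mu = (f, \xi)$, yielding the desired equivalence.

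For the reparametrization, observe that for each $x \in M^{n-1}$, the line $\lambda(x) \subset Q^{n+1}$ meets each of the hyperplanes $e_{n+3}^\perp$ and $e_1^\perp$ in exactly one projective point (the point sphere and great sphere of the parabolic pencil), and these two points are distinct. Neither hyperplane can contain the entire line, because $e_j^\perp \cap Q^{n+1}$ is the null cone of a Lorentzian form of signature $(n+1,1)$, which contains no projective lines; and a common intersection point would satisfy $x_1 = x_{n+3} = 0$ together with $\langle x, x \rangle = 0$, forcing $x = 0$. Coordinatewise this is exactly the invertibility of
\[
M(x) = \begin{pmatrix} (Z_1)_1 & (Z_{n+3})_1 \\ (Z_1)_{n+3} & (Z_{n+3})_{n+3} \end{pmatrix}(x),
\]
which by Cramer's rule produces smooth functions $\alpha, \beta, \gamma, \delta$ (with $\alpha\delta - \beta\gamma \neq 0$) so that $W_1 := \alpha Z_1 + \beta Z_{n+3}$ and $W_{n+3} := \gamma Z_1 + \delta Z_{n+3}$ have the prescribed first and last coordinates. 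To legitimize replacing $Z_1, Z_{n+3}$ by $W_1, W_{n+3}$ I then verify the final clause of the theorem directly: polarizing the identities in (1) yields $\langle dZ_1, Z_1 \rangle = \langle dZ_{n+3}, Z_{n+3} \rangle = 0$ and $\langle Z_1, dZ_{n+3} \rangle = -\langle dZ_1, Z_{n+3} \rangle = 0$, after which bilinear expansion propagates (1) and (3) to $W_1, W_{n+3}$; the immersion condition (2) transfers because invertibility of $\bigl(\begin{smallmatrix}\alpha & \beta \\ \gamma & \delta\end{smallmatrix}\bigr)$ makes the span condition for $W_1, W_{n+3}$ equivalent to the one for $Z_1, Z_{n+3}$.

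Finally, in the canonical form, condition (1) of Theorem \ref{thm:3.2.3} becomes $f \cdot f = 1$, $\xi \cdot \xi = 1$, $f \cdot \xi = 0$, with linear independence of $Y_1, Y_{n+3}$ automatic from the first and last coordinates. The contact condition (3) becomes $\langle (0, df, 0), (0, \xi, 1) \rangle = df \cdot \xi = 0$. For the immersion condition, elements of $\mathrm{Span}\{Y_1, Y_{n+3}\}$ have the form $(a, af + b\xi, b)$, so $dY_1(X) = (0, df(X), 0)$ and $dY_{n+3}(X) = (0, d\xi(X), 0)$ lie in this span if and only if $df(X) = 0$ and $d\xi(X) = 0$, matching (2) of Theorem \ref{thm:3.2.2}. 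I expect the main obstacle to lie in the smoothness of the reparametrization: the pointwise construction of $(f, \xi)$ is transparent, but smooth dependence on $x$ rests on the invertibility of $M(x)$, which in turn relies on the global geometric fact that Lorentzian null cones contain no projective lines.
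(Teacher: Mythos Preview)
Your approach is correct and matches what the paper indicates: the paper does not give a self-contained proof of Theorem~\ref{thm:3.2.3} but states that one shows the three conditions are equivalent to the three conditions of Theorem~\ref{thm:3.2.2}, referring to \cite[pp.~59--60]{Cec1} for details. Your argument---verifying reparametrization invariance of (1)--(3), using invertibility of the coordinate matrix $M(x)$ (guaranteed because the signature-$(n+1,1)$ quadrics $e_1^\perp \cap Q^{n+1}$ and $e_{n+3}^\perp \cap Q^{n+1}$ contain no projective lines) to pass smoothly to the canonical pair $Y_1=(1,f,0)$, $Y_{n+3}=(0,\xi,1)$, and then reading off the conditions of Theorem~\ref{thm:3.2.2}---is exactly the intended reduction and is carried out correctly.
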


Every oriented hypersurface in $S^n$ or ${\bf R}^n$ naturally
induces a Legendre submanifold of $\Lambda^{2n-1}$, as does every submanifold of codimension $m>1$ in these spaces.
Conversely, a Legendre submanifold naturally induces a
smooth map into $S^n$ or ${\bf R}^n$, which may have singularities.  We now study the details of these maps.

Let $f:M^{n-1} \rightarrow S^n$ be an immersed oriented hypersurface with field of unit normals
$\xi:M^{n-1} \rightarrow S^n$.  The induced Legendre submanifold 
is given by the map $\lambda:M^{n-1} \rightarrow \Lambda^{2n-1}$ defined by 
$\lambda(x) = [Y_1(x), Y_{n+3} (x)]$, where
\begin{equation}
\label{eq:3.3.1}
Y_1(x) = (1, f(x), 0), \quad Y_{n+3}(x) = (0,\xi (x), 1).
\end{equation} 
The map $\lambda$ is called the {\em Legendre lift}
of the immersion $f$ with field of unit normals
$\xi$. 

To show that $\lambda$ is a Legendre submanifold, we check
the conditions of Theorem~\ref{thm:3.2.3}. Condition (1) is satisfied since both
$f$ and $\xi$ are maps into $S^n$, and $\xi(x)$ is tangent to $S^n$ at $f(x)$ for each $x$ in $M^{n-1}$.
Since $f$ is an immersion, $dY_1(X)= (0,df(X),0)$ is not in Span $\{Y_1(x),Y_{n+3}(x)\}$,
for any nonzero vector $X \in T_xM^{n-1}$, and so Condition $(2)$ is satisfied. Finally, Condition (3) is satisfied since
\begin{displaymath}
\langle dY_1(X), Y_{n+3}(x) \rangle = df(X) \cdot \xi(x) = 0,
\end{displaymath} 
because $\xi$ is a field of unit normals to $f$.

In the case of a submanifold $\phi: V \rightarrow S^n$ of codimension $m+1$ greater than one, the domain of the
Legendre lift is
be the unit normal bundle $B^{n-1}$ 
of the submanifold $\phi (V)$.  We consider $B^{n-1}$ to be the
submanifold of $V \times S^n$ given by
\begin{displaymath}
B^{n-1} = \{ (x,\xi)| \phi(x) \cdot \xi = 0, \ d\phi(X) \cdot \xi = 0,\  \mbox{\rm for all }X \in T_xV\}.
\end{displaymath} 
The {\em Legendre lift}  of
$\phi$  is the map $\lambda: B^{n-1} \rightarrow \Lambda^{2n-1}$ defined by
\begin{equation}
\label{eq:3.3.2}
\lambda(x,\xi) = [Y_1(x,\xi), Y_{n+3}(x,\xi)],
\end{equation} 
where
\begin{equation}
\label{eq:3.3.3}
Y_1(x,\xi) = (1, \phi(x), 0), \quad Y_{n+3}(x,\xi) = (0, \xi, 1).
\end{equation} 
Geometrically, $\lambda(x,\xi)$ is the line on the quadric $Q^{n+1}$ corresponding to the 
parabolic parabolic pencil
of spheres in $S^n$ in oriented contact at the contact element $(\phi(x),\xi) \in T_1S^n$.
In \cite[pp. 61--62]{Cec1}, we show that $\lambda$ satisfies the conditions of Theorem~\ref{thm:3.2.3},
and we omit the proof here.

Similarly, suppose that
$F:M^{n-1} \rightarrow {\bf R}^n$ is an oriented hypersurface with field of unit normals $\eta:M^{n-1} \rightarrow 
{\bf R}^n$, where we identify ${\bf R}^n$ with the subspace of ${\bf R}^{n+3}_2$ spanned by
$\{e_3, \ldots, e_{n+2}\}$.  The Legendre lift of $(F, \eta)$ is the map 
$\lambda:M^{n-1} \rightarrow \Lambda^{2n-1}$ defined by $\lambda = [Y_1, Y_{n+3}]$, where
\begin{equation}
\label{eq:3.3.7}
Y_1 = (1+F\cdot F, 1 - F \cdot F, 2F,0)/2, \quad Y_{n+3} = (F \cdot \eta, - (F \cdot \eta), \eta, 1).
\end{equation} 
By equation (\ref{eq:1.3.4}), $[Y_1(x)]$ corresponds to the point sphere and
$[Y_{n+3}(x)]$ corresponds to the hyperplane
in the parabolic pencil determined by the line $\lambda(x)$ for each
$x \in M^{n-1}$.  One can easily verify that Conditions (1)--(3) of Theorem~\ref{thm:3.2.3} 
are satisfied in a manner similar
to the spherical case.  In the case of a submanifold $\psi:V \rightarrow {\bf R}^n$ of codimension greater
than one, the Legendre lift of $\psi$ is the map $\lambda$ from the unit normal
bundle $B^{n-1}$ to $\Lambda^{2n-1}$ defined by 
$\lambda(x,\eta) = [Y_1(x,\eta), Y_{n+3}(x,\eta)]$, where
\begin{eqnarray}
\label{eq:3.3.8}
Y_1(x, \eta) & = & (1 + \psi(x) \cdot \psi(x),1 - \psi(x) \cdot \psi(x), 2 \psi(x), 0)/2,\\
Y_{n+3}(x, \eta) & = & (\psi(x) \cdot \eta, -(\psi(x) \cdot \eta), \eta, 1). \nonumber
\end{eqnarray}
The verification that the pair $\{Y_1, Y_{n+3}\}$ satisfies conditions (1)--(3) of Theorem~\ref{thm:3.2.3}
is similar to that for submanifolds of
$S^n$ of codimension greater than one,
and we omit that proof here also.

Conversely, suppose that $\lambda: M^{n-1} \rightarrow \Lambda^{2n-1}$ is an arbitrary Legendre submanifold. 
We have seen above that we can parametrize $\lambda$ as $\lambda = [Y_1, Y_{n+3}]$, where 
\begin{equation}
\label{eq:3.3.10}
Y_1 = (1, f, 0), \quad Y_{n+3} = (0,\xi, 1),
\end{equation} 
for the spherical projection $f$ and spherical
field of unit normals $\xi$. Both $f$ and $\xi$ are smooth maps, but neither need be
an immersion or even have constant rank (see \cite[pp. 63--64]{Cec1}).

The Legendre lift of an oriented
hypersurface in $S^n$ is the special case where the spherical projection 
$f$ is an immersion, i.e., $f$ has constant
rank $n-1$ on $M^{n-1}$.  In the case of the Legendre lift of a submanifold 
$\phi:V^k \rightarrow S^n$, the spherical
projection $f:B^{n-1} \rightarrow S^n$ defined by $f(x,\xi) = \phi (x)$ has constant 
rank $k$.

If the range of the point sphere map $[Y_1]$ does not contain the improper point $[(1,-1,0,\ldots,0)]$, then
$\lambda$ also determines a {\em Euclidean projection}  $F$, where
$F:M^{n-1} \rightarrow {\bf R}^n$,
and a {\em Euclidean field of unit normals} $\eta$, where
$\eta:M^{n-1} \rightarrow {\bf R}^n$.
These are defined by the equation 
$\lambda = [Z_1, Z_{n+3}]$, where
\begin{equation}
\label{eq:3.3.11}
Z_1 = (1+F\cdot F, 1 - F \cdot F, 2F,0)/2, \quad Z_{n+3} = (F \cdot \eta, - (F \cdot \eta), \eta, 1).
\end{equation} 
Here $[Z_1(x)]$ corresponds to the unique point sphere in the parabolic pencil determined by $\lambda(x)$,
and $[Z_{n+3}(x)]$ corresponds to the unique plane in this pencil.  As in the spherical case, the smooth maps
$F$ and $\eta$ need not have constant rank.

\section{Curvature Spheres}
\label{sec:7}
To motivate the definition of a curvature sphere
we consider the case of an oriented hypersurface
$f:M^{n-1} \rightarrow S^n$ with field of unit normals $\xi:M^{n-1} \rightarrow S^n$.  
(We could consider an oriented hypersurface in ${\bf R}^n$, but the calculations are simpler in the spherical case.)

The shape operator
of $f$ at a point $x \in M^{n-1}$ is the symmetric linear transformation $A:T_xM^{n-1} \rightarrow T_xM^{n-1}$
defined on the tangent space $T_xM^{n-1}$ by the equation
\begin{equation}
\label{eq:3.4.1}
df(AX) = - d\xi(X), \quad X \in T_xM^{n-1}.
\end{equation} 
The eigenvalues of $A$ are called the 
{\em principal curvatures}, and the corresponding eigenvectors are
called the {\em principal vectors}.  
We next recall the notion of a focal point of an immersion.  For each
real number $t$, define a map
\begin{displaymath}
f_t:M^{n-1} \rightarrow S^n,
\end{displaymath}
by
\begin{equation}
\label{eq:3.4.2}
f_t = \cos t \ f + \sin t \ \xi.
\end{equation} 
For each $x \in M^{n-1}$, the point $f_t(x)$ lies an oriented distance $t$ along the normal geodesic
to $f(M^{n-1})$ at $f(x)$.  A point $p = f_t(x)$ is called a 
{\em focal point of multiplicity} $m>0$
{\em of} $f$ {\em at} $x$ if the nullity of $df_t$ is equal to $m$ at $x$.  Geometrically, one thinks of focal points
as points where nearby normal geodesics intersect.  It is well known that the location of focal points is related to the principal curvatures.  Specifically, if $X \in T_xM^{n-1}$, then by equation (\ref{eq:3.4.1}) we have
\begin{equation}
\label{eq:3.4.3}
df_t(X) = \cos t \ df(X) + \sin t \ d\xi(X) = df(\cos t \ X - \sin t \ AX).
\end{equation} 
Thus, $df_t(X)$ equals zero for $X\neq0$ if and only if $\cot t$ is a principal curvature of $f$ at $x$,
and $X$ is a corresponding principal vector.  Hence, $p = f_t(x)$ is a focal point of $f$ at $x$ of
multiplicity $m$ if and only if $\cot t$ is a principal curvature of multiplicity $m$ at $x$.  Note
that each principal curvature 
\begin{displaymath}
\kappa = \cot t, \quad 0<t<\pi,
\end{displaymath}
produces two distinct antipodal focal points
on the normal geodesic with parameter values $t$ and $t+\pi$.  The oriented hypersphere centered at a focal point
$p$ and in oriented contact with $f(M^{n-1})$ at $f(x)$ is called a 
{\em curvature sphere} of $f$ at $x$.  The
two antipodal focal points determined by $\kappa$ are the two centers of the corresponding curvature sphere.
Thus, the correspondence between principal curvatures and curvature spheres is bijective.  The multiplicity
of the curvature sphere is by definition equal to the multiplicity of the corresponding principal curvature.

We now formulate the notion of curvature sphere in the context of Lie sphere geometry.  
As in equation (\ref{eq:3.3.1}), the Legendre lift $\lambda: M^{n-1} \rightarrow \Lambda^{2n-1}$ 
of the oriented hypersurface $(f,\xi)$ is given by $\lambda = [Y_1, Y_{n+3}]$, where
\begin{equation}
\label{eq:3.4.4}
Y_1 = (1, f, 0), \quad Y_{n+3} = (0, \xi, 1).
\end{equation} 
For each $x \in M^{n-1}$, the points on the line $\lambda(x)$ can be parametrized as
\begin{equation}
\label{eq:3.4.5}
[K_t(x)] = [\cos t \ Y_1(x) + \sin t \ Y_{n+3}(x)] = [(\cos t,f_t(x),\sin t)],
\end{equation} 
where $f_t$ is given in equation (\ref{eq:3.4.2}) above.  By equation (\ref{eq:1.4.4}), the point
$[K_t(x)]$ in $Q^{n+1}$ corresponds to the oriented sphere in $S^n$ with center $f_t(x)$ and signed 
radius $t$.
This sphere is in oriented contact with the oriented hypersurface $f(M^{n-1})$ at $f(x)$.  Given a tangent vector
$X \in T_xM^{n-1}$, we have
\begin{equation}
\label{eq:3.4.6}
dK_t(X)= (0,df_t(X),0).
\end{equation} 
Thus, $dK_t(X)= (0,0,0)$ for a nonzero vector $X \in T_xM^{n-1}$ if and only if $df_t(X) = 0$, 
i.e., $p = f_t(x)$ is a focal point of $f$ at $x$
corresponding to the principal curvature $\cot t$.  The vector $X$ is a principal vector corresponding to the principal 
curvature $\cot t$, and it is also called a principal vector corresponding to the curvature sphere $[K_t]$.

This characterization of curvature spheres depends on the parametrization of $\lambda = [Y_1, Y_{n+3}]$ given by
the point sphere and great sphere maps $[Y_1]$ and $[Y_{n+3}]$, respectively,
and it has only been defined in the case where the spherical projection $f$ is an immersion.
We now give a projective formulation of the definition of a curvature sphere that is
independent of the parametrization of $\lambda$ and is valid for 
an arbitrary Legendre submanifold. 

Let $\lambda: M^{n-1} \rightarrow \Lambda^{2n-1}$ be a Legendre submanifold parametrized by the pair 
$\{Z_1, Z_{n+3}\}$, as in Theorem~\ref{thm:3.2.3}.  Let $x \in M^{n-1}$ and $r,s \in {\bf R}$ with
at least one of $r$ and $s$ not equal to zero.  The sphere,
\begin{displaymath}
[K] = [r Z_1(x) + s Z_{n+3}(x)],
\end{displaymath}
is called a {\em curvature sphere}
of $\lambda$ at $x$ if there exists a nonzero vector $X$ in $T_xM^{n-1}$ such that
\begin{equation}
\label{eq:3.4.7}
r \ dZ_1(X) + s \ dZ_{n+3}(X) \in \mbox{\rm Span } \{Z_1(x),Z_{n+3}(x)\}.
\end{equation} 
The vector $X$ is called a {\em principal vector} 
corresponding to the curvature sphere $[K]$.  
This definition is invariant under a change of parametrization of the form considered in the statement of 
Theorem~\ref{thm:3.2.3}.  Furthermore, if we take the special parametrization $Z_1 = Y_1$, $Z_{n+3} = Y_{n+3}$
given in equation (\ref{eq:3.4.4}), then condition (\ref{eq:3.4.7}) holds if and only if $r \ dY_1(X) + s \ dY_{n+3}(X)$
actually equals $(0,0,0)$. 

From equation (\ref{eq:3.4.7}), it is clear that the set of principal vectors corresponding to a given curvature
sphere $[K]$ at $x$ is a subspace of $T_xM^{n-1}$.  This set is called the 
{\em principal space} corresponding
to the curvature sphere $[K]$.  Its dimension is the 
{\em multiplicity} of $[K]$. 
The reader is referred to Cecil--Chern
\cite{CC1}--\cite{CC2} for a development of the notion of a curvature sphere in the context of Lie sphere geometry, without beginning with submanifolds of $S^n$ or ${\bf R}^n$.

We next show that a Lie sphere transformation maps curvature spheres to curvature spheres.  We first need
to discuss the notion of Lie equivalent Legendre submanifolds.
Let $\lambda: M^{n-1} \rightarrow \Lambda^{2n-1}$ be a Legendre submanifold parametrized by $\lambda = [Z_1, Z_{n+3}]$.
Suppose $\beta = P(B)$ is the Lie sphere transformation induced by an 
orthogonal transformation $B$ in the group $O(n+1,2)$.
Since $B$ is orthogonal, the maps, $W_1 = BZ_1$, $W_{n+3} = BZ_{n+3}$, satisfy the 
Conditions (1)--(3) of Theorem~\ref{thm:3.2.3}, and thus $\gamma = [W_1, W_{n+3}]$ is a Legendre submanifold
which we denote by $\beta \lambda: M^{n-1} \rightarrow \Lambda^{2n-1}$.
We say that the Legendre submanifolds $\lambda$ 
and $\beta \lambda$ are {\em Lie equivalent}.  In terms of submanifolds of real space forms,
we say that two immersed submanifolds of ${\bf R}^n$ or $S^n$  are 
{\em Lie equivalent} if their Legendre lifts are Lie equivalent.

\begin{theorem}
\label{thm:3.4.3} 
Let $\lambda: M^{n-1} \rightarrow \Lambda^{2n-1}$ be a Legendre submanifold and $\beta$ a Lie sphere transformation.
The point $[K]$ on the line $\lambda(x)$ is a curvature sphere of $\lambda$ at $x$ if and only if the
point $\beta [K]$ is a curvature sphere of the Legendre submanifold $\beta \lambda$ at $x$.  Furthermore,
the principal spaces corresponding to $[K]$ and $\beta [K]$ are identical. 
\end{theorem}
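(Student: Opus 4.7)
The plan is to exploit the fact that a Lie sphere transformation $\beta = P(B)$ is induced by a fixed linear isomorphism $B$ of $\mathbf{R}^{n+3}_2$, which therefore commutes with differentiation of any smooth map into $\mathbf{R}^{n+3}_2$. First I would fix the parametrization $\beta \lambda = [W_1, W_{n+3}]$ with $W_1 = BZ_1$ and $W_{n+3} = BZ_{n+3}$, which is legitimate because (as noted earlier in the excerpt) this pair automatically satisfies Conditions (1)--(3) of Theorem~\ref{thm:3.2.3}, and because the curvature sphere definition is invariant under change of parametrization of the form allowed there. With this choice, one has $\beta[K] = [rW_1(x) + sW_{n+3}(x)]$, so it only remains to compare the curvature sphere conditions for $\lambda$ and $\beta\lambda$ at $x$ with respect to the same vector $X \in T_xM^{n-1}$.

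The key step is the chain of identities
\begin{equation*}
r\, dW_1(X) + s\, dW_{n+3}(X) \;=\; B\bigl(r\, dZ_1(X) + s\, dZ_{n+3}(X)\bigr),
\end{equation*}
which holds because $B$ is a linear endomorphism not depending on $x$. Moreover, since $B$ is a linear isomorphism of $\mathbf{R}^{n+3}_2$, it maps the two-dimensional subspace $\mathrm{Span}\{Z_1(x), Z_{n+3}(x)\}$ bijectively onto $\mathrm{Span}\{W_1(x), W_{n+3}(x)\}$. Consequently,
\begin{equation*}
r\, dZ_1(X) + s\, dZ_{n+3}(X) \in \mathrm{Span}\{Z_1(x), Z_{n+3}(x)\}
\end{equation*}
holds if and only if
\begin{equation*}
r\, dW_1(X) + s\, dW_{n+3}(X) \in \mathrm{Span}\{W_1(x), W_{n+3}(x)\}.
\end{equation*}

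The equivalence above immediately yields both halves of the theorem: $[K] = [rZ_1(x) + sZ_{n+3}(x)]$ is a curvature sphere of $\lambda$ at $x$ with principal vector $X$ precisely when $\beta[K] = [rW_1(x) + sW_{n+3}(x)]$ is a curvature sphere of $\beta\lambda$ at $x$ with the same principal vector $X$. In particular, the subspace of $T_xM^{n-1}$ consisting of principal vectors corresponding to $[K]$ equals the one corresponding to $\beta[K]$, which is the second assertion. I do not anticipate a serious obstacle; the entire argument is a linearity computation, and the only point requiring care is to use the reparametrization freedom built into Theorem~\ref{thm:3.2.3} so that the comparison of spans on the two sides is just the image of a single two-dimensional subspace under $B$.
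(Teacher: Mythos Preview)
Your proof is correct and follows essentially the same approach as the paper: both fix the parametrization $W_i = BZ_i$, use the linearity of the constant transformation $B$ to obtain $r\,dW_1(X) + s\,dW_{n+3}(X) = B(r\,dZ_1(X) + s\,dZ_{n+3}(X))$, and then observe that $B$ carries $\mathrm{Span}\{Z_1(x),Z_{n+3}(x)\}$ onto $\mathrm{Span}\{W_1(x),W_{n+3}(x)\}$. If anything, you are slightly more explicit than the paper in spelling out why $B$ being an isomorphism forces the equivalence of the two span conditions and in drawing the conclusion about principal spaces.
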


\begin{proof}
Let $\lambda = [Z_1, Z_{n+3}]$ and $\beta \lambda = [W_1, W_{n+3}]$ as above.
For a tangent vector $X \in T_xM^{n-1}$ and real numbers $r$ and $s$, at least one of which is not zero, we have
\begin{eqnarray}
\label{eq:3.4.8}
r \ dW_1(X) + s \ dW_{n+3}(X) & = & r\ d(BZ_1)(X) + s\ d(BZ_{n+3})(X)\\
& = &B (r \ dZ_1(X) + s \ dZ_{n+3}(X)),\nonumber
\end{eqnarray} 
since $B$ is a constant linear transformation.  Thus, we see that
\begin{displaymath}
r \ dW_1(X) + s \ dW_{n+3}(X) \in \mbox{\rm Span } \{W_1(x),W_{n+3}(x)\}
\end{displaymath} 
if and only if
\begin{displaymath}
r \ dZ_1(X) + s \ dZ_{n+3}(X) \in \mbox{\rm Span } \{Z_1(x),Z_{n+3}(x)\}.
\end{displaymath} 
\end{proof}

We next consider the case when the Lie sphere transformation $\beta$ is a 
spherical parallel transformation $P_t$ defined in terms of the standard basis of ${\bf R}^{n+3}_2$ by
\begin{eqnarray}
\label{eq:3.4.9}
P_t e_1 & = & \cos t \ e_1 + \sin t \ e_{n+3}, \nonumber \\
P_t e_{n+3} & = & - \sin t \ e_1 + \cos t \ e_{n+3},\\
P_t e_i & = & e_i, \quad 2 \leq i \leq n+2. \nonumber
\end{eqnarray}
The transformation $P_t$ has the effect of adding $t$ to the signed radius 
of each oriented sphere in $S^n$ while keeping the center
fixed (see, for example, \cite[pp. 48--49]{Cec1}).

If $\lambda: M^{n-1} \rightarrow \Lambda^{2n-1}$ is a Legendre submanifold parametrized by the 
point sphere map $Y_1 = (1,f,0)$ and the great sphere map $Y_{n+3} = (0,\xi,1)$,  
then $P_t \lambda = [W_1, W_{n+3}]$, where 
\begin{equation}
\label{eq:3.4.10}
W_1 = P_t Y_1 = (\cos t, f,\sin t), \quad W_{n+3} = P_t Y_{n+3} = (- \sin t, \xi, \cos t).
\end{equation} 
Note that $W_1$ and $W_{n+3}$ are not the point sphere and great sphere maps for $P_t \lambda$.  Solving for the point sphere map $Z_1$ and the great sphere map $Z_{n+3}$ of $P_t \lambda$, we find
\begin{eqnarray}
\label{eq:3.4.11}
Z_1 & = & \cos t \ W_1 - \sin t \ W_{n+3} = (1, \cos t \ f - \sin t \ \xi, 0),  \\
Z_{n+3} & = &  \sin t \ W_1 + \cos t \ W_{n+3} = (0, \sin t \ f + \cos t \ \xi, 1).\nonumber
\end{eqnarray}
From this, we see that $P_t \lambda$ has spherical projection and spherical unit normal field given, respectively, by
\begin{eqnarray}
\label{eq:3.4.12}
f_{-t} & = & \cos t \ f - \sin t \ \xi = \cos (-t)f + \sin (-t) \xi,  \\
\xi_{-t} & = &  \sin t \ f + \cos t \ \xi = - \sin (-t) f + \cos (-t) \xi.\nonumber
\end{eqnarray}
The minus sign occurs because $P_t$ takes a sphere with center $f_{-t}(x)$ and radius $-t$ to the point sphere
$f_{-t}(x)$.  We call $P_t \lambda$ a {\em parallel submanifold}
of $\lambda$.  Formula (\ref{eq:3.4.12}) shows
the close correspondence between these parallel submanifolds and the 
parallel hypersurfaces $f_t$ to $f$,
in the case where $f$ is an immersed hypersurface.  

In the case where the spherical projection $f$ is an immersion at a point $x \in M^{n-1}$, we know that the number
of values of $t$ in the interval $[0,\pi)$ for which $f_t$ is not an immersion is at most $n-1$, the
maximum number of distinct principal curvatures of $f$ at $x$.  Pinkall \cite[p. 428]{P4} proved that this statement is also true for an arbitrary Legendre submanifold, even if the spherical projection $f$ is not an immersion
at $x$ by proving the following theorem (see also \cite[pp. 68--72]{Cec1} for a proof).

\begin{theorem}
\label{thm:3.4.4} 
Let $\lambda: M^{n-1} \rightarrow \Lambda^{2n-1}$ be a Legendre submanifold with spherical projection $f$ and spherical
unit normal field $\xi$.  Then for each $x \in M^{n-1}$, the parallel map, 
\begin{displaymath}
f_t  =  \cos t \ f + \sin t \ \xi,
\end{displaymath}
fails to be an immersion at $x$ for at most $n-1$ values of $t \in [0,\pi)$.
\end{theorem}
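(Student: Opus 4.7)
The plan is to reformulate ``$f_t$ fails to be an immersion at $x$'' as a curvature sphere condition on the line $\lambda(x)$, and then to bound the number of such curvature spheres by $n-1$ using the symmetry inherent in the Legendre contact condition.

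Direct computation gives $df_t = \cos t\,df + \sin t\,d\xi$, so $f_t$ fails to be an immersion at $x$ precisely when the map $B(t) := \cos t\,df + \sin t\,d\xi$ on $T_xM^{n-1}$ has a nonzero kernel $V_t$. For the special parametrization $Y_1 = (1,f,0)$, $Y_{n+3} = (0,\xi,1)$, this is equivalent via equation (\ref{eq:3.4.7}) to $[K_t(x)]$ being a curvature sphere of $\lambda$ at $x$ with principal space $V_t$. Since any nonzero $V_t$ has dimension at least one, the theorem reduces to showing that the subspaces $V_{t_i}$ for distinct bad parameters $t_i \in [0,\pi)$ form an internal direct sum in the $(n-1)$-dimensional space $T_xM^{n-1}$.

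Pairwise triviality $V_t \cap V_{t'} = 0$ for $t \ne t'$ is immediate: any $X$ in the intersection satisfies a $2\times 2$ system in $(df(X),d\xi(X))$ with determinant $\sin(t'-t) \ne 0$, so $df(X) = d\xi(X) = 0$ and the immersion condition of Theorem~\ref{thm:3.2.2} forces $X = 0$. For the upgrade to direct sum I use the symmetry of the bilinear form $h(X,Y) := df(X)\cdot d\xi(Y)$, which follows from differentiating the contact condition $\partial_i f\cdot \xi = 0$ with respect to $u^j$ to obtain $\partial_j\partial_i f\cdot \xi + \partial_i f\cdot\partial_j\xi = 0$ and invoking $\partial_j\partial_i f = \partial_i\partial_j f$, yielding $\partial_i f\cdot\partial_j\xi = \partial_j f\cdot\partial_i\xi$.

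Now suppose $X_1 + \cdots + X_k = 0$ with $X_i \in V_{t_i}$. Pairing the relation $\cos t_i\,df(X_i) + \sin t_i\,d\xi(X_i) = 0$ with $df(X_j)$, and the relation for index $j$ with $df(X_i)$, then invoking symmetry of $h$, gives a $2\times 2$ system in $(df(X_i)\cdot df(X_j),\,h(X_i,X_j))$ of determinant $\sin(t_j - t_i)\neq 0$, forcing both to vanish for $i\ne j$. A parallel pairing with $d\xi(X_j)$ and $d\xi(X_i)$ yields $d\xi(X_i)\cdot d\xi(X_j) = 0$ as well, since $t_i \ne t_j$ in $[0,\pi)$ implies at least one of $\sin t_i, \sin t_j$ is nonzero. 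Then dotting $\sum_i df(X_i) = 0$ with $df(X_k)$ and $\sum_i d\xi(X_i) = 0$ with $d\xi(X_k)$ forces $|df(X_k)|^2 = |d\xi(X_k)|^2 = 0$, so $X_k = 0$ by the immersion condition. The main obstacle is this direct-sum step: absent the symmetry of $h$ one can construct abstract pairs $df,d\xi$ of rank-one maps with trivially intersecting kernels for which $B(t)$ is degenerate for every $t$, and it is precisely the Legendre contact condition, through the equality of mixed partials, that rules out this pathology.
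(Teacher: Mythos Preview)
Your proof is correct. The paper does not actually prove this theorem; it attributes the result to Pinkall \cite{P4} and refers to \cite[pp.~68--72]{Cec1} for a proof, so there is no in-paper argument to compare against directly. That said, the essential mechanism you isolate---the symmetry of the bilinear form $h(X,Y)=df(X)\cdot d\xi(Y)$, obtained by differentiating the contact condition $df\cdot\xi=0$, as a stand-in for self-adjointness of a shape operator when $f$ is not assumed to be an immersion---is exactly the standard one. Your derivation of the mutual orthogonality relations $df(X_i)\cdot df(X_j)=d\xi(X_i)\cdot d\xi(X_j)=h(X_i,X_j)=0$ for $i\neq j$, and the resulting direct-sum decomposition of the principal spaces, is clean and complete. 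Note that this direct-sum statement is precisely part~(c) of Corollary~\ref{cor:3.4.5}; in the paper's logical flow that corollary is deduced \emph{from} Theorem~\ref{thm:3.4.4} by first passing to a parallel submanifold whose spherical projection is an immersion and then invoking classical hypersurface theory, whereas you establish the decomposition intrinsically and read off the bound on the number of bad parameters $t$ as an immediate consequence.
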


As a consequence of Pinkall's theorem, one can pass to a parallel submanifold, if necessary, to obtain the following important corollary by using well known results concerning immersed hypersurfaces in $S^n$.
Note that parts (a)--(c)  of the corollary are pointwise statements, while (d)--(e) hold on an open set $U$ 
if they can be shown to hold in a neighborhood of each point of $U$.  

\begin{corollary}
\label{cor:3.4.5} 
Let $\lambda: M^{n-1} \rightarrow \Lambda^{2n-1}$ be a Legendre submanifold. Then:
\begin{enumerate}
\item[${\rm(a)}$] at each point $x \in M^{n-1}$, there are at most $n-1$ distinct curvature spheres $K_1, \ldots, K_g$,
\item[${\rm(b)}$] the principal vectors corresponding to a curvature sphere $K_i$ form a subspace $T_i$ of the tangent space $T_xM^{n-1}$,
\item[${\rm(c)}$] the tangent space $T_xM^{n-1} = T_1 \oplus \cdots \oplus T_g$,
\item[${\rm(d)}$] if the dimension of a given $T_i$ is constant on an open subset $U$ of $M^{n-1}$, then the principal distribution $T_i$ is integrable on $U$,
\item[${\rm(e)}$] if $\dim T_i = m > 1$ on an open subset $U$ of $M^{n-1}$, then the curvature sphere map $K_i$ is constant along the leaves of the principal foliation $T_i$.
\end{enumerate}
\end{corollary}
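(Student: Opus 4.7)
The plan is to reduce each part of the corollary to the classical theory of immersed oriented hypersurfaces in $S^n$, using Pinkall's parallel transformation result (Theorem~\ref{thm:3.4.4}) together with the Lie-invariance of curvature spheres and principal spaces (Theorem~\ref{thm:3.4.3}).

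For the pointwise parts (a)--(c), I would fix $x \in M^{n-1}$ and use Theorem~\ref{thm:3.4.4} to choose $t \in [0,\pi)$ so that the parallel map $f_{-t}$ is an immersion at $x$. From formulas (\ref{eq:3.4.11})--(\ref{eq:3.4.12}), the point-sphere map of $P_t \lambda$ is then $(1, f_{-t}, 0)$, so locally near $x$ the submanifold $P_t \lambda$ is the Legendre lift of the immersed oriented hypersurface $(f_{-t}, \xi_{-t})$ in $S^n$. The remark immediately following (\ref{eq:3.4.7}) shows that for the parametrization of such a Legendre lift by the point-sphere and great-sphere maps, the curvature-sphere condition (\ref{eq:3.4.7}) reduces to $r\,dY_1(X) + s\,dY_{n+3}(X) = 0$; by (\ref{eq:3.4.3}) this holds precisely when $-r/s = \cot \tau$ for some principal curvature of $f_{-t}$ at $x$, with $X$ a corresponding shape-operator eigenvector. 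Hence the curvature spheres of $P_t \lambda$ at $x$ are in bijection with the principal curvatures of $f_{-t}$, the principal subspaces coincide with the shape-operator eigenspaces (so are linear subspaces with total direct-sum decomposition of $T_xM^{n-1}$), and there are at most $n-1$ of them. Theorem~\ref{thm:3.4.3} then transfers these conclusions back from $P_t \lambda$ to $\lambda$, yielding (a), (b), and (c).

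For parts (d) and (e), I would suppose $\dim T_i$ is constant on an open set $U$, fix $x \in U$, and repeat the trick above. Since being an immersion is an open condition and the set of bad parameter values at $x$ is finite, a single choice of $t$ works on a whole neighborhood $V \subset U$ of $x$, and on $V$ the submanifold $P_t \lambda$ is the Legendre lift of an immersed oriented hypersurface. Under the identification from the previous paragraph, the hypothesis $\dim T_i = m$ on $V$ translates into the corresponding principal curvature $\kappa$ of this hypersurface having constant multiplicity $m$ on $V$. The standard facts recalled in the introduction now apply: the $\kappa$-eigendistribution is integrable on $V$, which gives (d); and if $m > 1$, both $\kappa$ and the corresponding focal point are constant along each leaf. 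By the parametrization (\ref{eq:3.4.5}), constancy of the focal point together with constancy of the radius parameter $\tau$ satisfying $\cot \tau = \kappa$ is exactly what is needed to make the corresponding curvature sphere of $P_t \lambda$ a constant point of $Q^{n+1}$ along each leaf. A final application of Theorem~\ref{thm:3.4.3} transfers this constancy back to $K_i$, giving (e).

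The main obstacle I expect is bookkeeping rather than any deep step: the parallel transformation $P_t$ shifts the signed radius of every sphere on the line $\lambda(x)$ by $t$ while preserving the line itself as a point of $\Lambda^{2n-1}$, so curvature spheres and principal subspaces transport predictably between $\lambda$ and $P_t \lambda$, but the classical shape-operator picture is only available after the shift. Once this dictionary is set up, nothing new is needed beyond Theorems~\ref{thm:3.4.3} and~\ref{thm:3.4.4} and standard results on eigenspaces of symmetric operators and focal-point behavior for immersed hypersurfaces in a space form.
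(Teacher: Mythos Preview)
Your proposal is correct and follows essentially the same approach as the paper, which does not give a detailed proof but simply remarks that one passes to a parallel submanifold via Theorem~\ref{thm:3.4.4} so that the spherical projection is an immersion, and then invokes the well-known results for immersed hypersurfaces in $S^n$, noting that (a)--(c) are pointwise while (d)--(e) need only be checked in a neighborhood of each point. Your write-up fleshes out exactly this sketch, including the use of Theorem~\ref{thm:3.4.3} to transfer the curvature-sphere and principal-space data between $\lambda$ and $P_t\lambda$.
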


\section{Dupin Submanifolds}
\label{sec:7a}

We now recall some basic concepts from the theory of Dupin hypersurfaces in $S^n$
(see, for example, \cite[pp. 9--35]{CR8} for more detail), and then generalize the notion of Dupin to 
Legendre submanifolds in Lie sphere geometry.

Let $f:M \rightarrow S^n$
be an immersed hypersurface, and let $\xi$ be a locally defined
field of unit normals to $f(M)$.
A {\em curvature surface} of $M$ is a smooth submanifold $S\subset M$
such that for each point $x \in S$, the tangent space
$T_xS$ is equal to a principal space (i.e., an eigenspace) of the shape operator
$A$ of $M$ at $x$. This generalizes the classical notion of a line of curvature 
for a principal curvature of multiplicity one.
The hypersurface $M$ is said to be {\em Dupin} if:

\begin{enumerate}
\item[(a)] along each curvature surface, the corresponding principal curvature is constant.
\end{enumerate}
Furthermore, a Dupin hypersurface $M$ is called {\em proper Dupin} if, in addition to Condition (a),
the following condition is satisfied:
\begin{enumerate}
\item[(b)] the number $g$ of distinct principal curvatures is constant on
$M$.
\end{enumerate}

Clearly isoparametric hypersurfaces in $S^n$ are proper Dupin, and so are
those hypersurfaces in ${\bf R}^n$ obtained from isoparametric hypersurfaces in $S^n$ via
stereographic projection (see, for example, \cite[pp. 28--30]{CR8}).  
In particular, the well-known ring cyclides of Dupin in ${\bf R}^3$ are obtained
in this way from a standard product torus $S^1(r) \times S^1(s)$ in $S^3$, where $r^2+s^2=1$.

Using the Codazzi equation, one can show that if a curvature surface $S$ has dimension greater than one,
then the corresponding principal curvature is constant on $S$.  This is
not necessarily true on a curvature surface of dimension equal to one
(i.e., a line of curvature).

Second, Condition (b) is equivalent to requiring that each continuous
principal curvature function 
has constant multiplicity on $M$. Further, for any hypersurface $M$ in $S^n$, there exists a
dense open subset of $M$ on which 
the number of distinct principal
curvatures is locally constant (see, for example, Singley \cite{Sin}).

It also follows from the Codazzi equation that if a continuous principal curvature function $\mu$ has constant
multiplicity $m$ on a connected open subset $U \subset M$, then $\mu$ is a smooth function on $U$, and
the distribution $T_{\mu}$ of principal
spaces corresponding to $\mu$ is a smooth foliation whose leaves are the curvature surfaces corresponding to 
$\mu$ on $U$. This principal curvature function
$\mu$ is constant along each of its curvature surfaces in $U$
if and only if these curvature surfaces are open subsets
of $m$-dimensional great or small spheres in $S^n$ (see \cite[pp. 24--32]{CR8}).  

We can generalize the notion of a curvature surface for hypersurfaces in
real space forms to Legendre submanifolds. Specifically, let
$\lambda: M^{n-1} \rightarrow \Lambda^{2n-1}$ be a Legendre submanifold. A connected submanifold
$S$ of $M^{n-1}$ is called a 
{\em curvature surface} if at each $x \in S$, the tangent space $T_xS$ is
equal to some principal space $T_i$, as in Corollary  \ref{cor:3.4.5}.  For example, if $\dim T_i$ 
is constant on an open subset $U$ of $M^{n-1}$,
then each leaf of the principal foliation $T_i$ is a curvature surface on $U$.

There exist many examples of Dupin hypersurfaces in $S^n$ or ${\bf R}^n$
that are not proper Dupin, because the number of 
distinct principal curvatures is not constant on the hypersurface.
This also results in curvature surfaces that are not leaves of a 
principal foliation.  An example due to Pinkall \cite{P4} is a
tube $M^3$ in ${\bf R}^4$ of constant radius over a torus of revolution 
$T^2 \subset {\bf R}^3 \subset {\bf R}^4$ (see also \cite[p. 69]{Cec1} for a description of Pinkall's example).

 One consequence of the results mentioned above
is that proper Dupin hypersurfaces in $S^n$ or ${\bf R}^n$
are algebraic, as is the case with isoparametric hypersurfaces,
as shown by M\"{u}nzner \cite{Mu}--\cite{Mu2}.
This result is most easily formulated for hypersurfaces in ${\bf R}^n$.  It states
that a connected proper Dupin hypersurface 
$f:M \rightarrow {\bf R}^n$ must be contained in a connected component of an 
irreducible algebraic subset of ${\bf R}^n$ of dimension $n-1$.  Pinkall \cite{P6} sent the author a letter in 1984 that contained a sketch of a proof of this result, but he did not publish a proof.  In 2008,
Cecil, Chi and Jensen \cite{CCJ3} used methods of real
algebraic geometry to give a proof of this result
based on Pinkall's sketch. The proof makes use of the various principal foliations whose leaves are open subsets of spheres to construct an analytic algebraic parametrization of a 
neighborhood of $f(x)$ for each point $x \in M$. 
In contrast to the situation for isoparametric hypersurfaces, however, a connected proper Dupin hypersurface 
does not necessarily lie in a compact connected proper Dupin hypersurface,
as Pinkall's example mentioned above of a tube $M^3$ in ${\bf R}^4$ of constant radius over a torus of revolution 
$T^2 \subset {\bf R}^3 \subset {\bf R}^4$ shows.

Next we generalize the definition of a Dupin hypersurface in a real space form to the setting
of Legendre submanifolds in Lie sphere geometry.
We say that a Legendre submanifold $\lambda: M^{n-1} \rightarrow \Lambda^{2n-1}$
is a {\em Dupin submanifold} if:

\begin{enumerate}
\item[(a)] along each curvature surface, the corresponding
curvature sphere map is constant.
\end{enumerate}
The Dupin submanifold $\lambda$ is called {\em proper Dupin} if, in addition 
to Condition (a), the following condition is satisfied:

\begin{enumerate}
\item[(b)] the number $g$ of distinct curvature spheres is constant on
$M$.
\end{enumerate}
In the case of the
Legendre lift $\lambda: M^{n-1} \rightarrow \Lambda^{2n-1}$ of an immersed
Dupin hypersurface $f:M^{n-1} \rightarrow S^n$, the submanifold $\lambda$
is a Dupin submanifold, since a curvature sphere map of $\lambda$
is constant along a curvature surface if and only if the corresponding principal curvature map of $f$
is constant along that curvature surface. Similarly, $\lambda$ is proper Dupin if and only if $f$ is proper Dupin,
since the number of distinct curvatures spheres of $\lambda$ at a point $x \in M^{n-1}$ equals the number
of distinct principal curvatures of $f$ at $x$. Particularly important examples of proper Dupin submanifolds
are the Legendre lifts of isoparametric hypersurfaces in $S^n$.

We now show that Theorem~\ref{thm:3.4.3} implies that both the Dupin and proper Dupin conditions are invariant under Lie sphere transformations.
Many important classification results for Dupin submanifolds have been obtained in the setting of
Lie sphere geometry (see Chapter 5 of  \cite{Cec1}).

\begin{theorem}
\label{thm:dupin-lie-invariant} 
Let $\lambda: M^{n-1} \rightarrow \Lambda^{2n-1}$ be a Legendre submanifold and $\beta$ a Lie sphere transformation.
\begin{enumerate}
\item[${\rm(a)}$] If $\lambda$ is Dupin, then $\beta \lambda$ is Dupin.
\item[${\rm(b)}$] If $\lambda$ is proper Dupin, then $\beta \lambda$ is proper Dupin.
\end{enumerate}
\end{theorem}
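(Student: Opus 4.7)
The plan is to derive both statements as immediate consequences of Theorem~\ref{thm:3.4.3}, exploiting the fact that $\lambda$ and $\beta\lambda$ share the same domain $M^{n-1}$, so they can be compared pointwise via the identity map on $M^{n-1}$. The key observation is that the definitions of curvature surface, the multiplicity of a curvature sphere, and the number $g$ of distinct curvature spheres at a point are all phrased purely in terms of the principal spaces $T_i \subset T_xM^{n-1}$ and of equality/distinctness of the curvature sphere points on the line $\lambda(x) \subset Q^{n+1}$; Theorem~\ref{thm:3.4.3} tells us that $\beta$ preserves all of this data.

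First I would unpack what Theorem~\ref{thm:3.4.3} gives us at each $x \in M^{n-1}$. Writing $\lambda = [Z_1,Z_{n+3}]$ and $\beta\lambda = [BZ_1, BZ_{n+3}]$ for the orthogonal transformation $B$ representing $\beta$, the theorem says that the map $[K] \mapsto \beta[K]$ is a bijection from the set of curvature spheres of $\lambda$ at $x$ onto the set of curvature spheres of $\beta\lambda$ at $x$, and that the principal space in $T_xM^{n-1}$ corresponding to $[K]$ equals the principal space corresponding to $\beta[K]$. Consequently, the principal decomposition $T_xM^{n-1} = T_1 \oplus \cdots \oplus T_g$ furnished by Corollary~\ref{cor:3.4.5} is literally the same for $\lambda$ and for $\beta\lambda$, and in particular the integer $g$ at $x$ agrees for the two submanifolds. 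This immediately settles part (b) once part (a) is established: if the number of distinct curvature spheres of $\lambda$ is the constant $g$ on $M^{n-1}$, then the number of distinct curvature spheres of $\beta\lambda$ is also $g$ at every point.

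For part (a), I would observe next that since the principal spaces at each point coincide, a connected submanifold $S \subset M^{n-1}$ is a curvature surface of $\beta\lambda$ associated to some principal space $T_i$ if and only if it is a curvature surface of $\lambda$ associated to the same $T_i$. Now suppose $\lambda$ is Dupin and let $S$ be a curvature surface of $\beta\lambda$ with curvature sphere map $x \mapsto [K'(x)]$. By the correspondence above, $S$ is also a curvature surface of $\lambda$, and its curvature sphere map for $\lambda$ is $x \mapsto \beta^{-1}[K'(x)]$. Since $\lambda$ is Dupin, this latter map is constant on $S$; applying the projective transformation $\beta$ to the constant value then shows that $[K'(x)]$ is constant on $S$, proving that $\beta\lambda$ is Dupin.

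There is no serious obstacle here beyond stating the bookkeeping carefully; the real work was already absorbed into Theorem~\ref{thm:3.4.3}. The only point worth checking with a little care is that the correspondence $[K] \leftrightarrow \beta[K]$ preserves not just the existence but also the multiplicity of each curvature sphere, so that the count $g$ of distinct curvature spheres transfers correctly in part (b); this is exactly the second clause of Theorem~\ref{thm:3.4.3} asserting equality of principal spaces, which is why no separate argument is needed.
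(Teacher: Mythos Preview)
Your proof is correct and follows essentially the same approach as the paper: both arguments invoke Theorem~\ref{thm:3.4.3} to identify the curvature spheres and principal spaces of $\lambda$ and $\beta\lambda$ pointwise, then observe that curvature surfaces therefore coincide and that constancy of a curvature sphere map along $S$ is preserved by the bijection $[K]\leftrightarrow\beta[K]$. The only cosmetic difference is that you phrase the constancy step via $\beta^{-1}$ while the paper phrases it directly, but the content is identical.
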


\begin{proof}
By Theorem~\ref{thm:3.4.3}, a point $[K]$ on the line $\lambda (x)$ is a curvature sphere of $\lambda$ at $x \in M$
if and only if the point $\beta [K]$ is a curvature sphere of $\beta \lambda$ at $x$, and the principal
spaces corresponding to $[K]$ and  $\beta [K]$ are identical.  Since these principal spaces are the same, if $S$ is
a curvature surface of $\lambda$ corresponding to a curvature sphere map $[K]$, then $S$ is also
a curvature surface of $\beta \lambda$ corresponding to a curvature sphere map $\beta [K]$, and clearly
$[K]$ is constant along $S$ if and only if $\beta [K]$ is constant along $S$.  This proves part (a) of the theorem.
Part (b) also follows immediately from Theorem~\ref{thm:3.4.3}, since for each $x \in M$, the number $g$ of
distinct curvature spheres of $\lambda$ at $x$ equals the number of distinct curvatures spheres of $\beta \lambda$
at $x$.  So if this number $g$ is constant on $M$ for $\lambda$, then it is constant on $M$ for $\beta \lambda$.
\end{proof}

\section{Lifts of Isoparametric Hypersurfaces}
\label{sec:9}
In this section, we give a Lie sphere geometric characterization of the Legendre lifts of isoparametric
hypersurfaces in the sphere $S^n$ (Theorem \ref{thm:3.5.6}).  This result has been used in several papers to
prove that under certain conditions a proper Dupin submanifold is Lie equivalent to the Legendre
lift of an isoparametric hypersurface.

Let $\lambda: M^{n-1} \rightarrow \Lambda^{2n-1}$ be an arbitrary Legendre submanifold.  As before, we can write
$\lambda = [Y_1, Y_{n+3}]$, where
\begin{equation}
\label{eq:3.5.1}
Y_1 = (1, f, 0), \quad Y_{n+3} = (0, \xi, 1),
\end{equation} 
where $f$ and $\xi$ are the spherical projection and spherical field of unit normals, 
respectively.

For $x \in M^{n-1}$, the points on the line $\lambda(x)$ can be written in the form,
\begin{equation}
\label{eq:3.5.2}
\mu Y_1 (x) +  Y_{n+3} (x),
\end{equation} 
that is, we take $\mu$ as an inhomogeneous coordinate along the projective line $\lambda(x)$. Then 
the point sphere $[Y_1]$
corresponds to $\mu = \infty$.  The next two theorems give the relationship between the coordinates of the 
curvature spheres of $\lambda$ and the 
principal curvatures of $f$, in the case where 
$f$ has constant rank. In the first
theorem, we assume that the spherical projection $f$ is an immersion on $M^{n-1}$.  By Theorem~\ref{thm:3.4.4},
we know that this can always be achieved locally by passing to a parallel submanifold.

\begin{theorem}
\label{thm:3.5.1} 
Let $\lambda: M^{n-1} \rightarrow \Lambda^{2n-1}$ be a Legendre submanifold whose spherical projection 
$f:M^{n-1} \rightarrow S^n$ is an immersion.  Let $Y_1$ and $Y_{n+3}$ be the point sphere and great sphere maps
of $\lambda$ as in equation (\ref{eq:3.5.1}).  Then the curvature spheres of $\lambda$ at a point $x \in M^{n-1}$ are
\begin{displaymath}
[K_i] = [\kappa_i Y_1 + Y_{n+3}], \quad 1 \leq i \leq g,
\end{displaymath}
where $\kappa_1,\ldots,\kappa_g$ are the distinct principal curvatures at $x$ of the oriented hypersurface $f$
with field of unit normals $\xi$.  The multiplicity of the curvature sphere $[K_i]$ equals the multiplicity
of the principal curvature $\kappa_i$.
\end{theorem}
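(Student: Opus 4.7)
The plan is to apply the projective definition of curvature spheres from equation (\ref{eq:3.4.7}) directly to the specific parametrization $Z_1 = Y_1$, $Z_{n+3} = Y_{n+3}$ given in (\ref{eq:3.5.1}). By the remark following the definition, for this particular pair the span condition in (\ref{eq:3.4.7}) simplifies to the stronger pointwise identity $r\,dY_1(X) + s\,dY_{n+3}(X) = (0,0,0)$, which is precisely what makes the computation clean.

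First I will compute the differentials. From $Y_1 = (1,f,0)$ and $Y_{n+3} = (0,\xi,1)$, one has $dY_1(X) = (0, df(X), 0)$ and $dY_{n+3}(X) = (0, d\xi(X), 0)$ for every $X \in T_xM^{n-1}$. Hence the curvature sphere condition for a candidate $[K] = [rY_1(x) + sY_{n+3}(x)]$ reduces to the single ambient equation
\begin{equation*}
r\,df(X) + s\,d\xi(X) = 0
\end{equation*}
for some nonzero $X \in T_xM^{n-1}$. Using the defining relation (\ref{eq:3.4.1}) of the shape operator, $d\xi(X) = -df(AX)$, this becomes $df(rX - sAX) = 0$. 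Because $f$ is an immersion, $df$ is injective at $x$, so the condition is equivalent to $rX = sAX$.

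Now I will split on the value of $s$. If $s=0$, then $rX = 0$ forces $X = 0$ (since $r,s$ are not both zero), contradicting $X\neq 0$; so no curvature sphere at $x$ coincides with the point sphere $[Y_1]$. If $s\neq 0$, I set $\mu = r/s$ and obtain $AX = \mu X$, so $\mu$ must be an eigenvalue of the shape operator $A$ at $x$ with eigenvector $X$. Conversely, every principal curvature $\kappa_i$ of $f$ at $x$ with a nonzero principal vector $X$ produces, by reversing this argument, a curvature sphere $[K_i] = [\kappa_i Y_1(x) + Y_{n+3}(x)]$ with $X$ among its principal vectors. Since these constructions are mutually inverse, the $g$ distinct curvature spheres of $\lambda$ at $x$ correspond bijectively to the $g$ distinct principal curvatures of $f$ at $x$, and the principal space of $[K_i]$ in the sense of Corollary \ref{cor:3.4.5} coincides as a linear subspace of $T_xM^{n-1}$ with the $\kappa_i$-eigenspace of $A$; this gives the multiplicity statement. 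There is no genuine obstacle here — the only subtlety is remembering that the special parametrization by point sphere and great sphere maps upgrades the span condition in (\ref{eq:3.4.7}) to a literal vanishing, and that the immersion hypothesis on $f$ is exactly what rules out $s=0$ (equivalently, $\mu = \infty$) as a curvature sphere.
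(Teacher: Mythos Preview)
Your proof is correct and follows essentially the same approach as the paper: both compute $d(\mu Y_1 + Y_{n+3})(X) = (0,\mu\,df(X) + d\xi(X),0)$ and reduce the curvature sphere condition to the eigenvalue equation for the shape operator via the immersion hypothesis on $f$. Your version is slightly more explicit in that you work in homogeneous coordinates $(r,s)$ and rule out the $s=0$ case separately, whereas the paper passes directly to the inhomogeneous coordinate $\mu$ from the outset.
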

\begin{proof}
Let $X$ be a nonzero vector in $T_xM^{n-1}$.  Then for any real number $\mu$,
\begin{displaymath}
d(\mu Y_1  +  Y_{n+3})(X) = (0, \mu \ df(X) + d\xi(X), 0). 
\end{displaymath}
This vector is in Span $\{Y_1(x), Y_{n+3}(x)\}$ if and only if
\begin{displaymath}
\mu \ df(X) + d\xi(X) = 0, 
\end{displaymath}
i.e., $\mu$ is a principal curvature of $f$ with corresponding principal vector $X$.
\end{proof}

We next consider the case where the point sphere map $Y_1$ is a curvature sphere of constant multiplicity $m$ on
$M^{n-1}$.  By Corollary \ref{cor:3.4.5}, the corresponding principal distribution is a foliation, and the 
curvature sphere map $[Y_1]$ is constant along the leaves of this foliation.  Thus the map $[Y_1]$ factors through
an immersion $[W_1]$ from the space of leaves
$V$ of this foliation into $Q^{n+1}$.  We can write $[W_1] = [(1, \phi, 0)]$,
where $\phi:V \rightarrow S^n$ is an immersed submanifold of codimension $m+1$. The manifold $M^{n-1}$ is locally diffeomorphic to an open subset of the unit normal 
bundle $B^{n-1}$ of the submanifold $\phi$, and $\lambda$ is essentially
the Legendre lift of $\phi (V)$, as defined in Section \ref{sec:6}.  
The following theorem relates the curvature spheres
of $\lambda$ to the principal curvatures of $\phi$.  Recall that the point sphere and great sphere maps for $\lambda$
are given as in equation (\ref{eq:3.3.3}) by
\begin{equation}
\label{eq:3.5.3}
Y_1(x,\xi) = (1, \phi(x), 0), \quad Y_{n+3}(x,\xi) = (0, \xi, 1).
\end{equation}
 
\begin{theorem}
\label{thm:3.5.2} 
Let $\lambda: B^{n-1} \rightarrow \Lambda^{2n-1}$ be the Legendre lift
of an immersed submanifold $\phi(V)$ in $S^n$
of codimension $m+1$.  Let $Y_1$ and $Y_{n+3}$ be the point sphere and great sphere maps
of $\lambda$ as in equation (\ref{eq:3.5.3}).  Then the curvature spheres of $\lambda$ at a point $(x, \xi) \in B^{n-1}$ are
\begin{displaymath}
[K_i] = [\kappa_i Y_1 + Y_{n+3}], \quad 1 \leq i \leq g,
\end{displaymath}
where $\kappa_1,\ldots,\kappa_{g-1}$ are the distinct principal curvatures of the shape operator $A_\xi$,
and $\kappa_g = \infty$.  For $1 \leq i \leq g-1$, the multiplicity of the curvature sphere $[K_i]$ equals the multiplicity
of the principal curvature $\kappa_i$, while the multiplicity of $[K_g]$ is $m$.
\end{theorem}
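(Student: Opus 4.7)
The plan is to apply the projective definition of curvature sphere (equation (\ref{eq:3.4.7})) directly to the parametrization $\lambda = [Y_1, Y_{n+3}]$ from equation (\ref{eq:3.5.3}), following the strategy of Theorem~\ref{thm:3.5.1} but with careful attention to the fact that the domain is now the unit normal bundle $B^{n-1}$ rather than the submanifold $V$ itself.  A tangent vector $T$ to $B^{n-1}$ at $(x,\xi)$ can be written as $T = (X, Z)$ arising from a curve $(x(t), \xi(t))$ with $x'(0) = X \in T_xV$ and $\xi'(0) = Z \in {\bf R}^{n+1}$, and differentiating the parametrization immediately yields $dY_1(T) = (0, d\phi(X), 0)$ and $dY_{n+3}(T) = (0, Z, 0)$.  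Since both vectors have vanishing first and last coordinates, the span condition in (\ref{eq:3.4.7}) reduces to the single middle-coordinate equation $r\, d\phi(X) + sZ = 0$.

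The first substantive step is to describe $T_{(x,\xi)}B^{n-1}$ more precisely.  By differentiating the defining conditions of $B^{n-1}$, namely $\phi(x(t))\cdot\xi(t)=0$, $\xi(t)\cdot\xi(t)=1$, and $d\phi_{x(t)}(Y)\cdot\xi(t)=0$ for all $Y$, one sees that $Z$ must be orthogonal to both $\phi(x)$ and $\xi$, and moreover the tangential component of $Z$ in $d\phi(T_xV)$ is forced by the Weingarten-type definition of the shape operator to equal $-d\phi(A_\xi X)$, while the normal component of $Z$ in $N_xV \cap \xi^\perp$ remains free.  Thus a tangent vector to $B^{n-1}$ at $(x,\xi)$ is specified by a pair $(X, \eta)$ with $X \in T_xV$ and $\eta \in N_xV \cap \xi^\perp$, with the corresponding $Z = -d\phi(A_\xi X) + \eta$.

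With this structural description in hand, I would split into two cases.  If $s \neq 0$, set $\kappa = r/s$; the equation $Z = -\kappa\, d\phi(X)$ forces $\eta = 0$ and $A_\xi X = \kappa X$.  Hence $[\kappa Y_1 + Y_{n+3}]$ is a curvature sphere at $(x,\xi)$ with principal vector $(X, 0)$ precisely when $\kappa$ is an eigenvalue of $A_\xi$ and $X$ a corresponding eigenvector, and the principal space in $T_{(x,\xi)}B^{n-1}$ is isomorphic to the $\kappa$-eigenspace of $A_\xi$ in $T_xV$, giving matching multiplicities.  If $s = 0$, then $r \neq 0$ and the equation forces $d\phi(X) = 0$, hence $X = 0$; the admissible nonzero tangent vectors are then exactly those of the form $(0, \eta)$ with $\eta \in N_xV \cap \xi^\perp$, a vector space of dimension $m$.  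This yields the final curvature sphere $[K_g] = [Y_1]$, corresponding to the symbolic value $\kappa_g = \infty$, with multiplicity $m$.

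The main technical obstacle will be establishing the decomposition of $T_{(x,\xi)}B^{n-1}$ in terms of $(X,\eta)$ via the shape operator; this requires carefully differentiating the normal-bundle defining equations and invoking the identity that the tangential part of $d\xi(X)$ equals $-d\phi(A_\xi X)$.  Once that structural fact is in place, the remaining analysis is essentially linear algebra, and the enumeration of the $g$ curvature spheres together with their multiplicities follows at once.
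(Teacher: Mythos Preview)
Your proposal is correct and follows essentially the same approach that the paper indicates: it mirrors the proof of Theorem~\ref{thm:3.5.1}, with the added ingredient of a careful description of the tangent space $T_{(x,\xi)}B^{n-1}$ to the unit normal bundle. The paper itself does not write out the argument but refers to \cite[p.~74]{Cec1}, noting that one must introduce local coordinates on $B^{n-1}$; your coordinate-free decomposition $Z = -d\phi(A_\xi X) + \eta$ with $\eta \in N_xV \cap \xi^\perp$ accomplishes the same thing and leads to the identical case analysis.
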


The proof of this theorem is similar to that of Theorem \ref{thm:3.5.1}, but one must introduce local coordinates on the unit normal bundle to get a complete proof (see \cite[p. 74]{Cec1}).

We close this section with a local Lie geometric characterization of 
Legendre submanifolds that are Lie equivalent
to the Legendre lift of
an isoparametric hypersurface in $S^n$ (see
 \cite{Cec4} or \cite[p. 77]{Cec1}).  Here a line in ${\bf P}^{n+2}$ is called 
{\em timelike} if it contains only timelike points.
This means that an orthonormal basis for the 2-plane in ${\bf R}^{n+3}_2$ determined by the timelike line consists
of two timelike vectors.  An example is the line $[e_1, e_{n+3}]$.  This theorem has been useful in obtaining several classification results for proper Dupin hypersurfaces.

\begin{theorem}
\label{thm:3.5.6} 
Let $\lambda: M^{n-1} \rightarrow \Lambda^{2n-1}$ be a Legendre submanifold with $g$ distinct curvature spheres
$[K_1],\ldots,[K_g]$ at each point.  Then $\lambda$ is Lie equivalent 
to the Legendre lift of an isoparametric
hypersurface in $S^n$ if and only if there exist $g$ points $[P_1],\ldots,[P_g]$ on a timelike line in ${\bf P}^{n+2}$
such that
\begin{displaymath}
\langle K_i,P_i \rangle = 0, \quad 1 \leq i \leq g.
\end{displaymath}
\end{theorem}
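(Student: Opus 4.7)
The plan is to prove both implications by exploiting Theorem~\ref{thm:3.5.1}, which in the case when the spherical projection is an immersion expresses every curvature sphere as $[\kappa_i Y_1 + Y_{n+3}]$ with $\kappa_i$ a principal curvature.

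For the forward direction (necessity), I first exhibit the configuration explicitly on a model. Let $\tilde\lambda$ be the Legendre lift of an isoparametric hypersurface in $S^n$ with constant principal curvatures $\tilde\kappa_i = \cot t_i$, $t_i \in (0,\pi)$. By Theorem~\ref{thm:3.5.1}, rewrite $[\tilde K_i] = [\cos t_i\, \tilde Y_1 + \sin t_i\, \tilde Y_{n+3}]$ using $\tilde Y_1 = (1,f,0)$, $\tilde Y_{n+3} = (0,\xi,1)$. Set $[\tilde P_i] = [\sin t_i\, e_1 - \cos t_i\, e_{n+3}]$; these $g$ points lie on the standard timelike line $[e_1, e_{n+3}]$ (timelike because both $e_1$ and $e_{n+3}$ are timelike in $\mathbf{R}^{n+3}_2$). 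A short computation using $\langle \tilde Y_1, e_1 \rangle = -1$, $\langle \tilde Y_{n+3}, e_{n+3} \rangle = -1$, and the vanishing of the remaining mixed products gives $\langle \tilde K_i, \tilde P_i \rangle = -\cos t_i \sin t_i + \sin t_i \cos t_i = 0$. If now $\tilde\lambda = \beta \lambda$ for a Lie sphere transformation $\beta = P(B)$, then by Theorem~\ref{thm:3.4.3} the $[K_i] = \beta^{-1}[\tilde K_i]$ are the curvature spheres of $\lambda$, and the points $[P_i] = \beta^{-1}[\tilde P_i]$ lie on the timelike line $\beta^{-1}[e_1, e_{n+3}]$ (timelike because $B^{-1} \in O(n+1,2)$ preserves the signature of subspaces), with $\langle K_i, P_i \rangle = \langle \tilde K_i, \tilde P_i \rangle = 0$ by orthogonality of $B$.

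For the converse, I reverse this normalization. Given the $[P_i]$ on a timelike line $\ell$, I use the transitive action of $O(n+1,2)$ on negative-definite $2$-planes of $\mathbf{R}^{n+3}_2$ (a standard consequence of Witt's extension theorem) to find a Lie sphere transformation $\beta$ carrying $\ell$ to $[e_1, e_{n+3}]$. After replacing $\lambda$ by $\beta\lambda$, I may assume $[P_i] = [a_i e_1 + b_i e_{n+3}]$ for constants $(a_i, b_i) \neq (0,0)$. Working in a neighborhood of an arbitrary point, Theorem~\ref{thm:3.4.4} provides a parallel transformation $P_s$ for which the spherical projection of $P_s\lambda$ is an immersion; crucially, the matrix formulas (\ref{eq:3.4.9}) show that $P_s$ leaves the $2$-plane $\mathrm{Span}\{e_1, e_{n+3}\}$ invariant, so the $[P_i]$ still lie on the standard timelike line after this further transformation. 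Theorem~\ref{thm:3.5.1} then gives $[K_i] = [\kappa_i(x) Y_1(x) + Y_{n+3}(x)]$, and the hypothesis $\langle K_i, P_i \rangle = 0$ reduces to $-a_i \kappa_i(x) - b_i = 0$, forcing $\kappa_i(x) \equiv -b_i/a_i$ constant. (The case $a_i = 0$ is excluded: it would require $-b_i = 0$ as well, contradicting $[P_i]$ being a projective point.) Since all $g$ principal curvatures are constant, the spherical projection is isoparametric, so $P_s \beta \lambda$ is the Legendre lift of an isoparametric hypersurface, i.e., $\lambda$ is Lie equivalent to such a lift.

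The main obstacle is the converse direction, specifically the simultaneous management of the two Lie sphere transformations $\beta$ (normalizing the timelike line) and $P_s$ (making the spherical projection an immersion). The key point making this work is the compatibility observation that $P_s$ preserves $[e_1, e_{n+3}]$, so the two normalizations do not interfere. A subsidiary technical point is the transitivity of $O(n+1,2)$ on timelike $2$-planes, which one either invokes from Witt's theorem or proves by hand via Gram--Schmidt on a negative-definite basis. Finally, the argument is local (a new $s$ may be needed near points where the previous spherical projection degenerates), but constancy of each $\kappa_i$ on each neighborhood of immersion immediately globalizes by the smoothness and connectedness of $M^{n-1}$, since the value $-b_i/a_i$ is determined by the fixed point $[P_i]$.
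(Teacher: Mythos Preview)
Your proof is correct and follows essentially the same strategy as the paper's: in both directions you normalize the timelike line to $[e_1,e_{n+3}]$ and exploit the fact that orthogonality to a fixed point on this line forces the curvature spheres to be ``constant'' in the appropriate sense. The one substantive difference is in the converse: the paper reads off directly from formula~(\ref{eq:1.4.4}) that $\langle K_i, a_i e_1+b_i e_{n+3}\rangle=0$ means the spherical radius of $[K_i]$ is globally constant, and only \emph{then} applies a single parallel transformation $P_t$ to make all radii nonzero (so the spherical projection is an immersion everywhere at once). You instead first apply $P_s$ locally to get an immersion and then invoke Theorem~\ref{thm:3.5.1}; this forces you into a local-to-global patching argument that, as written, is slightly imprecise (after $P_s$ the coefficients $(a_i,b_i)$ become $(a_i\cos s-b_i\sin s,\,a_i\sin s+b_i\cos s)$, so the constant $-b_i/a_i$ you cite is not literally the same across neighborhoods with different $s$). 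The cleanest fix is exactly the paper's observation: the orthogonality condition already pins down the radius of $[K_i]$ globally, so a single $s$ works on all of $M^{n-1}$ and no patching is needed.
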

\begin{proof}
If $\lambda$ is the Legendre lift of an 
isoparametric hypersurface in $S^n$, then all the spheres in a family
$[K_i]$ have the same radius 
$\rho_i$, where $0 < \rho_i < \pi$.  By formula (\ref{eq:1.4.4}),
this is equivalent to the condition $\langle K_i,P_i \rangle = 0$, where
\begin{equation}
\label{eq:3.5.7}
P_i = \sin \rho_i \ e_1 - \cos \rho_i \ e_{n+3}, \quad 1 \leq i \leq g,
\end{equation}
are $g$ points on the timelike line $[e_1, e_{n+3}]$ (see \cite[pp. 17--18]{Cec1}).  
Since a Lie sphere transformation preserves curvature spheres,
timelike lines and the polarity relationship, the same is true for any image of $\lambda$
under a Lie sphere transformation.

Conversely, suppose that there exist $g$ points $[P_1],\ldots,[P_g]$ on a timelike line $\ell$ such that
$\langle K_i,P_i \rangle = 0$, for $1 \leq i \leq g$.
Let $\beta$ be a Lie sphere transformation that maps $\ell$
to the line $[e_1, e_{n+3}]$.  Then the curvature spheres $\beta [K_i]$ of $\beta \lambda$ are 
orthogonal to the points $[Q_i] = \beta [P_i]$ on the line $[e_1, e_{n+3}]$.  
By (\ref{eq:1.4.4}), this means that the spheres corresponding
to $\beta [K_i]$ have constant radius on $M^{n-1}$.  By applying a 
parallel transformation $P_t$, if necessary, we can arrange 
that none of these curvature spheres has radius zero. 
Then $P_t \beta \lambda$ is the Legendre lift of an isoparametric hypersurface in $S^n$.
\end{proof}

\section{Cyclides of Dupin}
\label{sec:10}
The classical cyclides of Dupin in ${\bf R}^3$ were studied intensively by many leading
mathematicians in the nineteenth century, including Liouville \cite{Lio}, 
Cayley \cite{Cay}, and Maxwell \cite{Max}, 
whose paper contains stereoscopic
figures of the various types of cyclides. 
A good account of the history of the cyclides in the nineteenth century is given by
Lilienthal \cite{Lil} (see also
Klein \cite[pp. 56--58]{Klein}, Darboux \cite[vol. 2, pp. 267--269]{Darboux},
Blaschke \cite[p. 238]{Bl}, Eisenhart \cite[pp. 312--314]{Eisenhart}, 
Hilbert and Cohn-Vossen \cite[pp. 217--219]{HC-V}, Fladt and Baur \cite[pp. 354--379]{FB}, Banchoff \cite{Ban1},
and Cecil and Ryan \cite[pp. 151--166]{CR7}).

We now turn our attention to Pinkall's classification of the cyclides of Dupin of arbitrary dimension, 
which is obtained by using the methods of Lie sphere geometry.
Our presentation here is based on the accounts of this subject given in
 \cite[pp. 148--159]{Cec1} and \cite[pp. 263--283]{CR8}.
A proper Dupin submanifold $\lambda: M^{n-1} \rightarrow \Lambda^{2n-1}$ with two distinct curvature spheres
of respective multiplicities $p$ and $q$ at each point is called a
{\em cyclide of Dupin of characteristic} $(p,q)$.  

We will prove that any connected cyclide of Dupin of
characteristic $(p,q)$ is contained in a unique compact, connected cyclide of Dupin of
characteristic $(p,q)$.  Furthermore, every compact, connected cyclide of Dupin of
characteristic $(p,q)$ is Lie equivalent to the Legendre lift of a standard 
product of two spheres,
\begin{equation}
\label{eq:std-prod-p-q}
S^q (1/\sqrt{2}) \times S^p (1/\sqrt{2}) \subset S^n \subset {\bf R}^{q+1} \times {\bf R}^{p+1} = {\bf R}^{n+1}, 
\end{equation} 
where $p$ and $q$ are positive integers such that $p+q = n-1$. Thus any two compact,
connected cyclides of Dupin of the same characteristic are Lie equivalent.

It is well known that the product $S^q (1/\sqrt{2}) \times S^p (1/\sqrt{2})$ is an isoparametric hypersurface
in $S^n$ with two distinct principal curvatures having multiplicities $m_1 = p$ and $m_2 = q$ (see, for
example, \cite[pp. 110--111]{CR8}).  
Furthermore, every compact
isoparametric hypersurface in $S^n$ with two principal curvatures of multiplicities $p$ and $q$ is Lie equivalent
to $S^q (1/\sqrt{2}) \times S^p (1/\sqrt{2})$, since it is congruent to a parallel hypersurface of 
$S^q (1/\sqrt{2}) \times S^p (1/\sqrt{2})$.

Although $S^q (1/\sqrt{2}) \times S^p (1/\sqrt{2})$ is a good model for the cyclides, it is often easier
to work with the two focal submanifolds $S^q (1)\times \{0\}$ and $\{0\} \times S^p (1)$ in proving classification
results.  The Legendre lifts of these two focal submanifolds are Lie equivalent to the Legendre lift of
$S^q (1/\sqrt{2}) \times S^p (1/\sqrt{2})$, since they are parallel submanifolds of the Legendre lift of
$S^q (1/\sqrt{2}) \times S^p (1/\sqrt{2})$.  In fact, the hypersurface $S^q (1/\sqrt{2}) \times S^p (1/\sqrt{2})$
is a tube of spherical radius $\pi/4$ in $S^n$ over either of its two focal submanifolds.

We now describe our standard model of a cyclide of characteristic $(p,q)$ in the context
of Lie sphere geometry, as in Pinkall's paper \cite{P4} (see also \cite[p. 149]{Cec1}).
Let $\{e_1,\ldots,e_{n+3} \}$ be the standard orthonormal basis for ${\bf R}^{n+3}_2$,
with $e_1$ and $e_{n+3}$ unit timelike vectors, and $\{e_2,\ldots,e_{n+2} \}$ unit spacelike vectors.
Then $S^n$ is the unit sphere in the Euclidean space ${\bf R}^{n+1}$ spanned by $\{e_2,\ldots,e_{n+2} \}$.
Let
\begin{equation}
\label{eq:4.3.1}
\Omega = {\mbox {\rm Span }}\{e_1,\ldots,e_{q+2} \}, \quad \Omega^{\perp} = {\mbox {\rm Span }}\{e_{q+3},\ldots,e_{n+3} \}.
\end{equation}
These spaces have signatures
$(q+1,1)$ and $(p+1,1)$, respectively.  The intersection $\Omega \cap Q^{n+1}$
is the quadric given in homogeneous coordinates by 
\begin{equation}
\label{eq:4.3.1a}
x_1^2 = x_2^2 + \cdots + x_{q+2}^2, \quad x_{q+3} = \cdots = x_{n+3} = 0.
\end{equation}
This set is diffeomorphic to the unit sphere $S^q$ in
\begin{displaymath}
{\bf R}^{q+1} = {\mbox {\rm Span }}\{e_2,\ldots,e_{q+2} \},
\end{displaymath}
by the diffeomorphism $\phi : S^q \rightarrow \Omega \cap Q^{n+1}$, defined by $\phi (v) = [e_1 + v]$.  

Similarly, the quadric $\Omega^{\perp} \cap Q^{n+1}$ is diffeomorphic to the unit sphere $S^p$ in
\begin{displaymath}
{\bf R}^{p+1} = {\mbox {\rm Span }}\{e_{q+3},\ldots,e_{n+2} \}
\end{displaymath}
by the diffeomorphism $\psi : S^p \rightarrow \Omega^{\perp} \cap Q^{n+1}$ defined by $\psi (u) = [u+e_{n+3}]$.  

The model that we will use for the cyclides in Lie sphere geometry is the Legendre
submanifold $\lambda : S^p \times S^q \rightarrow \Lambda^{2n-1}$ defined by 
\begin{equation}
\label{eq:4.3.2}
\lambda (u,v) = [k_1,k_2], {\mbox {\rm with }} [k_1 (u,v)] = [\phi(v)], \quad [k_2 (u,v)] = [\psi(u)].
\end{equation}
It is easy to check that the Legendre Conditions $(1)$--$(3)$ of Theorem \ref{thm:3.2.3} 
are satisfied by the pair $\{ k_1,k_2 \}$.  To find
the curvature spheres of $\lambda$, we decompose the tangent space to $S^p \times S^q$ at a point $(u,v)$ as
\begin{displaymath}
T_{(u,v)} S^p \times S^q = T_u S^p \times T_v S^q.
\end{displaymath}
Then $dk_1 (X,0) = 0$ for all $X \in T_u S^p$, and $dk_2 (Y) = 0$ for all $Y$ in $T_v S^q$. 
Thus, $[k_1]$ and
$[k_2]$ are curvature spheres of $\lambda$ with respective multiplicities $p$ and $q$.  Furthermore, the image
of $[k_1]$ lies in the quadric $\Omega \cap Q^{n+1}$, and the image of $[k_2]$ is contained in 
the quadric $\Omega^{\perp} \cap Q^{n+1}$.
The point sphere map of $\lambda$ is $[k_1]$, and thus $\lambda$ is the Legendre lift of the focal submanifold
$S^q \times \{0\} \subset S^n$, considered as a submanifold of codimension $p+1$ in $S^n$.  As noted above, this Legendre lift $\lambda$ of the focal submanifold is Lie equivalent to the
Legendre lift of the standard product of spheres by means of a parallel transformation.

We now prove Pinkall's \cite{P4} 
classification of proper Dupin submanifolds with two distinct curvature spheres 
at each point.
Pinkall's proof depends on establishing the existence of a local principal coordinate system.
This can always be done in the case of $g=2$ curvature spheres, because the 
the sum of the dimensions of the two principal spaces is $n-1$, the dimension of $M^{n-1}$
(see, for example,  \cite[p. 249]{CR8}).   Such a coordinate system might not exist in the case $g>2$.
In fact, if $M$ is an isoparametric hypersurface in $S^n$
with more than two distinct principal curvatures, then there cannot exist a
local principal coordinate system on $M$ (see, for example, \cite[pp. 180--184]{CR7} or
\cite[pp. 248--252]{CR8}).

For a different proof of Pinkall's theorem (Theorem 10.1 below) using the method of moving frames, see the paper of Cecil-Chern \cite{CC2} or \cite[pp. 266--273]{CR8}.
That approach generalizes to the study of proper Dupin hypersurfaces with $g>2$ curvature spheres (see, for example, Cecil and Jensen \cite{CJ2}, \cite{CJ3}).

Note that before Pinkall's paper, Cecil and Ryan \cite{CR2}--\cite{CR5} (see also \cite[pp. 166--179]{CR7})
proved a classification of complete cyclides in $ {\bf R}^n$ using techniques of Euclidean submanifold theory.  However, the proof used the assumption of completeness in an essential way, and that theorem did not contain part (a) of Pinkall's Theorem 10.1 below.

\begin{theorem}
\label{thm:4.3.1}
\begin{enumerate}
\item[${\rm(a)}$] Every connected cyclide of Dupin of characteristic $(p,q)$ 
is contained in a unique compact, connected cyclide of Dupin characteristic $(p,q)$.
\item[${\rm(b)}$] Every compact, connected cyclide of Dupin of characteristic $(p,q)$ is Lie equivalent to the Legendre lift of a standard product of two spheres
\begin{equation}
\label{eq:std-prod-p-q-2}
S^q (1/\sqrt{2}) \times S^p (1/\sqrt{2}) \subset S^n \subset {\bf R}^{q+1} \times {\bf R}^{p+1} = {\bf R}^{n+1}, 
\end{equation} 
where $p+q = n-1$.  Thus, any two compact, connected cyclides of Dupin of characteristic $(p,q)$ are
Lie equivalent.
\end{enumerate}
\end{theorem}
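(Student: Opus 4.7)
The plan is to follow Pinkall's approach, which exploits the fact that when $g=2$ we have $\dim T_1 + \dim T_2 = p+q = n-1$, so the two principal foliations admit a joint adapted coordinate system. First, by Corollary~\ref{cor:3.4.5}, the principal distributions $T_1$ and $T_2$ are smooth integrable foliations of dimensions $p$ and $q$, and each curvature sphere map $[K_i]$ is constant along the leaves of $T_i$ (when a multiplicity equals one, this constancy uses the Dupin hypothesis directly rather than the Codazzi-based argument). Since the two distributions are complementary, local coordinates $(x_1,\ldots,x_p,y_1,\ldots,y_q)$ adapted to both foliations exist, and the curvature sphere maps descend to maps $K_1(y)$ and $K_2(x)$ into $Q^{n+1}$; these descended maps are immersions, since the rank of $d[K_i]$ on $M^{n-1}$ is $(n-1)-m_i$, which equals the dimension of the corresponding leaf space.

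Next I would extract global linear-algebraic data. Since both $[K_1(y)]$ and $[K_2(x)]$ lie on the line $\lambda(x,y)\subset Q^{n+1}$, the contact condition gives $\langle K_1(y),K_2(x)\rangle=0$ for all $x,y$. Differentiating in each coordinate forces every derivative of $K_1$ to be orthogonal to every $K_2(x)$, so the linear span $V_1=\mathrm{span}\{K_1(y)\}$ is orthogonal in $\mathbf{R}^{n+3}_2$ to $V_2=\mathrm{span}\{K_2(x)\}$. The immersion properties of $[K_1]$ and $[K_2]$, together with the fact that $Q^{n+1}$ contains no linear subspaces of projective dimension greater than one, force $\dim V_1 \ge q+2$ and $\dim V_2 \ge p+2$. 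Combined with $V_1\perp V_2$ and $\dim\mathbf{R}^{n+3}_2=n+3=p+q+4$, equality must hold and $V_2=V_1^{\perp}$. One then verifies that the induced scalar products on $V_1$ and $V_2$ have Lorentz signatures $(q+1,1)$ and $(p+1,1)$, matching the standard pair $(\Omega,\Omega^{\perp})$ of equation~(\ref{eq:4.3.1}).

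With the algebra in place, an orthogonal transformation $B\in O(n+1,2)$ sends $(V_1,V_2)$ to $(\Omega,\Omega^{\perp})$, and the induced Lie sphere transformation $\beta=P(B)$ takes $\lambda$ to a Legendre submanifold whose first curvature sphere map lands in $\Omega\cap Q^{n+1}\cong S^q$ and whose second lands in $\Omega^{\perp}\cap Q^{n+1}\cong S^p$. Each such map is an immersion from a manifold of dimension equal to its target sphere, hence a local diffeomorphism onto an open subset, and extends canonically to the full sphere. Assembling the extended pair yields the standard Legendre submanifold of equation~(\ref{eq:4.3.2}), proving~(b); a parallel transformation then produces the Legendre lift of $S^q(1/\sqrt{2})\times S^p(1/\sqrt{2})$. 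For~(a), uniqueness of the compact extension follows because $V_1$ and $V_2$ are determined by the restriction of $\lambda$ to any open piece, so two compact cyclides of characteristic $(p,q)$ containing a common connected cyclide must be Lie equivalent to the same standard model via compatible transformations, forcing agreement on the overlap.

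The main obstacle will be the dimension count and signature determination for $V_1$ and $V_2$. Showing $\dim V_1 \ge q+2$ requires ruling out that the $q$-dimensional image of $[K_1]$ lies in a projective subspace of projective dimension $q$; one invokes the structure of totally isotropic subspaces of $\mathbf{R}^{n+3}_2$ (whose maximal dimension is two, since the signature is $(n+1,2)$) together with the nondegeneracy of the induced quadric on $V_1$. Controlling the signature of $V_1$, which is what guarantees that $\Omega\cap Q^{n+1}$ is genuinely a round $S^q$ rather than some other quadric hypersurface, is the second delicate point and requires tracking how $V_1$ sits relative to the light cone of $\mathbf{R}^{n+3}_2$.
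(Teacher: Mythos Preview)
Your proposal is correct and follows essentially the same route as the paper: principal coordinates adapted to the two foliations, the orthogonality $\langle K_1(y),K_2(x)\rangle=0$ for all pairs, a dimension count forcing the spans $V_1,V_2$ to be complementary subspaces of $\mathbf{R}^{n+3}_2$ with Lorentz signatures $(q+1,1)$ and $(p+1,1)$, and then an element of $O(n+1,2)$ carrying $(V_1,V_2)$ to the standard pair $(\Omega,\Omega^\perp)$. The only notable difference is in the dimension argument: the paper bounds $\dim E$ from above and below symmetrically (if $\dim E>q+1$ then $\dim E^\perp\le p$ is too small to contain the $p$-dimensional image $[k_2(W)]$, and conversely), which is slightly cleaner than your plan to argue $\dim V_1\ge q+2$ directly via the bound on totally isotropic subspaces---though both ultimately rest on the same fact that null planes in $\mathbf{R}^{n+3}_2$ have dimension at most two.
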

\begin{proof}
Suppose that $\lambda: M^{n-1} \rightarrow \Lambda^{2n-1}$ is a connected
cyclide of Dupin of characteristic $(p,q)$ with $p+q = n-1$.  We may take $\lambda = [k_1, k_2]$, where $[k_1]$
and $[k_2]$ are the curvature spheres with respective multiplicities $p$ and $q$.  Each curvature sphere map
factors through an immersion of the space of leaves of its principal foliation $T_i$ for $i = 1,2$.  Since the sum of
the dimensions of  $T_1$ and $T_2$ equals the dimension of $M^{n-1}$, locally we
can take a principal coordinate system $(u,v)$ (see, for example,  \cite[p. 249]{CR8}) defined on an open set
\begin{displaymath}
W = U \times V \subset {\bf R}^p \times {\bf R}^q,
\end{displaymath} 
such that\\

\noindent
(i) $[k_1]$ depends only on $v$, and $[k_2]$ depends only on $u$, for all $(u,v) \in W$.\\

\noindent
(ii) $[k_1(W)]$ and $[k_2(W)]$ are submanifolds of $Q^{n+1}$ of dimensions $q$ and $p$, respectively.\\

\noindent
Now let $(u,v)$ and $(\bar{u},\bar{v})$ be any two points in $W$.  From (i), we have
the following key equation,
\begin{equation}
\label{eq:4.3.3}
\langle k_1(u,v), k_2 (\bar{u},\bar{v}) \rangle  = \langle k_1(v), k_2 (\bar{u}) \rangle =
\langle k_1(\bar{u},v), k_2 (\bar{u},v) \rangle = 0,
\end{equation}
since $[k_1]$ and $[k_2]$ are orthogonal at every point $x \in M^{n-1}$, in particular $x = (\bar{u},v)$.

Let $E$ be the smallest linear subspace of ${\bf P}^{n+2}$ containing the $q$-dimensional submanifold 
$[k_1(W)]$.
By equation (\ref{eq:4.3.3}), we have
\begin{equation}
\label{eq:4.3.4}
[k_1(W)] \subset E \cap Q^{n+1}, \quad [k_2(W)] \subset E^{\perp} \cap Q^{n+1}.
\end{equation}
The dimensions of $E$ and $E^{\perp}$ as subspaces of ${\bf P}^{n+2}$ satisfy
\begin{equation}
\label{eq:4.3.5}
\dim E + \dim E^{\perp} = n+1 = p+q+2.
\end{equation}
We claim that $\dim E = q+1$ and $\dim E^{\perp} = p+1$.  

To see this, suppose first that $\dim E > q+1$.
Then $\dim E^{\perp} \leq p$, and $E^{\perp} \cap Q^{n+1}$ cannot contain the $p$-dimensional 
submanifold $k_2 (W)$, contradicting equation (\ref{eq:4.3.4}).
Similarly, assuming that $\dim E^{\perp} > p+1$ leads to a contradiction, since  
then $\dim E \leq q$, and $E \cap Q^{n+1}$ cannot contain the $q$-dimensional 
submanifold $k_1 (W)$.

Thus we have
\begin{displaymath}
\dim E \leq q+1, \quad \dim E^{\perp} \leq p+1.
\end{displaymath}
This and equation (\ref{eq:4.3.5}) imply that $\dim E = q+1$ and $\dim E^{\perp} = p+1$.
Furthermore, from the fact that $E \cap Q^{n+1}$ and $E^{\perp} \cap Q^{n+1}$ contain submanifolds of dimensions
$q$ and $p$, respectively, it is easy to deduce that the Lie inner product $\langle \  ,\  \rangle$ has signature
$(q+1,1)$ on $E$ and $(p+1,1)$ on $E^{\perp}$.  

Take an orthornormal basis $\{w_1,\ldots,w_{n+3}\}$ of ${\bf R}^{n+3}_2$ with $w_1$ and $w_{n+3}$
timelike such that 
\begin{equation}
\label{eq:CC-4.31d}
E = {\mbox {\rm Span }}\{w_1,\ldots,w_{q+2}\}, \quad E^{\perp} = 
{\mbox {\rm Span }}\{w_{q+3},\ldots,w_{n+3} \}.
\end{equation}
Then $E \cap Q^{n+1}$ is given in homogeneous coordinates $( x_1,\ldots,x_{n+3})$ with respect to this basis by
\begin{equation}
\label{eq:CC-4.32}
x_1^2 = x_2^2 + \cdots + x_{q+2}^2, \quad  x_{q+3} = \cdots = x_{n+3} = 0.
\end{equation}
This quadric is diffeomorphic to the unit sphere $S^q$ in the span ${\bf R}^{q+1}$ of the spacelike vectors 
$w_2,\ldots,w_{q+2}$ with the diffeomorphism $\gamma: S^q \rightarrow E \cap Q^{n+1}$ given by
\begin{equation}
\label{eq:CC-4.33}
\gamma (v) = [w_1 + v], \quad v \in S^q.
\end{equation}
Similarly $E^{\perp} \cap Q^{n+1}$ is the quadric given in homogeneous coordinates by
\begin{equation}
\label{eq:CC-4.34}
x_{n+3}^2 = x_{q+3}^2 + \cdots + x_{n+2}^2, \quad  x_1 = \cdots = x_{q+2} = 0.
\end{equation}
This space $E^{\perp} \cap Q^{n+1}$ is diffeomorphic to the unit sphere $S^p$ in the span
${\bf R}^{p+1}$ of the spacelike vectors $w_{q+3},\ldots,w_{n+2}$ with the 
diffeomorphism 
\begin{displaymath}
\delta: S^p \rightarrow E^{\perp} \cap Q^{n+1}
\end{displaymath}
given by
\begin{equation}
\label{eq:CC-4.35}
\delta (u) = [u + w_{n+3}], \quad u \in S^p.
\end{equation}

The image of the curvature sphere map $k_1$ of multiplicity $p$
is contained in the $q$-dimensional quadric $E \cap Q^{n+1}$ given by
equation (\ref{eq:CC-4.32}), which is diffeomorphic to $S^q$.  
The map $k_1$ is constant on each leaf of its principal foliation $T_1$,
and so $k_1$ factors through an immersion of the $q$-dimensional space of leaves $W/T_1$ into the 
$q$-dimensional quadric $E \cap Q^{n+1}$.
Hence, the image of $k_1$ is an open subset of this quadric, and each leaf of $T_1$ corresponds to a point
$\gamma (v)$ of the quadric.
 
Similarly, the curvature sphere map $k_2$ of multiplicity $q$ factors through an immersion
of its $p$-dimensional space of leaves $W/T_2$ onto an open subset of the $p$-dimensional
quadric $E^{\perp} \cap Q^{n+1}$ given by 
equation (\ref{eq:CC-4.34}), and each leaf of $T_2$ corresponds to a point
of $\delta(u)$ of that quadric.  

From this it is clear that the restriction of 
the Legendre map $\lambda$ to the neighborhood $W \subset M$ is contained
in the compact, connected cyclide 
\begin{displaymath}
\nu : S^p \times S^q \rightarrow \Lambda^{2n-1}
\end{displaymath}
defined by
\begin{equation}
\label{eq:CC-4.35a}
\nu (u,v) = [k_1 (u,v), k_2 (u,v)], \quad (u,v) \in S^p \times S^q,
\end{equation}
where
\begin{equation}
\label{eq:CC-4.35b}
k_1 (u,v) = \gamma (v), \quad k_2 (u,v) = \delta (u),
\end{equation}
for the maps $\gamma$ and $\delta$ defined above. By a standard connectedness argument,
the Legendre map $\lambda: M \rightarrow \Lambda^{2n-1}$ is 
also the restriction of the compact, connected cyclide $\nu$ to an open subset of $S^p \times S^q$.
This proves part (a) of the theorem.

In projective space ${\bf P}^{n+2}$, 
the image of $\nu$ consists of all lines joining a point on the quadric $E \cap Q^{n+1}$ in equation (\ref{eq:CC-4.32}) to a point on the quadric $E^{\perp} \cap Q^{n+1}$ in equation (\ref{eq:CC-4.34}). 
Thus any choice of a $(q+1)$-plane $E$ in 
${\bf P}^{n+2}$ with signature $(q+1,1)$ and corresponding orthogonal complement $E^{\perp}$
with signature $(p+1,1)$ determines a unique compact, connected cyclide of characteristic $(p,q)$ and
vice-versa. 

The Lie equivalence of any two compact, connected cyclides of the same characteristic stated in 
part (b) of the theorem is then clear,
since given any two choices $E$ and $F$ of $(q+1)$-planes in 
${\bf P}^{n+2}$ with signature $(q+1,1)$, there is a Lie sphere transformation that maps $E$ to $F$ and
$E^{\perp}$ to $F^{\perp}$.

In particular, if we take $F$ to be the space $\Omega$ in equation (\ref{eq:4.3.1}), then the corresponding 
cyclide is our standard model.  So our given compact, connected cyclide $\nu$ 
in equation (\ref{eq:CC-4.35a}) is Lie equivalent to our standard model.
As noted before the statement of Theorem \ref{thm:4.3.1}, our standard model is Lie equivalent to the
Legendre lift of the standard product of two spheres,
\begin{equation}
\label{eq:std-prod-p-q-2a}
S^q (1/\sqrt{2}) \times S^p (1/\sqrt{2}) \subset S^n \subset {\bf R}^{q+1} \times {\bf R}^{p+1} = {\bf R}^{n+1}, 
\end{equation} 
where $p+q = n-1$, via parallel transformation.  Thus, 
any compact, connected cyclide of Dupin of characteristic $(p,q)$ is Lie equivalent to the Legendre lift of a standard product of two spheres given in equation (\ref{eq:std-prod-p-q-2a}).
\end{proof}

\begin{remark}
\label{rem:10.1}
{\rm We can also see that the submanifold $\lambda$ in Theorem 
\ref{thm:4.3.1} is Lie equivalent to the Legendre lift of an isoparametric 
hypersurface in $S^n$ with two principal curvatures
by invoking Theorem \ref{thm:3.5.6}, because the two curvature sphere maps $[k_1]$ and $[k_2]$
are orthogonal to the two points $w_{n+3}$ and  $w_1$, respectively,
on the timelike line $[w_1, w_{n+3}]$ in ${\bf P}^{n+2}$.}      
\end{remark}

Theorem \ref{thm:4.3.1} is a classification of the cyclides of Dupin in the context of Lie sphere geometry.  It is also useful to have a M\"{o}bius geometric classification of the cyclides of Dupin
$M^{n-1} \subset {\bf R}^n$.
This is analogous to the classical characterizations of the cyclides of Dupin in ${\bf R}^3$ obtained in the nineteenth century (see, for example, \cite[pp. 151--166]{CR7}).  
K. Voss 
\cite{Voss} announced the classification in Theorem
\ref{thm:4.3.2} below for the higher-dimensional cyclides, but he did not publish a proof.  The theorem follows
quite directly from Theorem \ref{thm:4.3.1} and known results on surfaces of revolution.  

The theorem 
is phrased in terms embedded hypersurfaces in ${\bf R}^n$.  Thus we are excluding the standard model given in 
equation (\ref{eq:4.3.2}), where the spherical projection (and thus the Euclidean projection)
is not an immersion.  Of course, the spherical projections of all
parallel submanifolds of the standard model in the sphere are embedded isoparametric hypersurfaces
in the sphere $S^n$,
except for the Legendre lift of the other focal submanifold.  The proof of the following theorem
using techniques of Lie and 
M\"{o}bius geometry was given in \cite{Cec5},
and later versions of the proof
together with computer graphic illustrations of the cyclides are given in \cite[pp. 151--159]{Cec1}
and \cite[pp. 273--281]{CR8}. These proofs use same notation that we have been using in this section. 
We omit the proof here and refer the reader to these two references.

\begin{theorem}
\label{thm:4.3.2}
${\rm(a)}$ Every connected cyclide of Dupin $M^{n-1} \subset {\bf R}^n$ of characteristic $(p,q)$ is M\"{o}bius
equivalent to an open subset of a hypersurface of revolution obtained by revolving a $q$-sphere 
$S^q \subset {\bf R}^{q+1} \subset {\bf R}^n$ about an axis ${\bf R}^q \subset {\bf R}^{q+1}$ or a $p$-sphere
$S^p \subset {\bf R}^{p+1} \subset {\bf R}^n$ about an axis ${\bf R}^p \subset {\bf R}^{p+1}$.\\

\noindent
${\rm(b)}$ Two hypersurfaces obtained by revolving a $q$-sphere 
$S^q \subset {\bf R}^{q+1} \subset {\bf R}^n$ about an axis of revolution ${\bf R}^q \subset {\bf R}^{q+1}$ are M\"{o}bius equivalent if and only if they have the same value of
$\rho = |r|/a$, where $r$ is the signed radius
of the profile sphere $S^q$  and $a>0$ is the distance from the center of $S^q$ to the axis of revolution.
\end{theorem}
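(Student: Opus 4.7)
Plan. The strategy combines Theorem \ref{thm:4.3.1} with a M\"{o}bius-geometric analysis of the focal submanifolds of $M \subset {\bf R}^n$. Part (a) shows that (after a M\"{o}bius normalization) $M$ is a hypersurface of revolution, and part (b) identifies $\rho$ as the unique continuous M\"{o}bius invariant. The main obstacle is bridging the gap between Lie equivalence (given by Theorem \ref{thm:4.3.1}) and the sharper M\"{o}bius equivalence required here, using the extra data that $M$ is embedded in ${\bf R}^n$ rather than just living as a Legendre submanifold.

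For part (a), let $\lambda$ be the Legendre lift of $M$. By Theorem \ref{thm:4.3.1}, $\lambda$ is Lie equivalent to the standard model, and the two curvature sphere maps $[k_1],[k_2]$ factor through immersions into orthogonal quadrics $E \cap Q^{n+1}$ and $E^{\perp} \cap Q^{n+1}$, with $E, E^{\perp} \subset {\bf P}^{n+2}$ of signatures $(q+1,1)$ and $(p+1,1)$ respectively. Since $M$ is embedded in ${\bf R}^n$, the curvature spheres at each $x \in M$ are actual spheres or hyperplanes in ${\bf R}^n$, and their centers form two Euclidean focal submanifolds $V_1, V_2$ of dimensions $q$ and $p$. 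Passing to M\"{o}bius geometry, the containment of one curvature sphere family in the $(q+2)$-dimensional subspace $E$ forces every sphere in that family to be orthogonal (in the M\"{o}bius sense of Section \ref{sec:3}) to each spacelike point of the fixed subspace $E^{\perp}$; it follows that $V_1$ and $V_2$ are open subsets of a round $q$-sphere (or $q$-plane) and a round $p$-sphere (or $p$-plane) in ${\bf R}^n$ respectively. Now apply a M\"{o}bius inversion centered at a point of $V_1$ to send $V_1$ to an affine $q$-plane ${\bf R}^q \subset {\bf R}^n$. The M\"{o}bius transformations fixing ${\bf R}^q$ pointwise are precisely the Euclidean rotations $O(p+1)$ acting on the complementary ${\bf R}^{p+1}$, and comparison with the standard model (whose $O(p+1)$-symmetry about the great $q$-sphere $S^q \times \{0\}$ is manifest) forces $M$, after the normalization, to be invariant under this $O(p+1)$-action. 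Hence $M$ is an open subset of the hypersurface of revolution swept out by a profile $q$-sphere $S^q$ lying in some ${\bf R}^{q+1} \supset {\bf R}^q$. The alternative case of revolving a $p$-sphere about an axis ${\bf R}^p$ arises by instead normalizing $V_2$ to an affine $p$-plane.

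For part (b), the forward direction is immediate: two hypersurfaces of revolution with the same $\rho = |r|/a$ differ only by a Euclidean similarity (isometry composed with a dilation), and similarities are M\"{o}bius transformations. For the converse I would identify $\rho$ with an intrinsic M\"{o}bius cross-ratio computed in the line $\ell$ perpendicular to ${\bf R}^q$ through the center of the profile sphere $S^q$: the two intersections of $\ell$ with $S^q$, together with the foot of the perpendicular on ${\bf R}^q$ and the point at infinity, are four collinear points whose cross-ratio is a function of $\rho$ alone. Alternatively, the sign of $\rho - 1$ distinguishes the three M\"{o}bius-invariant topological types of the compact cyclide (ring, horn, spindle), and within each type $\rho$ is the unique continuous modulus. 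The main technical obstacle, as already noted, is the promotion from Lie to M\"{o}bius equivalence in part (a); the crucial observation is that once $V_1$ is arranged as an affine plane, the residual symmetries carrying $M$ to its standard form must preserve the set of point spheres corresponding to that plane, and therefore reduce to genuine M\"{o}bius transformations, specifically the Euclidean rotations $O(p+1)$ about the axis.
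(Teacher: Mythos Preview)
The paper does not actually prove this theorem: it explicitly states ``We omit the proof here and refer the reader to these two references'' (namely \cite{Cec5}, \cite[pp.~151--159]{Cec1}, and \cite[pp.~273--281]{CR8}). So there is no in-paper argument to compare against; I can only assess your proposal on its own terms and against the approach outlined in those references.

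Your overall architecture for part (a) is sound and is close in spirit to the published proofs: use the orthogonal splitting $E\oplus E^{\perp}$ from Theorem~\ref{thm:4.3.1}, identify the two focal loci as round spheres/planes, and then M\"{o}bius-normalize one of them to a linear subspace. However, the crucial step ``comparison with the standard model \ldots\ forces $M$ \ldots\ to be invariant under this $O(p+1)$-action'' is not justified as written. Lie equivalence to the standard model is realized by a Lie sphere transformation that need not be M\"{o}bius, so the $O(p+1)$-symmetry of the model does not transfer to $M$ by that route. What actually makes the step work is a different observation: after your normalization $V_1={\bf R}^q$, every curvature sphere in the family $[k_1]$ is centered on ${\bf R}^q$ and is therefore individually invariant under the $O(p+1)$-rotations about ${\bf R}^q$; since $M$ is (an open piece of) the envelope of this family, the envelope is $O(p+1)$-invariant, which is exactly the statement that it is a hypersurface of revolution about ${\bf R}^q$. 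You then still need to argue that the profile is a round $q$-sphere; this uses the second family $[k_2]$ (or, equivalently, the signature of $E^{\perp}$), and you have not supplied that step either. A related technical slip: the orthogonality you invoke between $[k_1]\subset E$ and points of $E^{\perp}$ is Lie orthogonality in ${\bf R}^{n+3}_2$, not the M\"{o}bius orthogonality of Section~\ref{sec:3}; passing from one to the other requires tracking the position of $e_{n+3}$ relative to $E$ and $E^{\perp}$, which is in fact how the references organize the whole proof and how the invariant $\rho$ in part (b) emerges.

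For part (b), your cross-ratio idea is correct in outline and is essentially what the references do (they compute the M\"{o}bius invariant directly from the decomposition $e_{n+3}=\alpha+\beta$ with $\alpha\in E$, $\beta\in E^{\perp}$), but as written it is only a sketch; in particular you have not shown that two configurations with the same $\rho$ are actually carried to one another by a M\"{o}bius transformation, only that $\rho$ is invariant.
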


\begin{remark}
\label{rem:10.2}
{\rm Note that in this theorem, the profile sphere $S^q \subset {\bf R}^{q+1} \subset {\bf R}^n$
is allowed to intersect the axis of revolution  ${\bf R}^q \subset {\bf R}^{q+1}$, in which case the
hypersurface of revolution has singularities in ${\bf R}^n$.  Under M\"{o}bius transformation,
this leads to cyclides which have Euclidean singularities, such as the classical horn cyclides and
spindle cyclides (see, for example,  \cite[pp. 151--159]{Cec1}
for more detail).  In such cases,
however, the corresponding Legendre map $\lambda: S^p \times S^q \rightarrow \Lambda^{2n-1}$
is still an immersion.} 
\end{remark}

\section{Local Constructions}
\label{sec:11}
Pinkall \cite{P4} introduced four constructions 
for obtaining a Dupin hypersurface $W$ in ${\bf R}^{n+m}$
from a Dupin hypersurface $M$ in ${\bf R}^n$.  We first describe these constructions in the case $m=1$ as follows.

Begin with a Dupin hypersurface
$M^{n-1}$ in ${\bf R}^n$ and then consider ${\bf R}^n$
as the linear subspace ${\bf R}^n \times \{ 0 \}$
in ${\bf R}^{n+1}$.  The following 
constructions yield a Dupin hypersurface $W^n$ in
${\bf R}^{n+1}$.
\begin{enumerate}
\item[(1)] Let $W^n$ be the cylinder $M^{n-1} \times {\bf R}$ in
${\bf R}^{n+1}$.
\item[(2)] Let $W^n$ be the hypersurface in ${\bf R}^{n+1}$
obtained by rotating
$M^{n-1}$ around an axis (a linear subspace)
${\bf R}^{n-1} \subset {\bf R}^n$.
\item[(3)] Let $W^n$ be a tube of constant radius in ${\bf R}^{n+1}$ around $M^{n-1}$.
\item[(4)] Project $M^{n-1}$ stereographically onto a hypersurface
$V^{n-1} \subset S^n \subset {\bf R}^{n+1}$.  Let
$W^n$ be the cone over $V^{n-1}$ in ${\bf R}^{n+1}$.
\end{enumerate}

In general, these constructions introduce a new principal curvature 
of multiplicity one which is constant along its lines 
of curvature.  The other principal curvatures are determined by the 
principal curvatures of $M^{n-1}$, and the Dupin property is preserved
for these principal curvatures.  These constructions can be
generalized to produce a new principal curvature of multiplicity
$m$ by considering ${\bf R}^n$ as a subset of ${\bf R}^n \times
{\bf R}^m$ rather than ${\bf R}^n \times {\bf R}$.
(See \cite[pp. 125--148]{Cec1} for a detailed description of these constructions in full generality
in the context of Lie sphere geometry.)

Although Pinkall gave these four constructions, his Theorem 4 \cite[p. 438]{P4} 
showed that the cone construction is redundant, since it
is Lie equivalent 
to a tube. (See the proof of Theorem \ref{thm:4.2.9} and
Remark \ref{cone-construction} below.)
For this reason, we will only
study three standard constructions: tubes, cylinders and surfaces of revolution.

A Dupin submanifold obtained from a lower-dimensional Dupin submanifold via one of these standard constructions is said
to be {\em reducible}.  
More generally, a Dupin submanifold which is locally Lie equivalent to such a Dupin submanifold 
is called {\em reducible}.

Using these constructions, Pinkall was able to produce a proper Dupin hypersurface 
in Euclidean space with an
arbitrary number of distinct principal curvatures, each with any given multiplicity (see Theorem \ref{thm:4.1.1}
below).  In general, these proper Dupin hypersurfaces 
cannot be extended to compact Dupin hypersurfaces without losing the
property that the number of distinct principal curvatures is constant, as we will discuss after the proof of the theorem.  

\begin{theorem}
\label{thm:4.1.1} 
Given positive integers $m_1,\ldots,m_g$ with 
\begin{displaymath}
m_1 + \cdots + m_g = n-1,
\end{displaymath}
there exists a proper Dupin hypersurface
in ${\bf R}^n$ with $g$ distinct principal curvatures having respective multiplicities $m_1,\ldots,m_g$.
\end{theorem}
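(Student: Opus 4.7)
The plan is to argue by induction on the number $g$ of distinct principal curvatures, using one of Pinkall's standard constructions to add a new principal curvature at each step.

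For the base case $g=1$, take $M$ to be the unit sphere $S^{n-1} \subset \mathbf{R}^n$, which has a single principal curvature of multiplicity $n-1 = m_1$ and is trivially proper Dupin. For the inductive step, assume the theorem is known for $g-1$ distinct principal curvatures in any ambient Euclidean space. Given positive integers $m_1, \dots, m_g$ with $m_1 + \cdots + m_g = n-1$, set $n' = n - m_g$, so that $m_1 + \cdots + m_{g-1} = n' - 1$. By the inductive hypothesis, there is a proper Dupin hypersurface $M^{n'-1} \subset \mathbf{R}^{n'}$ with $g-1$ distinct principal curvatures of multiplicities $m_1, \dots, m_{g-1}$.

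Embed $\mathbf{R}^{n'}$ as $\mathbf{R}^{n'} \times \{0\} \subset \mathbf{R}^{n'} \times \mathbf{R}^{m_g} = \mathbf{R}^n$, and let $W^{n-1}$ be the tube of constant radius $r > 0$ around $M^{n'-1}$ in $\mathbf{R}^n$. This is Pinkall's generalized tube construction with an added normal factor of dimension $m_g$. The general properties of this construction yield the following splitting of principal curvatures of $W$ at a point $(x, v)$, where $x \in M$ and $v$ is a unit normal in the $\mathbf{R}^{m_g}$-fiber: the principal curvatures tangent to $M$ are those of $M$ at $x$, each with its original multiplicity, while the directions along the tubular fiber $S^{m_g-1}$ contribute a single new principal curvature of constant value $\pm 1/r$ of multiplicity $m_g$. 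Moreover each curvature surface of $M$ lifts to a curvature surface of $W$ on which the corresponding principal curvature is constant, and the $S^{m_g-1}$-fibers themselves are curvature surfaces for the new principal curvature $\pm 1/r$. Hence $W$ is Dupin.

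To upgrade this to proper Dupin, I must ensure that the new principal curvature $\pm 1/r$ remains distinct from the $g-1$ principal curvatures of $M$. Since only a local construction is required, I would restrict $M$ to a small open set $U$ on which all $g-1$ principal curvatures are bounded, and then take $r$ so small that $1/r$ exceeds the supremum of the absolute values of those curvatures on $U$. The tube over $U$ is then a proper Dupin hypersurface in $\mathbf{R}^n$ with exactly $g$ distinct principal curvatures of the prescribed multiplicities, closing the induction.

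The main obstacle is the assertion that Pinkall's tube construction genuinely produces the claimed principal-curvature structure, i.e. that the original principal foliations on $M$ lift to principal foliations on $W$ of the same dimension with the same (now constant along the leaves) principal curvatures, and that the added tubular direction contributes precisely a single principal curvature of multiplicity $m_g$. This is a detailed computation of the shape operator of the tube (carried out in Section \ref{sec:11} and the references cited there); once it is granted, the induction is purely combinatorial. The secondary point of choosing $r$ locally to avoid curvature collisions is then routine.
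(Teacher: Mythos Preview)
Your inductive scheme is the right idea, but the tube construction does not behave as you describe, and the gap is exactly where the paper warns in the paragraphs following this theorem. When $M^{n'-1}$ is viewed in $\mathbf{R}^n=\mathbf{R}^{n'}\times\mathbf{R}^{m_g}$, its codimension is $m_g+1$, so the tubular fiber is $S^{m_g}$, not $S^{m_g-1}$. More seriously, at a point $(x,v)$ of the tube the shape operator in the $M$-directions is governed by $A_v$, not by $A_\xi$; writing $v=\cos\theta\,\xi+\sin\theta\,w$ with $w\in\mathbf{R}^{m_g}$, one has $A_v=\cos\theta\,A_\xi$, so the ``old'' principal curvatures of the tube are $\cos\theta\,\kappa_i/(1-r\cos\theta\,\kappa_i)$ rather than the $\kappa_i$ themselves. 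In particular, at the points where $v$ lies in the $\mathbf{R}^{m_g}$-fiber (precisely the points you single out), $\cos\theta=0$ and \emph{all} of the old curvatures collapse to $0$: the tube has only two distinct principal curvatures there regardless of $g-1$. Your remedy of shrinking $r$ does not touch this obstruction at all.

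The paper's argument sidesteps the issue by using the cylinder construction instead: $U\times\mathbf{R}^{m_g}$ acquires a new principal curvature identically $0$, while the old ones really are the $\kappa_i$ of $M$, unchanged and with their original multiplicities. The one thing to arrange is that none of the $\kappa_i$ is already $0$; this is handled by passing to an open subset and, at each subsequent step, first applying a M\"{o}bius inversion in a sphere to push all curvatures away from $0$ before forming the next cylinder. You could salvage your tube approach by restricting not $M$ but the tube itself to a neighborhood of $v=\xi$ on the fiber $S^{m_g}$ (where $\cos\theta$ is bounded away from $0$), after which the $g$ values are distinct on that open piece and the induction closes. But as written, the key step fails.
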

\begin{proof}
The proof is by an inductive construction, which will be clear once the first few examples are done.  To begin, note
that a usual torus of revolution 
in ${\bf R}^3$ is a proper Dupin hypersurface with two principal curvatures.  To
construct a proper Dupin hypersurface $M^3$ in ${\bf R}^4$ with three principal curvatures, each of multiplicity one,
begin with an open subset $U$ of a torus of revolution in ${\bf R}^3$ on which neither principal curvature vanishes.
Take $M^3$ to be the cylinder
$U \times {\bf R}$ in ${\bf R}^3 \times {\bf R} = {\bf R}^4$.  Then $M^3$ has three
distinct principal curvatures at each point, one of which is zero.  These are clearly constant along their
corresponding 1-dimensional curvature surfaces
(lines of curvature).

To get a proper Dupin hypersurface in ${\bf R}^5$ with three principal curvatures having respective multiplicities
$m_1 = m_2 = 1$, $m_3 =2$, one simply takes 
\begin{displaymath}
U \times {\bf R}^2 \subset {\bf R}^3 \times {\bf R}^2 = {\bf R}^5.
\end{displaymath}
for the set $U$ above.
To obtain a proper Dupin hypersurface $M^4$ in ${\bf R}^5$ with four principal curvatures, first invert the hypersurface
$M^3$ above in a 3-sphere in ${\bf R}^4$, chosen so that the image of $M^3$ contains an open subset $W^3$ on which no
principal curvature vanishes.  The hypersurface $W^3$ is proper Dupin, since the proper Dupin property is preserved by
M\"{o}bius transformations.  Now take $M^4$ to be the cylinder
$W \times {\bf R}$ in ${\bf R}^4 \times {\bf R} = {\bf R}^5$.
\end{proof}

In general, there are problems in trying to produce compact proper Dupin hypersurfaces by using these
constructions.  We now examine some of the problems involved with the the cylinder, 
surface of revolution, and tube constructions individually (see \cite[pp. 127--141]{Cec1} for more details).

For the cylinder construction,
the new principal curvature of $W^n$ is identically zero, while 
the other principal curvatures of $W^n$ are equal to those
of $M^{n-1}$.  Thus, if one of the principal curvatures $\mu$
of $M^{n-1}$ is zero at some points but not identically
zero, then the number of distinct principal curvatures 
is not constant on $W^n$, and so $W^n$ is Dupin but not proper Dupin.  

For the surface of revolution construction, if the focal point corresponding to a principal curvature
$\mu$ at a point $x$ of the profile submanifold $M^{n-1}$ lies on the axis of revolution ${\bf R}^{n-1}$, then
the principal curvature of $W^n$ at $x$ 
determined by $\mu$ is equal to the new principal curvature of $W^n$ resulting from
the surface of revolution construction.
Thus, if the focal point of $M^{n-1}$ corresponding to $\mu$ lies
on the axis of revolution for some but not all points of  $M^{n-1}$, then $W^n$ is not proper Dupin.

If $W^n$ is a tube in ${\bf R}^{n+1}$ of radius
$\varepsilon$ over $M^{n-1}$, then there are exactly two
distinct principal curvatures at the points in the set
$M^{n-1} \times \{ \pm \varepsilon \}$ in $W^n$, regardless
of the number of distinct principal curvatures on $M^{n-1}$.
Thus, $W^n$ is not a proper Dupin hypersurface unless the original hypersurface $M^{n-1}$ is totally umbilic, i.e., it
has only one distinct principal curvature at each point.

Another problem with these constructions is that they may not yield
an immersed hypersurface in ${\bf R}^{n+1}$. In the tube construction, if the
radius of the tube is the reciprocal of one of the 
principal curvatures of $M^{n-1}$ at some point, then the constructed object has a singularity.  For the
surface of revolution construction, a singularity occurs
if the profile submanifold $M^{n-1}$ intersects the axis of revolution. 

Many of the issues mentioned in the preceding paragraphs can be resolved by working
in the context of Lie sphere geometry and considering
Legendre lifts of hypersurfaces in Euclidean space (see \cite[pp. 127--148]{Cec1}).
In that context, a proper Dupin submanifold 
$\lambda : M^{n-1} \rightarrow \Lambda ^{2n-1}$ is said to be {\em reducible} if it is
is locally Lie equivalent to the Legendre lift of a hypersurface in ${\bf R}^n$ obtained by one of Pinkall's constructions.

Pinkall \cite{P4} found the following useful characterization of reducibility in the context of Lie sphere 
geometry.  For simplicity, we deal with the constructions as they are written at the beginning of this section, i.e., we take the case where the multiplicity of the new principal curvature is $m=1$.  Here we give Pinkall's proof
\cite[p. 438]{P4} (see also \cite[pp. 143--144]{Cec1}).

\begin{theorem}
\label{thm:4.2.9} 
A connected proper Dupin submanifold $\lambda: W^{n-1} \rightarrow \Lambda^{2n-1}$ is reducible if and only if there exists a curvature sphere $[K]$ of $\lambda$ that lies in a linear subspace of ${\bf P}^{n+2}$ of codimension two.
\end{theorem}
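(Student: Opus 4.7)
The plan is to prove the two implications separately, with the harder work going into the reverse direction, which is essentially a case analysis on the signature of a certain $2$-plane.

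For the forward implication, I would verify directly that each of the three standard constructions (tube, cylinder, surface of revolution) introduces a new curvature sphere whose Lie coordinates satisfy two independent linear relations. Specifically: for a tube of constant radius $\varepsilon$ around $M^{n-2} \subset {\bf R}^{n-1} \subset {\bf R}^n$, the new curvature sphere has signed radius $\pm\varepsilon$ (one linear condition via equation (\ref{eq:1.3.6})) and center in ${\bf R}^{n-1}$ (giving $K_{n+2}=0$); for the cylinder $M^{n-2} \times {\bf R}$, the new curvature sphere is a hyperplane (so $K_1+K_2=0$) whose unit normal is perpendicular to the cylinder axis (an extra linear condition on one Euclidean coordinate of $N$); for a surface of revolution about an axis ${\bf R}^{n-2}$, the new curvature sphere has center on the axis, forcing the two "off-axis" coordinates of the center to vanish. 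By Theorem \ref{thm:3.4.3} and the fact that projective transformations preserve codimension-two linear subspaces, the condition is invariant under Lie equivalence, which handles the forward direction for any submanifold locally Lie equivalent to a Pinkall construction.

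For the reverse implication, suppose $[K]$ is a curvature sphere of $\lambda$ with image contained in a codimension-two subspace $P \subset {\bf P}^{n+2}$, and let $V \subset {\bf R}^{n+3}_2$ be the $2$-plane polar to $P$. I would analyze $V$ by signature. The signature $(0,2)$ case is impossible: then $V^{\perp}$ would be definite of signature $(n+1,0)$ and contain no nonzero lightlike vectors, contradicting $[K] \in P \cap Q^{n+1}$. The remaining non-degenerate signatures are $(2,0)$ and $(1,1)$; the degenerate rank-one case yields a $V$ of signature $(1,0)$ together with a one-dimensional radical of null vectors (rank-zero and one-spacelike-only cases can be excluded by genericity of $[K]$). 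By Witt's theorem, the group $O(n+1,2)$ acts transitively on $2$-planes of each of these three types, so after applying a Lie sphere transformation I can place $V$ in a standard form: $\operatorname{Span}\{e_{n+1},e_{n+2}\}$ in the $(2,0)$ case, $\operatorname{Span}\{e_{n+2},\sin\rho\,e_1-\cos\rho\,e_{n+3}\}$ (or $\operatorname{Span}\{e_1,e_2\}$) in the $(1,1)$ case, and $\operatorname{Span}\{-e_1+e_2,e_{n+2}\}$ in the degenerate case. In each standard form I would read off, via the correspondence in equation (\ref{eq:1.3.4}), what geometric restriction the conditions $\langle K,v\rangle = 0$ for $v \in V$ impose on the Lie sphere $[K]$: respectively, that its center lies on a fixed $(n-2)$-plane (hence $\lambda$ is the Legendre lift of a hypersurface of revolution about this axis), that it has constant signed radius with center on a fixed hyperplane (hence $\lambda$ is a tube around a hypersurface in that hyperplane), or that it is a hyperplane whose unit normal lies in a fixed hyperplane (hence $\lambda$ is a cylinder over such a hypersurface).

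The main obstacle is the last step of each case in the reverse direction: showing that the standard form of $V$ actually forces $\lambda$ to be the Legendre lift of the corresponding construction, and not merely that the single curvature sphere map $[K]$ has the right shape. To close this gap I would use Corollary~\ref{cor:3.4.5}(e): since the curvature sphere $[K]$ is constant along each leaf of its principal foliation $T_K$, the map $[K]$ factors through the leaf space $W^{n-1}/T_K$ into $P \cap Q^{n+1}$, and that leaf space (together with the complementary data coming from the other curvature sphere $[K']$, which by the Legendre conditions must vary in a compatible way) should be identified with the profile submanifold $M^{n-2}$. The Legendre lift of the Pinkall construction applied to this $M^{n-2}$ then reproduces $\lambda$ locally. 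Passing to a parallel submanifold via $P_t$ if necessary (Theorem~\ref{thm:3.4.4}) ensures the spherical or Euclidean projection is an embedding, so the Euclidean picture of Section~\ref{sec:11} applies; this last reconciliation between the Lie-geometric normalization and the Euclidean construction is where the bookkeeping is delicate but routine.
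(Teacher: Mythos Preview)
Your plan follows essentially the same route as the paper's proof: verify the forward direction construction by construction, then for the converse classify the polar $2$-plane $V=E^{\perp}$ by signature---the paper's cases $(+,+)$, $(0,+)$, $(-,+)$ are precisely your $(2,0)$, degenerate-with-null-radical, and $(1,1)$---normalize by a Lie sphere transformation, and read off the Euclidean construction. Two small points: the exclusion of the remaining signatures is not by ``genericity of $[K]$'' but because in those cases $E\cap Q^{n+1}$ is empty or a single point and so cannot contain the image of a curvature sphere map; and for the step you flag as the main obstacle, the paper argues more directly that $A\lambda$, being the \emph{envelope} of the curvature sphere family lying in $F\cap Q^{n+1}$, must itself be the surface of revolution, cylinder, or tube (with a reference to \cite[pp.~142--143]{Cec1} for the envelope details), rather than going through leaf spaces and Corollary~\ref{cor:3.4.5}(e).
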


\begin{proof}
We first note that the following manifolds of spheres are hyperplane sections of the Lie quadric $Q^{n+1}$:\\

a) the hyperplanes in ${\bf R}^n$,

b) the spheres with a fixed signed radius $r$,

c) the spheres that are orthogonal to a fixed sphere.\\

To see this, we use the Lie coordinates given in equation (\ref{eq:1.3.4}).
In Case a), the hyperplanes are characterized by the equation $x_1 + x_2 = 0$, which clearly determines a hyperplane section of $Q^{n+1}$.  In Case b), the spheres with signed radius $r$ are determined by the linear equation
\begin{displaymath}
r(x_1 + x_2) = x_{n+3}.
\end{displaymath}
In Case c), it can be assumed that the fixed sphere is a hyperplane $H$ through the origin in ${\bf R}^n$.  A sphere is orthogonal to $H$ if and only if its center lies in $H$.  This clearly imposes a linear condition on the vector in 
equation (\ref{eq:1.3.4}) representing the sphere.

The sets a), b), c) are each of the form
\begin{displaymath}
\{x \in Q^{n+1} \mid \langle x, w \rangle = 0\},
\end{displaymath}
with $\langle w,w \rangle = 0, -1, 1$ in Cases a), b), c), respectively.

We can now see that every reducible Dupin hypersurface has a family of curvature spheres that is contained in two hyperplane sections of the Lie quadric as follows.

For the cylinder construction, the tangent hyperplanes of the cylinder are curvature spheres that are orthogonal to a fixed hyperplane in ${\bf R}^n$.  Thus, that family of curvature spheres is contained in a $n$-dimensional
linear subspace $E$ of  ${\bf P}^{n+2}$ such that the signature of $\langle \ ,\ \rangle$ on the polar subspace $E^\perp$ of $E$ is $(0,+)$.

For the surface of revolution construction, the new family of curvature spheres all have their centers in the axis of revolution, which is a linear subspace of codimension 2 in ${\bf R}^n$.  Thus, that family of curvature spheres is contained in a $n$-dimensional linear subspace $E$ of  ${\bf P}^{n+2}$ such that the signature of $\langle \ ,\ \rangle$ on the polar subspace $E^\perp$ of $E$ is $(+,+)$.

For the tube construction, the new family of curvature spheres all have the same radius, and their centers all lie in the hyperplane of ${\bf R}^n$ containing the manifold over which the tube is constructed.
Thus, that family of curvature spheres is contained in a $n$-dimensional linear subspace $E$ of  ${\bf P}^{n+2}$ such that the signature of $\langle \ ,\ \rangle$ on the polar subspace $E^\perp$ of $E$ is $(-,+)$.

Conversely, suppose that $K: W^{n-1} \rightarrow {\bf P}^{n+2}$ is a family of curvature spheres that is contained
in an $n$-dimensional linear subspace $E$ of  ${\bf P}^{n+2}$.  Then  $\langle \ ,\ \rangle$  must have signature
$(+,+)$, $(0,+)$ or $(-,+)$ on the polar subspace $E^\perp$, because otherwise $E \cap Q^{n+1}$ would be empty or would consist of a single point.

If the signature of $E^\perp$ is $(+,+)$, then there exists a Lie sphere transformation $A$ which takes  $E$ 
to a space  $F = A(E)$ such that $F \cap Q^{n+1}$ consists of all spheres that have their centers in a fixed $(n-2)$-dimensional linear subspace ${\bf R}^{n-2}$ of 
${\bf R}^n$.  Since one family of curvature spheres of this Dupin submanifold $A\lambda$ lies in $F \cap Q^{n+1}$, and the Dupin submanifold $A\lambda$ is the envelope of these spheres,  $A\lambda$ must be a surface of revolution with the axis ${\bf R}^{n-2}$ (see \cite[pp. 142--143]{Cec1} for more detail on envelopes of
families of spheres in this situation), and so $\lambda$ is reducible.

If the signature of $E^\perp$ is $(0,+)$, then there exists a Lie sphere transformation $A$ which takes  $E$ 
to a space  $F = A(E)$ such that $F \cap Q^{n+1}$ consists of hyperplanes orthogonal to a fixed hyperplane in 
${\bf R}^n$.  Since one family of curvature spheres of this Dupin submanifold $A\lambda$ lies in $F \cap Q^{n+1}$, and the Dupin submanifold $A\lambda$ is the envelope of these spheres,  $A\lambda$ is obtained as a result of the cylinder construction, and so $\lambda$ is reducible.

If the signature of $E^\perp$ is $(-,+)$, then there exists a Lie sphere transformation $A$ which takes  $E$ 
to a space  $F = A(E)$ such that $F \cap Q^{n+1}$ 
consists of spheres that all have the same radius and whose centers lie in a hyperplane
${\bf R}^{n-1}$ of ${\bf R}^n$.  Since one family of curvature spheres of this Dupin submanifold $A\lambda$ lies in $F \cap Q^{n+1}$, and the Dupin submanifold $A\lambda$ is the envelope of these spheres,  $A\lambda$ is obtained as a result of the tube construction, and so $\lambda$ is reducible.
\end{proof}

\begin{remark}
\label{cone-construction}
{\rm Note that for the cone construction $(4)$ at the beginning of this section, the new family $[K]$ of curvature spheres
consists of hyperplanes through the origin that are tangent to the cone along the rulings.  
In the Lie coordinates (\ref{eq:1.3.4}),
the origin corresponds to the point $[e_1 + e_2]$,  while the hyperplanes are orthogonal to the improper point
$[e_1 - e_2]$.  Thus, the hyperplanes through the origin correspond by equation (\ref{eq:1.3.4})
to points in the linear subspace $E$, whose orthogonal complement $E^\perp$ is spanned $\{e_1 + e_2, e_1 - e_2\}$. This space $E^\perp$ is also spanned by $\{e_1, e_2\}$, and so the signature of $E^\perp$ is $(-,+)$, the same as for the tube construction. Therefore, the cone construction and the tube construction are Lie equivalent.
(See Remark 5.13 of \cite[p. 144]{Cec1} for more detail.)
Finally, there is one more geometric interpretation of the tube
construction.  Note that a family $[K]$ of curvature spheres that lies in a linear subspace whose orthogonal
complement has signature $(-,+)$ can also be considered to consist of spheres in $S^n$ of constant radius
in the spherical metric
whose centers lie in a hyperplane.  The corresponding proper Dupin submanifold can thus be
considered to be a tube in the spherical metric over a lower-dimensional submanifold 
that lies in a hyperplane section of $S^n$.}
\end{remark}

As we noted after the proof of Theorem \ref{thm:4.1.1}, there are difficulties in constructing compact proper
Dupin hypersurfaces by using Pinkall's constructions.
We can construct a reducible compact proper Dupin hypersurface with two principal curvatures 
by revolving a circle $C$ in ${\bf R}^3$ about an axis ${\bf R}^1 \subset {\bf R}^3$ that is disjoint from $C$ 
to obtain a torus of revolution.  Of course, this can be generalized to higher dimensions, as in
Theorem \ref{thm:4.3.2},  by revolving a $q$-sphere 
$S^q \subset {\bf R}^{q+1} \subset {\bf R}^n$ about an axis ${\bf R}^q \subset {\bf R}^{q+1}$ to
obtain a compact cyclide of Dupin of characteristic $(p,q)$, where $p + q = n-1$.  Such a cyclide has two
principal curvatures at each point having respective multiplicities $p$ and $q$.

However, Cecil, Chi and Jensen \cite{CCJ2} (see also \cite[pp. 146--147]{Cec1}) showed that 
every compact proper Dupin hypersurface with more than two principal curvatures is irreducible, as stated in the following theorem.
\noindent
\begin{theorem}
\label{thm:CCJ-2007}
(Cecil-Chi-Jensen, 2007)
If $M^{n-1} \subset {\bf R}^n$ is a compact, connected proper Dupin hypersurface with $g \geq 3$
principal curvatures, then $M^{n-1}$ is irreducible.
\end{theorem}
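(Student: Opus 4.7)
My plan is to argue by contradiction using Pinkall's reducibility criterion (Theorem \ref{thm:4.2.9}). Suppose the Legendre lift $\lambda: M^{n-1} \to \Lambda^{2n-1}$ of $M^{n-1}$ is reducible. Then there exist an index $i$ and a codimension-two linear subspace $E \subset {\bf P}^{n+2}$ such that the curvature sphere map $[K_i]$ has its image contained in $E \cap Q^{n+1}$. By the signature analysis at the end of the proof of Theorem \ref{thm:4.2.9}, the restriction of $\langle\,,\,\rangle$ to $E^{\perp}$ must have signature $(+,+)$, $(0,+)$ or $(-,+)$, since otherwise $E \cap Q^{n+1}$ would be empty or a single point, contradicting the fact that $[K_i]$ factors through an immersion of the nontrivial leaf space $M^{n-1}/T_i$.

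Next I would apply a Lie sphere transformation $\beta$, which by Theorem \ref{thm:dupin-lie-invariant} preserves the proper Dupin property and the value of $g$, to bring $E$ into one of the three canonical forms identified in the proof of Theorem \ref{thm:4.2.9}. The transformed submanifold $\beta\lambda$ is then locally Lie equivalent to the Legendre lift of a hypersurface in ${\bf R}^n$ obtained from a lower-dimensional Dupin submanifold $\tilde M$ via the tube, cylinder, or surface-of-revolution construction. In each case, the family $[K_i]$ is the ``new'' curvature sphere map created by the construction, while the remaining $g - 1 \geq 2$ curvature sphere maps $[K_j]$, $j \neq i$, come from curvature spheres of $\tilde M$. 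Compactness of $M^{n-1}$ as a smooth manifold is preserved under $\beta$.

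The decisive step, and the one I expect to be the main obstacle, is to derive a contradiction from the global compactness of $M^{n-1}$ together with $g \geq 3$. The intuition is that each of the three constructions carries a distinguished geometric locus at which the number of distinct curvature spheres must strictly drop: the focal set of the tube, the ``generator at infinity'' for the cylinder, and the axis for the surface of revolution. Compactness of $M^{n-1}$ should force its global Lie-geometric realization to meet such a locus, yielding a point where the number of distinct curvature spheres is strictly less than $g$, contradicting the proper Dupin hypothesis. Making this global obstruction rigorous requires the algebraicity of proper Dupin hypersurfaces (Cecil-Chi-Jensen \cite{CCJ3}), which tightly links the local structure to the global behavior. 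The restriction $g \geq 3$ must be used crucially at this stage: for $g = 2$ the obstruction disappears, since the compact cyclides of Theorem \ref{thm:4.3.1} are themselves reducible surfaces of revolution, so the argument must exploit the presence of at least three distinct curvature sphere families — together with the rigidity imposed by the constant-multiplicity assumption on all $g$ of them — to force the jump in multiplicity at the degenerate locus.
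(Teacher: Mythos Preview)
Your setup via Theorem~\ref{thm:4.2.9} and the signature trichotomy for $E^\perp$ is exactly how the argument in \cite{CCJ2} and \cite[pp.~146--148]{Cec1} begins. The paper itself does not give a proof of Theorem~\ref{thm:CCJ-2007}; it only records that ``the proof uses known facts about the topology of a compact proper Dupin hypersurface and the topology of a compact hypersurface obtained by one of Pinkall's constructions'' and refers the reader to those sources. So the relevant comparison is between your proposed mechanism and the published one.

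There is a genuine gap at your decisive step, and the mechanism you sketch is not the one that works. Your idea is that compactness should force the Euclidean or spherical realization of $\beta\lambda$ to hit a geometric locus (axis, focal set, generator at infinity) where the number of curvature spheres drops. The difficulty is that these loci live in the \emph{target} $S^n$ or ${\bf R}^n$, whereas what is preserved under $\beta$ is only the compactness of the \emph{domain} $M^{n-1}$; the spherical projection of $\beta\lambda$ need not even be an immersion, and nothing prevents the image from avoiding any prescribed locus in the target. More seriously, reducibility is a local notion, so you do not know that $\beta\lambda$ is \emph{globally} a tube, cylinder, or surface of revolution in any naive sense; what you do know globally is only that the image of the curvature sphere map $[K_i]$ is contained in the quadric $E\cap Q^{n+1}$. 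Algebraicity is not the missing ingredient here.

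The published argument is topological. A compact proper Dupin hypersurface in $S^n$ is taut (Thorbergsson), so its ${\bf Z}_2$-homology, and in particular the sum of its ${\bf Z}_2$-Betti numbers, is completely determined by the multiplicities $m_1,\ldots,m_g$. On the other hand, once one knows that the image of $[K_i]$ sits in $E\cap Q^{n+1}$ with $E^\perp$ of signature $(+,+)$, $(0,+)$, or $(-,+)$, one can read off the homotopy type that the compact $M^{n-1}$ must have from the bundle structure forced by the corresponding construction. For $g\geq 3$ these two computations of the ${\bf Z}_2$-Betti numbers are incompatible, while for $g=2$ they are consistent (witness the compact cyclides). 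This is where the hypothesis $g\geq 3$ actually enters, and it is a homological obstruction rather than the pointwise multiplicity jump you envision.
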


The proof uses known facts about the topology of a compact proper Dupin hypersurface
and the topology of a compact hypersurface obtained by one of Pinkall's constructions
(see \cite{CCJ2} or \cite[pp. 146--148]{Cec1} for a complete proof).\\

\section{Classifications of Dupin Hypersurfaces}
\label{sec:12}
In this section, we discuss classification results concerning proper Dupin hypersurfaces in ${\bf R}^n$ or $S^n$ that have been obtained using the techniques of Lie sphere geometry.  These primarily concern two important classes: compact proper Dupin hypersurfaces, and irreducible proper Dupin hypersurfaces.  Of course,
Theorem \ref{thm:CCJ-2007} shows there is a strong connection between these two classes of hypersurfaces, and many classifications of compact proper Dupin hypersurfaces with $g \geq 3$
principal curvatures have been obtained by assuming that the hypersurface is irreducible and 
working locally in the context of Lie sphere geometry using the method of moving frames.  
(See, for example, the papers of Pinkall \cite{P1}, \cite{P3}--\cite{P4}, Cecil and Chern \cite{CC2}, Cecil and Jensen \cite{CJ2}--\cite{CJ3}, and Cecil, Chi and Jensen \cite{CCJ2}.)  See also \cite{Cec10} for a survey of
classifications of compact proper Dupin hypersurfaces.

Two key tools in many of these classifications are:\\

\noindent
1) the Lie sphere geometric characterization of Legendre lifts of isoparametric hypersurfaces given in 
Theorem \ref{thm:3.5.6},\\

\noindent
2) Pinkall's characterization of reducible proper Dupin hypersurfaces given in Theorem \ref{thm:4.2.9}.\\

\noindent
We now summarize these classifications and give references to their proofs.\\

We begin by recalling some important facts about compact proper Dupin hypersurfaces embedded in $S^n$.  
Following M\"{u}nzner's work \cite{Mu}--\cite{Mu2} on isoparametric hypersurfaces, Thorbergsson \cite{Th1} proved the following theorem which shows that M\"{u}nzner's restriction on the number $g$ of 
distinct principal curvatures of an isoparametric hypersurface also holds for compact proper Dupin hypersurfaces embedded in $S^n$.  This is in stark contrast to Pinkall's Theorem \ref{thm:4.1.1} which states that
there are no restrictions on the number of distinct principal curvatures or their multiplicities for
non-compact proper Dupin hypersurfaces.

\begin{theorem}
\label{thm:thorbergsson}
(Thorbergsson, 1983) The number $g$ of distinct principal curvatures of a compact, connected
proper Dupin hypersurface
$M \subset S^n$ must be $1,2,3,4$ or $6$.
\end{theorem}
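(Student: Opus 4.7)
The plan is to follow Thorbergsson's original approach, which reduces the bound on $g$ to M\"unzner's classification of the possible numbers of principal curvatures of an isoparametric hypersurface in $S^n$. The central new ingredient, which must be proved first, is that a compact, connected proper Dupin hypersurface $M \subset S^n$ is \emph{tautly embedded}: for every point $p \in S^n$ which is not a focal point of $M$, the spherical distance function $L_p:M \to {\bf R}$ defined by $L_p(x) = -\cos d(p,x)$ is a Morse function whose number of critical points on $M$ equals the sum of the ${\bf Z}_2$-Betti numbers of $M$. Equivalently, every sublevel set of $L_p$ injects on ${\bf Z}_2$-homology into $M$.

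First I would establish this tautness property. The proper Dupin hypothesis supplies exactly what is needed: by Corollary \ref{cor:3.4.5} and the subsequent discussion, each principal curvature $\kappa_i$ has constant multiplicity $m_i$, the principal distribution $T_i$ is a smooth foliation with leaves that are open subsets of $m_i$-spheres in $S^n$, and (for $m_i \geq 1$) the corresponding curvature sphere map is constant along the leaves. A focal point $p$ of $M$ with respect to a leaf $S$ of $T_i$ is then a center of the (constant) curvature sphere along $S$, so the entire leaf $S$ collapses to $p$ under the focal map, and the degenerate critical set of $L_p$ is a disjoint union of such leaves. Exploiting this rigidity, a Morse-Bott style analysis, combined with induction on parallel hypersurfaces via the spherical parallel transformation $P_t$ of Section \ref{sec:7}, shows that $L_p$ realizes exactly the ${\bf Z}_2$-Betti number count. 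This is the hard step, and it is exactly Thorbergsson's contribution; all the later algebraic topology is then classical.

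Next, with tautness in hand, the topological structure of $M$ in $S^n$ can be made explicit. The two ``extreme'' curvature spheres $[K_1]$ and $[K_g]$ yield focal submanifolds $M_+,M_- \subset S^n$ of codimensions $m_1+1$ and $m_g+1$, and tautness implies that $S^n$ decomposes as the union of two disk bundles $B_\pm$ over $M_\pm$, glued along their common boundary $M$. Each intermediate parallel hypersurface $M_t = P_t(M)$ is again proper Dupin with the same principal curvature structure, foliating $S^n \setminus (M_+ \cup M_-)$. This is precisely the global geometric setup that M\"unzner extracts from an isoparametric hypersurface in his papers, so from this point on his cohomological machinery applies verbatim: a Mayer--Vietoris/Serre spectral sequence computation forces the ${\bf Z}_2$-cohomology ring of $M$ into a very rigid form depending on $g$ and the multiplicities $m_i$, and a delicate arithmetic argument on the Poincar\'e polynomials then eliminates every value of $g$ except $1,2,3,4,6$.

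The main obstacle is the tautness proof. Everything downstream is a direct translation of M\"unzner's arguments under the topological hypotheses that tautness supplies, but tautness itself is a strong global conclusion from what is a priori only an infinitesimal/local hypothesis (the Dupin condition along each leaf). Controlling the degenerate critical sets of the distance functions $L_p$ --- which for a generic hypersurface could destroy the Morse inequalities --- requires crucially that the curvature sphere is \emph{constant} along each multi-dimensional leaf, so that the ``bad'' critical manifolds are spheres contributing exactly their own ${\bf Z}_2$-Betti numbers to the count. Once this is established, the restriction $g \in \{1,2,3,4,6\}$ follows from M\"unzner's theorem.
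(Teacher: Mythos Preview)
Your proposal follows essentially the same approach as the paper's sketch: first establish that $M$ is tautly embedded in $S^n$, then use tautness to show that $M$ divides $S^n$ into two ball bundles over the first focal submanifolds $M_+$ and $M_-$, and finally invoke M\"unzner's cohomological argument to conclude $g \in \{1,2,3,4,6\}$. The paper itself does not give a detailed proof but only summarizes Thorbergsson's strategy in a single paragraph, and your outline matches that summary at the level of detail the paper provides.
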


In proving this theorem,
Thorbergsson first shows that a compact, connected proper Dupin hypersurface $M \subset S^n$
must be tautly embedded,
that is, every nondegenerate spherical distance function $L_p (x) = d(p,x)^2$,  for  $p \in S^n$,
has the minimum number of critical points required by the Morse inequalities on $M$.
Thorbergsson then uses the fact that $M$ is tautly embedded in $S^n$ to show that $M$ divides $S^n$ into
two ball bundles over the first focal submanifolds, $M_+$ and $M_-$, on either side of $M$ in $S^n$.
This gives the same topological situation as in the isoparametric case,
and the theorem then follows from M\"{u}nzner's  \cite{Mu2} proof of the restriction on $g$ for isoparametric
hypersurfaces.

The topological situation that $M$ divides $S^n$ into
two ball bundles over the first focal submanifolds, $M_+$ and $M_-$, on either side of $M$ in $S^n$ also
leads to important restrictions on the multiplicities of the principal 
curvatures of compact proper Dupin hypersurfaces, due to
Stolz \cite{Stolz} for $g=4$, and to Grove and Halperin \cite{GH} for $g=6$.
These restrictions were obtained by using advanced topological considerations in each case, and they show that 
the multiplicities of the principal curvatures of
a compact proper Dupin hypersurface embedded in $S^n$ must be the same as the multiplicities 
of the principal curvatures of some isoparametric hypersurface in $S^n$.

Grove and Halperin \cite{GH} also gave a list of the integral homology of all compact
proper Dupin hypersurfaces, and Fang \cite{Fang2} found results on the topology of compact proper Dupin hypersurfaces with $g=6$ principal curvatures.

In 1985, it was known that every compact, connected proper Dupin hypersurface 
$M \subset S^n$ (or ${\bf R}^n$) with
$g= 1,2$ or 3 principal curvatures is Lie equivalent to an isoparametric hypersurface in $S^n$.
At that time, every other known example
of a compact, connected proper Dupin hypersurface in $S^n$ 
was also Lie equivalent to an isoparametric hypersurface in $S^n$. 
This together with Thorbergsson's Theorem \ref{thm:thorbergsson} above led to the following
conjecture by Cecil and Ryan \cite[p. 184]{CR7} (which we have rephrased slightly).

\begin{conjecture} 
\label{cecil-ryan} 
(Cecil-Ryan, 1985) Every compact, connected proper 
Dupin hypersurface $M \subset S^n$ $($or ${\bf R}^n)$
is Lie equivalent to an isoparametric hypersurface in $S^n$.
\end{conjecture}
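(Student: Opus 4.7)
The plan is to reduce the problem according to the number $g$ of distinct principal curvatures, which by Thorbergsson's Theorem \ref{thm:thorbergsson} must lie in $\{1,2,3,4,6\}$, and then in each case attempt to exhibit $g$ points $[P_1],\ldots,[P_g]$ on a timelike line in ${\bf P}^{n+2}$ satisfying $\langle K_i, P_i\rangle = 0$ for the curvature sphere maps $[K_i]$. Once this configuration is found, Theorem \ref{thm:3.5.6} immediately produces the desired Lie equivalence with the Legendre lift of an isoparametric hypersurface in $S^n$.

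The cases $g=1$ and $g=2$ are already in hand: a compact proper Dupin hypersurface with $g=1$ is totally umbilic and hence a metric sphere, while for $g=2$ it is a cyclide of Dupin and Theorem \ref{thm:4.3.1} together with Remark \ref{rem:10.1} gives the result. For $g=3$, I would invoke Theorem \ref{thm:CCJ-2007} to conclude that $M$ is irreducible, and then attempt to show, via a moving-frames analysis of the structure equations of $\Lambda^{2n-1}$ relative to a Lie frame adapted to the three curvature spheres, that the three curvature sphere maps are each constant in directions transverse to their own principal distribution in a way that forces them to be orthogonal to three collinear points on a timelike line.

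For $g=4$ and $g=6$, the strategy would be the same in outline: apply Theorem \ref{thm:CCJ-2007} to get irreducibility, then use Pinkall's characterization (Theorem \ref{thm:4.2.9}) to conclude that no curvature sphere map $[K_i]$ lies in a codimension-two linear subspace of ${\bf P}^{n+2}$. With this nondegeneracy in hand, one would set up a Lie frame pinned to the $g$ curvature spheres, write down the exterior differential system imposed by the Dupin condition on the connection forms, and try to integrate to show that each curvature sphere map factors through a hyperplane section of $Q^{n+1}$ whose defining vectors are collinear on a common timelike axis. Global input from Thorbergsson's ball bundle decomposition of $S^n$ and from the Stolz and Grove-Halperin multiplicity restrictions could then be fed in to eliminate exotic local possibilities.

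The main obstacle is expected to be precisely the $g=4$ and $g=6$ cases. Here the Dupin structure equations become genuinely nonlinear, and the Lie frame has considerably more freedom than when $g\leq 3$, so simply imposing the Dupin condition does not pin the curvature spheres to a single timelike axis. My expectation is that without extra hypotheses (such as a constraint on the multiplicities $m_i$, or an assumption that forces some curvature sphere map to already lie in a nice linear section of $Q^{n+1}$), the moving-frames computation will leave parameters free, and that these free parameters are exactly where compact, non-isoparametric proper Dupin examples can hide. In other words, I anticipate that the argument will go through smoothly for $g\leq 3$, will go through for $g\in\{4,6\}$ only under extra multiplicity or reducibility assumptions, and that the conjecture as literally stated is likely to require substantial revision precisely in these higher-$g$ cases.
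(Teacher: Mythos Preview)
The statement you are attempting to prove is a \emph{conjecture}, and the paper does not prove it; on the contrary, the paper reports that it is \emph{false} as stated. Pinkall--Thorbergsson and Miyaoka--Ozawa constructed compact, connected proper Dupin hypersurfaces with $g=4$ (and, in the latter case, also $g=6$) principal curvatures that are \emph{not} Lie equivalent to any isoparametric hypersurface; the obstruction is that their Lie curvatures are not constant. So no argument along the lines you sketch can succeed for $g\in\{4,6\}$, because the conclusion is simply not true there.

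Your own closing paragraph essentially anticipates this: the ``free parameters'' you expect to remain in the moving-frames computation for $g=4,6$ are exactly where the counterexamples live. For $g\leq 3$ your outline matches what the paper says is known (umbilic spheres for $g=1$, the cyclide classification Theorem~\ref{thm:4.3.1} for $g=2$, Miyaoka's theorem for $g=3$). But the honest conclusion is not a proof of Conjecture~\ref{cecil-ryan}; it is the revised Conjecture~\ref{revised-conjecture}, which adds the hypothesis of constant Lie curvatures and remains open in general.
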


We now discuss the state of the conjecture for each of the values of $g$. The case $g = 1$ is simply the case of 
totally umbilic hypersurfaces, and $M$ is a great or small hypersphere in $S^n$.  
In the case $g=2$, Cecil and Ryan \cite{CR2} showed that $M$
is a cyclide of Dupin (see Section \ref{sec:10}), and thus it is M\"{o}bius equivalent to a standard product of spheres
\begin{displaymath}
S^p (r) \times S^{n-1-p} (s) \subset S^n (1) \subset {\bf R}^{n+1}, \quad r^2 + s^2 = 1,
\end{displaymath}
which is an isoparametric hypersurface.

In the case $g=3$, Miyaoka \cite{Mi1} proved that $M$ is Lie equivalent 
to an isoparametric hypersurface
(see also Cecil-Chi-Jensen \cite{CCJ2} for a different proof using the fact that
compactness implies irreducibility, i.e., Theorem \ref{thm:CCJ-2007}).
Earlier, Cartan \cite{Car3} had shown that an isoparametric hypersurface with $g=3$
principal curvatures is a tube over a standard
embedding of a projective plane ${\bf FP}^2$,
for ${\bf F} = {\bf R}, {\bf C}, {\bf H}$ (quaternions) or ${\bf O}$ (Cayley numbers) 
in $S^4, S^7, S^{13}$
and $S^{25}$, respectively. For ${\bf F} = {\bf R}$, a standard embedding is a spherical 
Veronese surface (see also \cite[pp. 151--155]{CR8}). 

All attempts to verify Conjecture \ref{cecil-ryan} in the cases $g=4$ and 6 were unsuccessful, however.  Finally, in 1988,
Pinkall and Thorbergsson \cite{PT1} and Miyaoka and Ozawa \cite{MO} 
gave two different methods for producing
counterexamples to Conjecture \ref{cecil-ryan}  with $g =4$ principal curvatures.  
The method of Miyaoka and Ozawa also yields
counterexamples to the conjecture in the case $g=6$.  

These examples were shown to be counterexamples 
to the conjecture by a consideration of their Lie curvatures, which were introduced by Miyaoka \cite{Mi3}.
Lie curvatures are cross-ratios of the principal curvatures taken four at a time, and they are equal to the cross-ratios
of the corresponding curvature spheres along a projective line by Theorems \ref{thm:3.5.1}
and  \ref{thm:3.5.2}.  Since Lie sphere transformations map curvature spheres to curvature spheres
by Theorem \ref{thm:3.4.3}, and 
they preserve cross-ratios of four points along a projective line (since they are projective transformations),
Lie curvatures are invariant under Lie sphere transformations.  Obviously, the Lie curvatures must
be constant for a Legendre submanifold that is Lie equivalent to the Legendre lift of an isoparametric
hypersurface in a sphere.

The examples of Pinkall and Thorbergsson  are obtained by 
taking certain deformations of the isoparametric hypersurfaces
of FKM-type constructed by  Ferus, Karcher and M\"{u}nzner \cite{FKM} using representations of Clifford algebras.  
Pinkall and Thorbergsson proved that their examples are not Lie
equivalent to an isoparametric hypersurface by showing that the Lie curvature does not have the
constant value $\psi = 1/2$, 
as required for a hypersurface with $g=4$ that is Lie equivalent 
to an isoparametric hypersurface (if the principal curvatures are appropriately ordered).  
Using their methods, one can also show directly
that the Lie curvature is not constant for their examples (see \cite[pp. 309--314]{CR8}).

The construction of counterexamples to  Conjecture \ref{cecil-ryan} due to Miyaoka and Ozawa \cite{MO} (see also
\cite[pp. 117--123]{Cec1}) is based on the Hopf fibration
$h:S^7 \rightarrow S^4$.  
Miyaoka and Ozawa show that if $W^3$ is a proper Dupin hypersurface in
$S^4$ with $g$ distinct principal curvatures, then
$M = h^{-1}(W^3)$ is a proper Dupin hypersurface in $S^7$ with $2g$ principal
curvatures.  Next they show that if a compact, connected 
hypersurface $W^3 \subset S^4$ is proper Dupin
but not isoparametric, then the Lie curvatures of
$h^{-1}(W^3)$ are not constant, and therefore $h^{-1}(W^3)$ is not Lie
equivalent to an isoparametric hypersurface in $S^7$.  For $g=2$ or 3, this gives
a compact proper Dupin hypersurface $h^{-1}(W^3)$ in $S^7$ with $g=4$ or 6, respectively,
that is not Lie equivalent to an isoparametric hypersurface.

As noted above, all of these hypersurfaces are shown to be
counterexamples to Conjecture \ref{cecil-ryan} by proving that they do not have
constant Lie curvatures.  This led to a revision of Conjecture \ref{cecil-ryan} by Cecil, Chi and Jensen
\cite[p. 52]{CCJ4} in 2007 that contains the additional assumption of constant Lie curvatures.
This revised conjecture
is still an open problem, although it  has been shown to be true in some cases,
which we will describe after stating the conjecture.

\begin{conjecture}
\label{revised-conjecture}
(Cecil-Chi-Jensen, 2007)
Every compact, connected proper Dupin hypersurface in $S^n$ with four or six principal curvatures
and constant Lie curvatures is Lie equivalent to an isoparametric hypersurface in $S^n$.
\end{conjecture}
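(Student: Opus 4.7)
The overall plan is to reduce to the projective/Lie setting and ultimately verify the criterion of Theorem \ref{thm:3.5.6}: it suffices to exhibit $g$ points $[P_1],\ldots,[P_g]$ on a common timelike line in ${\bf P}^{n+2}$ with $\langle K_i, P_i \rangle = 0$ for each $i$, where $[K_1],\ldots,[K_g]$ are the curvature sphere maps of the Legendre lift $\lambda: M^{n-1} \to \Lambda^{2n-1}$. The first step would be to invoke Theorem \ref{thm:CCJ-2007} to conclude that $\lambda$ is irreducible (since $g \geq 3$), so that by Theorem \ref{thm:4.2.9} no curvature sphere map $[K_i]$ is contained in any linear subspace of ${\bf P}^{n+2}$ of codimension two. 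This irreducibility hypothesis is what makes Lie geometric moving frame methods tractable, and it is the reason the conjecture is posed for the compact case.

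Next I would set up a Lie frame $\{Y_1,\ldots,Y_{n+3}\}$ along $M$ adapted to the curvature sphere decomposition from Corollary \ref{cor:3.4.5}: choose $Y_1$ and $Y_{n+3}$ spanning $\lambda$, and let the remaining frame vectors span the principal subspaces $T_1,\ldots,T_g$ and an auxiliary spacelike complement, as in the approach of Cecil--Chern \cite{CC2} and Cecil--Jensen \cite{CJ2}--\cite{CJ3}. Writing the curvature spheres as $[K_i] = [Y_1 + t_i Y_{n+3}]$ for smooth parameters $t_i$, the Maurer--Cartan equations of $O(n+1,2)$ together with the Dupin condition (the curvature sphere maps are constant along their principal foliations) yield a system of structure equations. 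The Lie curvatures $\Psi_{ijkl}$ are expressible as cross-ratios of the $t_i$, so constant Lie curvatures translate into algebraic relations among the $t_i$ and their derivatives. The hope is that, combined with the Dupin equations, these relations force each $t_i$ to be constant on $M^{n-1}$, and in fact force $[K_i]$ to be orthogonal to a fixed point $[P_i]$ on the timelike line spanned by $Y_1$ and $Y_{n+3}$ in the Lie frame, which is exactly the hypothesis of Theorem \ref{thm:3.5.6}.

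A parallel thread of the argument would use the Stolz and Grove--Halperin multiplicity restrictions (valid here because $M$ is compact and tautly embedded by Thorbergsson's theorem) to reduce to the finitely many multiplicity patterns that actually occur on an isoparametric hypersurface. For $g=4$ one has the patterns $(m_1,m_2,m_1,m_2)$ with $(m_1,m_2)$ on M\"unzner's list, and for $g=6$ only the patterns $(1,1,1,1,1,1)$ and $(2,2,2,2,2,2)$. These patterns provide additional symmetries among the structure functions that one can exploit, especially the equality $m_i = m_{i+2}$ coming from the topology of the focal submanifolds, which should make the integration of the structure equations feasible.

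The main obstacle is the step in which constant Lie curvature must be leveraged to force the $t_i$ to be constant (equivalently, to force the curvature spheres to lie in a 2-parameter family spanned by the fixed timelike 2-plane). For equal multiplicities and small $g$ this has been carried out in the literature cited in Section \ref{sec:12}, but in general one must extract global consequences from local differential identities, and in the $g=6$ case the sheer combinatorial complexity of the structure equations --- with fifteen cross-ratios among six curvature spheres and large connection matrices on the adapted principal subspaces --- is formidable. A secondary obstacle is that constant Lie curvature is only a numerical condition on cross-ratios, so one must still rule out the possibility that the $[K_i]$ trace out the correct cross-ratios along curves in $Q^{n+1}$ other than those lying in a fixed timelike 2-plane; ruling this out is where the compactness hypothesis, through irreducibility and tautness, is expected to do real work.
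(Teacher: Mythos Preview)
The statement you are attempting to prove is Conjecture~\ref{revised-conjecture}, which the paper explicitly describes as an \emph{open problem}: ``This revised conjecture is still an open problem, although it has been shown to be true in some cases.'' There is no proof in the paper to compare against, and no complete proof is known in the literature.

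Your proposal is not a proof but a description of the known strategy, together with an honest acknowledgment of where it stalls. The outline you give---pass to the Legendre lift, invoke irreducibility via Theorem~\ref{thm:CCJ-2007}, set up an adapted Lie frame, write the curvature spheres as $[Y_1 + t_i Y_{n+3}]$, and attempt to show the $t_i$ are constant so that Theorem~\ref{thm:3.5.6} applies---is precisely the method used by Cecil--Jensen \cite{CJ2}--\cite{CJ3} and Cecil--Chi--Jensen \cite{CCJ2} to obtain the partial result Theorem~\ref{CCJ}. The paper itself explains why this has not been pushed further: for $g=4$ with $m_2 = m_4 > 1$ the moving-frame system acquires sums over the multiplicity indices that have so far resisted analysis, and for $g=6$ only Miyaoka's conditional result \cite{Mi4} is available.

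The genuine gap in your proposal is therefore not a subtle error but the absence of any new idea at the critical step. You write that ``the hope is that \ldots\ these relations force each $t_i$ to be constant,'' and later identify this very step as ``the main obstacle.'' A proof must close that gap, not name it. If you intended this as a research plan rather than a proof, it is a reasonable summary of the state of the art; but as a proof of the conjecture it does not go beyond what is already recorded in Section~\ref{sec:12} of the paper.
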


We first note that in 1989, Miyaoka \cite{Mi3}--\cite{Mi4} showed that if some additional assumptions are made regarding the intersections of the leaves of the various principal foliations, then this revised conjecture is true in both cases $g=4$ and 6.  
Thus far, however, it has not been proven that Miyaoka's additional assumptions are satisfied in general.

Cecil, Chi and Jensen \cite{CCJ2} 
made progress on the revised conjecture in the case $g=4$ by using the
fact that compactness implies irreducibility for a proper Dupin hypersurface
with $g \geq 3$ (see Theorem \ref{thm:CCJ-2007}), 
and then working locally with irreducible proper hypersurfaces in the context 
of Lie sphere geometry.  

If we fix the order of the principal curvatures of $M$ to be,
\begin{equation}
\label{eq:pc-order}
\mu_1 < \mu_2 < \mu_3 < \mu_4,
\end{equation}
then there is only one Lie curvature,
\begin{equation}
\label{eq:lie-curv}
\psi = \frac{(\mu_1 -\mu_2)(\mu_4 - \mu_3)}{(\mu_1 -\mu_3)(\mu_4 - \mu_2)}.
\end{equation}

For an isoparametric
hypersurface with four principal curvatures ordered as in equation (\ref{eq:pc-order}),
M\"{u}nzner's results \cite{Mu}--\cite{Mu2} imply that
the Lie curvature $\psi = 1/2$, and
the multiplicities satisfy $m_1 = m_3$, $m_2 = m_4$.
Furthermore, if $M \subset S^n$ is a compact, connected proper Dupin hypersurface 
with $g=4$, then the multiplicities of the principal curvatures must be the same as those of an isoparametric hypersurface by the work of Stolz \cite{Stolz}, so they satisfy $m_1 = m_3$, $m_2 = m_4$.

Cecil-Chi-Jensen \cite{CCJ2} proved the following 
local classification of irreducible proper Dupin hypersurfaces with four principal curvatures and constant Lie curvature 
$\psi = 1/2$. In the case where all the multiplicities equal one, this theorem was first proven
by Cecil and Jensen \cite{CJ3}.

\begin{theorem}
\label{CCJ} 
(Cecil-Chi-Jensen, 2007)
Let $M \subset S^n$ be a connected irreducible proper Dupin hypersurface with four principal curvatures 
ordered as in equation (\ref{eq:pc-order}) having multiplicities,
\begin{equation}
\label{eq:restricted}
m_1 = m_3 \geq 1, \quad m_2 = m_4 =1,
\end{equation}
and constant Lie curvature $\psi = 1/2$. Then $M$ is Lie equivalent to an
isoparametric hypersurface in $S^n$.
\end{theorem}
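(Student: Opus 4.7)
The plan is to use the method of moving frames in Lie sphere geometry, reducing the Lie frame bundle step by step until one can exhibit four points on a common timelike line orthogonal to the four curvature sphere maps, at which point Theorem \ref{thm:3.5.6} produces the Lie equivalence to an isoparametric hypersurface in $S^n$.

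First I would pass to the Legendre lift $\lambda:M\rightarrow\Lambda^{2n-1}$ and work on the bundle of Lie frames $\{Y_1,\ldots,Y_{n+3}\}$ of ${\bf R}^{n+3}_2$ adapted to $\lambda$, chosen so that four distinguished frame vectors span the $2$-plane over each point $x$ representing the line $\lambda(x)$, with $[K_1],[K_2],[K_3],[K_4]$ realized as fixed projective combinations of these four vectors. The structure of $O(n+1,2)$ then gives Maurer--Cartan forms $\omega_A{}^B$ satisfying $dY_A=\sum_B\omega_A{}^B Y_B$ and the integrability equations $d\omega_A{}^B=-\sum_C\omega_A{}^C\wedge\omega_C{}^B$. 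Decomposing the cotangent space of $M$ along the four principal distributions $T_1,T_2,T_3,T_4$ of respective dimensions $m_1,1,m_3,1$ (with $m_1=m_3$), I would split the forms $\omega_A{}^B$ into blocks indexed by these foliations.

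Second, I would translate the Dupin hypothesis into the vanishing, along $T_i$, of the derivatives of $[K_i]$ modulo $\lambda(x)$; this forces a large number of connection forms to vanish, and combined with the constant Lie curvature condition $\psi=1/2$ it yields an algebraic relation among the remaining components of the frame. At this stage the irreducibility hypothesis enters through Theorem \ref{thm:4.2.9}: no $[K_i]$ is contained in a codimension-two linear subspace of ${\bf P}^{n+2}$, so certain forms that would otherwise be permitted to vanish must in fact be nonzero, which eliminates degenerate branches of the frame reduction and guarantees that the reduction can be completed on all of $M$.

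Third, I would iteratively differentiate these identities, using Cartan's lemma on each leaf of each principal foliation and exploiting the asymmetry $m_2=m_4=1$ (so that one-dimensional leaves propagate constants trivially, while the higher-dimensional leaves of $T_1,T_3$ allow one to average). The output of this computation is that the maps $[K_1],[K_3]$ are everywhere polar to two fixed points $[P_1],[P_3]$ of ${\bf P}^{n+2}$, and likewise $[K_2],[K_4]$ are polar to two fixed points $[P_2],[P_4]$, with all four $[P_i]$ lying on a common timelike line $\ell\subset{\bf P}^{n+2}$. Theorem \ref{thm:3.5.6} then delivers the desired Lie equivalence to an isoparametric hypersurface in $S^n$.

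The main obstacle will be the third step. The frame reduction must produce the single timelike line $\ell$ simultaneously for all four curvature spheres, and the fact that $\psi=1/2$ must be used in the form of the cross-ratio of the four projective points $[K_i]$ along $\lambda(x)$ in order to determine the position of $[P_2],[P_4]$ from that of $[P_1],[P_3]$. Carrying out the differential-algebraic bookkeeping to rule out the possibility that $[P_1],[P_3]$ lie on one timelike line while $[P_2],[P_4]$ lie on a different one, and showing that the lines coincide, is the delicate combinatorial core of the argument; this is where the constant-Lie-curvature hypothesis and the multiplicity restriction $m_2=m_4=1$ enter in an essential way, since with a single one-dimensional principal direction for each of $T_2$ and $T_4$ the differential identities have just enough rigidity to force the alignment, whereas a more symmetric multiplicity pattern would require different techniques.
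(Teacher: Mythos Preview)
The paper itself does not give a proof of this theorem; it only states the result (attributing it to \cite{CCJ2}) and remarks that the key ingredients are Pinkall's reducibility criterion (Theorem~\ref{thm:4.2.9}) and the isoparametric criterion (Theorem~\ref{thm:3.5.6}), and later notes that ``the local proof of Theorem~\ref{CCJ} of Cecil, Chi and Jensen \cite{CCJ2} uses the method of moving frames, and it involves a large system of equations.''

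Your outline is consistent with these remarks and with the actual strategy of \cite{CCJ2}: a moving-frames computation in the Lie frame bundle over the Legendre lift, in which the proper Dupin and constant Lie curvature hypotheses drive successive frame reductions, irreducibility (via Theorem~\ref{thm:4.2.9}) rules out degenerate branches, and the endpoint is the existence of four fixed points $[P_1],\ldots,[P_4]$ on a timelike line with $\langle K_i,P_i\rangle=0$, so that Theorem~\ref{thm:3.5.6} applies. So at the level of approach there is nothing to correct. What you have written, however, is a plan rather than a proof: the substance of \cite{CCJ2} lies precisely in carrying out the ``differential-algebraic bookkeeping'' you allude to in your third and fourth paragraphs, and that computation is long (dozens of pages) and not at all routine. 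In particular, your description of how the multiplicity restriction $m_2=m_4=1$ and the value $\psi=1/2$ enter is correct in spirit but vague; the actual argument requires detailed case analysis of the structure equations, and the step from ``certain forms are nonzero by irreducibility'' to ``the four $[P_i]$ lie on a single timelike line'' is where essentially all of the work resides. As an outline of the method your proposal is accurate, but it would not be accepted as a proof without that computation.
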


Key elements in the proof of Theorem \ref{CCJ} are the Lie geometric criteria for reducibility (Theorem \ref{thm:4.2.9})
due to Pinkall \cite{P4}, and the criterion for Lie equivalence to
an isoparametric hypersurface (Theorem \ref{thm:3.5.6}).

By Theorem \ref{thm:CCJ-2007} above, we know that
compactness implies irreducibility for proper Dupin hypersurfaces with more than two principal curvatures.
Furthermore, Miyaoka \cite{Mi3} proved that if $\psi$ is constant on a compact proper Dupin hypersurface $M \subset S^n$ with $g=4$, then $\psi = 1/2$ on $M$, when
the principal curvatures are ordered as in equation (\ref{eq:pc-order}).  
As a consequence, we get the following corollary of
Theorem \ref{CCJ}.

\begin{corollary}
Let $M \subset S^n$ be a compact, connected proper Dupin hypersurface with 
four principal curvatures having multiplicities
\begin{displaymath}
m_1 = m_3 \geq 1, \quad m_2 = m_4 =1,
\end{displaymath}
and constant Lie curvature $\psi$.  Then $M$ is Lie equivalent to an isoparametric hypersurface in $S^n$.
\end{corollary}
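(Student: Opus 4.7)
The plan is to reduce this corollary to the preceding Theorem \ref{CCJ} by verifying its two standing hypotheses (irreducibility and the specific value $\psi = 1/2$) from the added compactness assumption. First I would invoke Theorem \ref{thm:CCJ-2007} of Cecil--Chi--Jensen: any compact, connected proper Dupin hypersurface with $g \geq 3$ distinct principal curvatures is automatically irreducible. Since our $M$ has $g = 4 \geq 3$, irreducibility is immediate.

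Next I would upgrade the hypothesis that $\psi$ is merely constant to the sharper conclusion $\psi = 1/2$. The paragraph just before the corollary records Miyaoka's result (from \cite{Mi3}) that if $\psi$ is constant on a compact proper Dupin hypersurface in $S^n$ with $g = 4$ and principal curvatures ordered as in equation (\ref{eq:pc-order}), then in fact $\psi = 1/2$. Applying this to $M$ pins the constant down to exactly the value handled by Theorem \ref{CCJ}.

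Finally, with irreducibility in hand, with $\psi = 1/2$ confirmed, and with the multiplicity conditions $m_1 = m_3 \geq 1$ and $m_2 = m_4 = 1$ carried over verbatim from the corollary's hypotheses, every hypothesis of Theorem \ref{CCJ} is met, and the conclusion---that $M$ is Lie equivalent to an isoparametric hypersurface in $S^n$---follows directly.

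There is essentially no genuine obstacle in the derivation of the corollary itself; it is a packaging of three established inputs. The only minor bookkeeping point is that the ordering convention (\ref{eq:pc-order}) used both in Miyaoka's theorem and in the statement of Theorem \ref{CCJ} must be fixed consistently, which is immediate once the four distinct principal curvatures of $M$ are arranged in increasing order at each point. All of the real difficulty lies in the inputs themselves---Theorem \ref{thm:CCJ-2007}, Miyaoka's identification of $\psi$, and above all the moving-frames proof of Theorem \ref{CCJ} that combines the Lie-geometric characterization of isoparametric lifts (Theorem \ref{thm:3.5.6}) with Pinkall's reducibility criterion (Theorem \ref{thm:4.2.9})---rather than in the short step from those inputs to the corollary.
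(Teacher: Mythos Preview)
Your proposal is correct and follows exactly the same route as the paper: the corollary is stated immediately after the paragraph that records precisely the two inputs you invoke---Theorem \ref{thm:CCJ-2007} for irreducibility and Miyaoka's result \cite{Mi3} forcing $\psi = 1/2$---and the paper gives no further argument beyond calling it ``a consequence'' of Theorem \ref{CCJ}.
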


The remaining open question is what happens if $m_2$ is also allowed to be greater than one, i.e.,
\begin{equation}
\label{eq:general}
m_1 = m_3 \geq 1,\quad  \ m_2 = m_4 \geq 1,
\end{equation}
and constant Lie curvature $\psi$.

Regarding this question, we note that
the local proof of Theorem \ref{CCJ} of Cecil, Chi and Jensen \cite{CCJ2}
uses the method of moving frames, and it involves a large system
of equations that contains certain sums if some $m_i$ is greater than one, but no 
corresponding sums if all $m_i$ equal one.  These sums make the calculations significantly more difficult, and
so far this method has not led to a proof in the general case (\ref{eq:general}).  Even so, this approach to
proving Conjecture \ref{revised-conjecture} in the case $g=4$
could possibly be successful with some additional insight regarding the
structure of the calculations involved.

Finally, Grove and Halperin \cite{GH} proved in 1987 that if $M \subset S^n$ is a compact proper Dupin hypersurface
with $g=6$ principal curvatures, then all the principal curvatures must have the same multiplicity $m$, and
$m = 1$ or 2.  This was shown earlier for isoparametric hypersurfaces with $g=6$ by Abresch \cite{Ab}.
Grove and Halperin also proved other topological 
results about compact proper Dupin hypersurfaces that support Conjecture \ref{revised-conjecture} in the case $g=6$.

As mentioned above,
Miyaoka \cite{Mi4} showed that if some additional assumptions are made regarding the intersections of the leaves of the various principal foliations, then Conjecture \ref{revised-conjecture} is true in the case $g=6$.  
However, it has not been proven that Miyaoka's additional assumptions are satisfied in general,
and so Conjecture \ref{revised-conjecture} remains as an open problem in the case $g=6$.

\noindent Thomas E. Cecil

\noindent Department of Mathematics and Computer Science

\noindent College of the Holy Cross, 

\noindent Worcester, MA 01610, U.S.A.

\noindent email: tcecil@holycross.edu

\end{document}